\newcommand{\RR}{\mathbb{R}}
\newcommand{\CC}{\mathbb{C}}
\newcommand{\ZZ}{\mathbb{Z}}
\newcommand{\eps}{\varepsilon}
\newcommand{\tubes}{\mathbb{T}}
\newtheorem{thm}{Theorem}
\newtheorem{lem}[thm]{Lemma}
\newtheorem{defn}[thm]{Definition}
\newtheorem{prop}[thm]{Proposition}
\newtheorem{cor}[thm]{Corollary}
\newtheorem{conj}[thm]{Conjecture}
\newtheorem*{twoEndsAss}{Assertion $\mathbf{TE}(d,a,b)$}
\newtheorem*{robustTransAss}{Assertion $\mathbf{RT}(d,a,b)$}
\newtheorem{rem}[thm]{Remark}
\newcommand{\dimBoundSymbolic}{3 + \frac{1}{600}(\sqrt{17665}-97)}
\newcommand{\dimBoundDecimal}{3.059}
\newcommand{\maxmlBoundSymbolic}{3 + \frac{2195737-13925\sqrt{17665}}{6959096}}
\newcommand{\maxmlBoundDecimal}{3.049}
\begin{document}
\pagenumbering{arabic}
\title{A Kakeya maximal function estimate in four dimensions using planebrushes}
\author{Nets Hawk Katz\thanks{California Institute of Technology, Pasadena CA, supported by NSF grant DMS 1565904, nets@caltech.edu.},\and Joshua Zahl\thanks{University of British Columbia, Vancouver BC, supported by an NSERC Discovery grant, jzahl@math.ubc.ca. MSC (2010): Primary 42B25.  Keywords: Kakeya problem}}

\maketitle

\begin{abstract}
We obtain an improved Kakeya maximal function estimate and improved Kakeya Hausdorff dimension estimate in $\mathbb{R}^4$ using a new geometric argument called the planebrush. A planebrush is a higher dimensional analogue of Wolff's hairbrush, which gives effective control on the size of Besicovitch sets when the lines through a typical point concentrate into a plane. When Besicovitch sets do not have this property, the existing trilinear estimates of Guth-Zahl can be used to bound the size of a Besicovitch set. In particular, we establish a maximal function estimate in $\mathbb{R}^4$ at dimension $\maxmlBoundDecimal$, and we prove that every Besicovitch set in $\mathbb{R}^4$ must have Hausdorff dimension at least $\dimBoundDecimal.$ 
\end{abstract}

\section{Introduction}
A Besicovitch set is a compact subset of $\mathbb{R}^n$ that contains a unit line segment pointing in every direction. The Kakeya conjecture asserts that every Besicovitch set in $\mathbb{R}^n$ must have Hausdorff dimension $n$. When $n=2$ the conjecture was resolved by Davies \cite{Davies}, while in three and higher dimensions the conjecture remains open. Additional background on the Kakeya problem can be found in the surveys \cite{KT, W2}.

The Kakeya maximal function conjecture is a slightly stronger and more technical version of the Kakeya conjecture which concerns the volume of unions of long thin ``tubes'' pointing in different directions. Stated precisely, the conjecture is as follows

\begin{conj}[Kakeya maximal function conjecture]\label{KakeyaMaximalFnConjecture}
Let $\tubes$ be a set of $\delta$-tubes in $\RR^n$ that point in $\delta$-separated directions. Then for each $d\leq n$ and each $\eps>0$, there exists a constant $C_{\eps}$ so that
\begin{equation}\label{maxmlFnBdConj}
\Big\Vert \sum_{T\in\tubes}\chi_T \Big\Vert_{d/(d-1)} \leq C_{\eps} \Big(\frac{1}{\delta}\Big)^{n/d-1+\eps}.
\end{equation}
\end{conj}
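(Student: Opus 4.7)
The plan is to attack the full conjecture by combining the two complementary tools the paper develops: the planebrush argument, designed for configurations where tubes concentrate into hyperplanes, and the Guth--Zahl multilinear Kakeya estimates, which handle robustly transverse configurations. The idea is to set up a dichotomy at each scale and iterate downward in dimension.

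First I would carry out the standard reductions. Dualizing the $L^{d/(d-1)}$ norm to level sets, (\ref{maxmlFnBdConj}) is equivalent to bounding $|\{x : \sum_T \chi_T(x)\geq\lambda\}|$ for each dyadic $\lambda$, and by pigeonholing one may assume every $T\in\tubes$ meets this level set in a fixed proportion of its length. Using the two-ends reduction together with Wolff's hairbrush-type slicing, I would reduce further to a configuration in which, through each point of a large shading of each tube, many other tubes pass in $\delta$-separated directions, with no concentration near either endpoint. This produces a geometric datum whose multiplicity is the quantity we must bound.

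Next, the main dichotomy is planeness. At a typical point $x$ of high multiplicity, either many of the tubes through $x$ lie within a $\delta^\alpha$-thickening of a common hyperplane (the \emph{planey} case) or they are robustly transverse. In the transverse case, iterating the multilinear Kakeya theorem of Bennett--Carbery--Tao (in the form used by Guth--Zahl) gives the sharp bound directly. In the planey case I would apply the planebrush: summing the contributions of all tubes meeting a fixed slab organized around a central tube, and exploiting the fact that planebrush configurations lose one full dimension, the problem is reduced to an $(n-1)$-dimensional Kakeya estimate with weights, to be handled by induction on $n$.

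The hard part, and the reason the full conjecture remains open, is closing the inductive loop. The tubes produced inside a planebrush slab are no longer $\delta$-separated as an $(n-1)$-dimensional family: their directions live in a slab of thickness $\delta^\alpha$, so the direct inductive application fails. To overcome this I would combine the planebrush with a polynomial-partitioning decomposition that separates distinct slab clusters, and then attempt to run an $X$-ray type estimate that records the ambient $\delta$-separation of the original directions as a weight on the lower-dimensional problem. The truly delicate step, which I expect to be the main obstacle, is coordinating the scale parameter $\alpha$ across all levels of the recursion so that the $\delta^{\eps}$ losses from induction on scales, the losses from passing between planey and transverse regimes, and the weight losses from the slab reduction combine into a bound of the form $C_\eps \delta^{-(n/d-1+\eps)}$ rather than a strictly worse exponent. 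Absorbing all of these losses into the single constant $C_\eps$ is the central difficulty; the arithmetic on the optimization in the paper's special case ($n=4$, exponent $\dimBoundDecimal$) suggests that new input is still required before this strategy can yield the full conjecture.
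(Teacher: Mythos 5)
The statement you are asked to prove is Conjecture \ref{KakeyaMaximalFnConjecture}, which the paper does not prove and which remains open for all $n\geq 3$; the paper only establishes the partial result of Theorem \ref{maximalFunctionEstimateBound} ($d=\dimBoundSymbolic$ in $\RR^4$). Your proposal does not close this gap, and indeed your final paragraph concedes as much, so there is no complete proof here to compare against anything.

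The concrete failures are worth naming. First, your claim that in the robustly transverse case ``iterating the multilinear Kakeya theorem \dots gives the sharp bound directly'' is false: the passage from multilinear to linear Kakeya estimates is itself a major open obstruction, and the trilinear estimate actually available (Theorem \ref{GZTrilinearProp} and Corollary \ref{trilinearVolumeBound}) only yields a volume bound of the shape $\lambda_Y^{13/4}\delta^{3/4}\theta(\delta^3|\tubes|)^{1/4}$, i.e.\ information at dimension roughly $13/4$ under a trilinear hypothesis, not the conjectured exponent $n/d-1$ for $d=n$. Second, the planebrush does not ``lose one full dimension'' and reduce to an $(n-1)$-dimensional Kakeya estimate by induction on $n$; what it actually produces (Proposition \ref{volumeBdPlainyTubes}) is a specific volume gain of the form $\lambda_Y^{4/3}\theta^{2/3}(\theta^3|\Omega|)^{1/3}(\theta^3|\tubes|)^{2/3}$, which must then be interpolated against the hairbrush, X-ray, and trilinear bounds in Lemma \ref{boundWithCorrectTubesDependence}; the fixed point of that iteration is $\alpha=\frac{1}{128}(257-\sqrt{17665})$, giving only $d\approx\dimBoundDecimal$, not $d=4$. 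Your proposed fixes (polynomial partitioning of slab clusters, weighted X-ray estimates on the lower-dimensional problem, coordinating the scale $\alpha$ across the recursion) are not carried out and are not known to work; as written they are a restatement of the difficulty rather than a resolution of it. The honest conclusion is that the statement is a conjecture, the paper treats it as such, and your argument establishes at most the same kind of partial progress the paper already contains.
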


For a given value of $n$ and $d$, the bound \eqref{maxmlFnBdConj} implies that every Besicovitch set in $\RR^n$ must have Hausdorff dimension at least $d$. In particular, Conjecture \ref{KakeyaMaximalFnConjecture} implies the Kakeya conjecture. 

In this paper, we will make some partial progress towards to Kakeya maximal function conjecture in $\RR^4$. Specifically, we prove the following.

\begin{thm}\label{maximalFunctionEstimateBound}
Let $\tubes$ be a set of $\delta$-tubes in $\RR^4$ that point in $\delta$-separated directions. Then for each $\eps>0$, there exists a constant $C_{\eps}$ so that
\begin{equation}\label{maxmlFnBd}
\Big\Vert \sum_{T\in\tubes}\chi_T \Big\Vert_{d/(d-1)} \leq C_{\eps} \Big(\frac{1}{\delta}\Big)^{4/d-1+\eps},
\end{equation}
where $d=\maxmlBoundSymbolic\geq \maxmlBoundDecimal$. Furthermore,  every Besicovitch set in $\RR^4$ has Hausdorff dimension at least $\dimBoundSymbolic\geq \dimBoundDecimal$.
\end{thm}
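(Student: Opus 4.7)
The plan is to argue by contradiction: assuming \eqref{maxmlFnBd} fails at dimension $d$, extract a rich incidence configuration and then control it by a geometric dichotomy on tube directions. First I would run the standard Wolff--Bourgain pigeonholing together with a two-ends reduction, replacing the maximal function inequality by a counting problem: there exist a refined tube family $\tubes$, a point set $X\subset\RR^4$, and parameters $\mu,\lambda$ so that every $x\in X$ meets $\sim\mu$ tubes of $\tubes$, each tube meets $\sim\lambda$ points of $X$, and the tube--point incidences are spread out along each tube (two-ends). It then suffices to lower bound $\bigl|\bigcup_{T\in\tubes} T\bigr|$ sharply enough to contradict the assumption.

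Next, I would split each point $x\in X$ into two classes according to the directions of the $\sim\mu$ tubes through it: either a large fraction of the directions concentrate into a $2$-plane (the \emph{planar} case) or one can find a robustly transverse triple spanning a $3$-frame (the \emph{transverse} case). Pigeonholing, assume one alternative dominates on a positive fraction of $X$. The transverse case is handled directly by the Guth--Zahl trilinear Kakeya estimate applied at the appropriate scale, producing the assertion $\mathbf{RT}(d,a,b)$ with parameters depending on the split.

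The planar case requires the new planebrush tool. I would first upgrade the local planes-of-concentration at individual points to a single global $2$-plane $\Pi$ meeting many points of $X$ nearly tangentially, via a second two-ends/pigeonhole step on the space of $2$-planes. The planebrush is the resulting collection of tubes almost-tangent to $\Pi$ passing through many points of $X\cap\Pi$. Following the spirit of Wolff's hairbrush, where a central line is used to organize tubes into a near-disjoint bush, I would organize the planebrush tubes into thin angular slabs around $\Pi$ and exploit that tubes in a common slab through distinct points of $\Pi$ must have essentially distinct $\Pi$-transverse directions, allowing a Córdoba-type orthogonality estimate. Summed over slabs, this produces the volume lower bound encoding $\mathbf{TE}(d,a,b)$.

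Combining the two cases and optimizing over the planar/transverse split parameter produces a quadratic equation in $d$ whose relevant root is $\dimBoundSymbolic\approx\dimBoundDecimal$. The main obstacle will be the planebrush bound itself: one must extract a genuine $4$-dimensional volume gain from tubes whose individual slabs look only $3$-dimensional, and the resulting exponent must be sharp enough to meet the trilinear output exactly at the chosen $d$. The subtle point is that the naive strategy of applying Wolff's $3$-dimensional hairbrush slice-by-slice inside each slab loses too much; the argument must couple the slabs through their shared incidence with $\Pi$, and this coupling is precisely what dictates the final dimension bound.
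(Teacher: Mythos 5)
Your high-level dichotomy (planar vs.\ transverse directions at a typical point, trilinear Guth--Zahl for the transverse case, a new ``planebrush'' for the planar case) is the right frame, and you correctly identify the two-ends and robust-transversality reductions as the necessary preprocessing. But there are several genuine gaps between what you sketch and what actually makes the argument close.

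First, the planebrush mechanism you describe---upgrading local planes at each point to a single global $2$-plane $\Pi$, then slicing into angular slabs around $\Pi$ and running a C\'ordoba-type orthogonality estimate slab-by-slab---is not what the argument does and, as you yourself note, the naive slab-by-slab version loses too much. The actual planebrush does \emph{not} pass through one global $\Pi$. Instead, every $\delta$-cube $Q$ carries its own plane $\Pi(Q)$ (the plane containing the directions of tubes through $Q$), every tube $T$ carries a representative plane $\Pi(T)$ and a $3$-plane $\Sigma(T)$, and the volume lower bound comes from a \emph{counting argument on quintuples} $(Q,T,Q',T',Q'')$: one shows that the angle $\rho$ between the two tubes $T'$ and a competing $T_1'$ meeting $T$ at the same two cubes controls $\angle(\Sigma(T'),\Pi(Q''))$, which forces $T,T'$ and the relevant cubes to live in a common ``prism'' $W$ (the intersection of a thickened plane and a thickened $3$-plane), and then one counts the number of distinct cubes $Q''$ reachable within each prism. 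That quintuple count, not a slice-wise C\'ordoba estimate, is what couples the slabs and produces a genuine four-dimensional volume gain.

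Second, the argument is inherently two-scale: one extracts an intermediate scale $\theta$ (the scale at which directions through a $\delta$-cube are nearly coplanar and directions through a $\theta$-cube become trilinear), coarsens $\tubes$ to $\theta$-tubes, applies the planebrush at scale $\theta$, and then controls the multiplicity of $\delta$-tubes \emph{inside} a fixed $\theta$-tube by recursively applying the very estimate being proved. Your single-scale sketch has no place for this recursion, and without it the exponents do not fall out. Relatedly, the paper combines the planebrush estimate with \L{}aba--Tao's X-ray estimate (as a multiplicity bound on the $\theta$-tubes) and with Wolff's hairbrush estimate $\mathbf{RT}(3,2,1/2)$ via interpolation in $\lambda$ and $|\tubes|$; neither of these ingredients appears in your outline, and they are needed to get the powers of $\lambda$ and $(\delta^3|\tubes|)$ into a form that is stable under the two-ends/robust-transversality reductions. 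Finally, the bound $d=\dimBoundSymbolic$ is not obtained by ``optimizing a split parameter'' in a one-shot argument: it is the fixed point of an explicit iteration $\alpha_k\mapsto\alpha_{k+1}$ produced by feeding the recursive multiplicity bound back into itself (Lemma \ref{boundWithCorrectTubesDependence} and Lemma \ref{TwoEndsBoundCorrectTubeDependence}); the ``quadratic equation'' you anticipate is the fixed-point equation of that iteration, not an optimization of a single case split. To repair your proposal you would need to replace the global-plane/slab picture with the quintuple-and-prism count, introduce the $\delta$--$\theta$ two-scale decomposition with the self-referential bound on $\mu_{\operatorname{fine}}$, and set up the bootstrap explicitly.
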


The proof of Theorem \ref{maximalFunctionEstimateBound} involves a new geometric ingredient, which we call the ``planebrush'' argument. Recall that Wolff's ``hairbrush'' argument from \cite{W} hinges on the following geometric observation: If $T$ is a $\delta$-tube in $\RR^n$, then $\RR^n$ can be written as a union of $\sim\delta^{1-n}$ sets, each of which is the $\delta$-neighborhood of a plane containing the line coaxial with $T$; we call these sets thickened planes. Morally speaking, these thickened planes are disjoint, and if $T^\prime$ is a tube intersecting $T$ (say at angle comparable to 1), then $T^\prime$ must be contained in one of these thickened planes. C\'ordoba's two-dimensional Kakeya argument from \cite{Cor} can then be applied to the tubes contained in each thickened plane; the conclusion is that the tubes inside each thickened plane are essentially disjoint. Thus the set of tubes intersecting a fixed tube $T$ are essentially disjoint. If few tubes intersect a typical tube $T$ then the Besicovitch set must have large volume. On the other hand, if many tubes intersect a typical tube $T$, then the Besicovitch set must also have large volume, since the tubes intersecting $T$ are all disjoint and contained in the Besicovitch set. When this argument is made precise, it shows that every Besicovitch set in $\RR^n$ must have Hausdorff dimension at least $\frac{n+2}{2}$.

The planebrush argument employs a similar idea, except instead of dividing $\RR^n$ into thickened planes, all of which contain a common tube, we will divide $\RR^n$ into thickened 3-planes, all of which contain a common thickened plane. The advantage of this approach is that a larger number of tubes are contained in the resulting collection of thickened 3-planes. A disadvantage of this approach is that the Kakeya problem in $\RR^3$ remains open, so we do not have a nice analogue of C\'ordoba's argument. Despite this shortcoming, the planebrush argument can still yield superior bounds compared to Wolff's argument in certain special cases.

Theorem \ref{maximalFunctionEstimateBound} improves upon the earlier result of Guth-Zahl, Zahl, and Katz-Rogers \cite{GZ, Z, KR}, which established \eqref{maxmlFnBdConj} for $d=3+1/40=3.025$\footnote{Guth-Zahl \cite{GZ} originally claimed \eqref{maxmlFnBdConj} for $d=3+1/28$, but that proof contained an arithmetic error that has since been corrected; the correct bound established by that argument is $3+1/40.$}.

\subsection{Thanks}
The authors would like to thank Keith Rogers, Mukul Rai Choudhuri, Mingfeng Chen, and the anonymous referees for comments and corrections on an earlier version of this manuscript.

\section{Technical preliminaries and tools}

\subsection{Tubes, shadings, and refinements}
\begin{defn}
A $\delta$-tube is the $\delta$-neighborhood of a unit line segment in $\RR^4$. Every $\delta$-tube has measure $\sim\delta^3$. We say that two $\delta$-tubes are essentially identical if the 2-fold dilate of one of the tubes contains the other. If two tubes are not essentially identical then we say they are essentially distinct.
\end{defn}

\begin{defn}
A $\delta$-cube is a set of the form $Q=[0,\delta)^4+v$, where $v\in (\delta\ZZ)^4$. Observe that the set of all $\delta$-cubes tile $\RR^4$. The symbol $Q$ will always refer to a $\delta$-cube, so for example the expression $\sum_{Q\subset A}f(Q)$ will refer to a sum taken over all $\delta$-cubes contained in the set $A$.
\end{defn}

\begin{defn}\label{shading}
Let $T$ be a $\delta$-tube. A shading of $T$ is a set $Y(T)$ that is a union of $\delta$-cubes, each of which intersect $T$. Let $\tubes$ be a set of $\delta$-tubes; for each $T\in\tubes$, let $Y(T)$ be a shading of $T$. We refer to the pair $(\tubes,Y)$ as a set of tubes and their associated shading.
\end{defn}

For each $\delta$-cube $Q$, define 
\begin{equation*}
\tubes_Y(Q)=\{T\in\tubes\colon Q\subset Y(T)\}.
\end{equation*}
We will sometimes write this as $\tubes(Q)$ if the shading $Y$ is apparent from context. For each $T\in\tubes$, define 
\begin{equation*}
H_Y(T)=\bigcup_{Q\subset Y(T)}\tubes_Y(Q).
\end{equation*}
This set is called the hairbrush of $T$.

If $(\tubes,Y)$ is a set of tubes and their associated shading, define $\mathcal{Q}(Y)$ to be the set of $\delta$-cubes that are contained in at least one shading $Y(T)$ for some $T\in\tubes$. 

If $(\tubes,Y)$ is a set of tubes and their associated shading, define 
\begin{equation*}
\lambda_Y = \frac{1}{|\tubes|}\sum_{T\in\tubes}\frac{|Y(T)|}{|T|},
\end{equation*}
and define
\begin{equation*}
\mu_Y = \frac{1}{|\mathcal{Q}(Y)|}\sum_{Q\in\mathcal{Q}(Y)}|\tubes_Y(Q)|.
\end{equation*}
$\lambda_Y$ is the average shading density of a tube from $(\tubes,Y)$, and $\mu_Y$ is the average multiplicity of a cube from $\mathcal{Q}(Y)$ (i.e. the average number of tubes from $(\tubes,Y)$ whose shading contains $Q$).

\begin{defn}[Refinements]
Let $(\tubes,Y)$ be a set of tubes and their associated shading and let $t>0$. We say that a pair $(\tubes^\prime,Y^\prime)$ is a \emph{$t$-refinement} of $(\tubes,Y)$ if $\tubes^\prime\subset\tubes,$ $Y^\prime(T)\subset Y(T)$ for each $T\in\tubes^\prime$, and
\begin{equation*}
\sum_{T\in\tubes^\prime}|Y^\prime(T)|\geq t\sum_{T\in\tubes}|Y(T)|.
\end{equation*}

For example, if $(\tubes,Y)$ is a set of tubes and their associated shading, then there exists a $\sim|\log\delta|^{-1}$-refinement $(\tubes^\prime,Y^\prime)$ so that $\mu_{Y^\prime}\leq |\tubes^\prime_{Y^\prime}(Q)|\leq2\mu_{Y^\prime}$ for all $Q\in\mathcal{Q}(Y^\prime)$. Since $|\log\delta|^{-1}$-refinements will frequently occur in our proof, sometimes we will abuse notation and simply refer to them as refinements. 
\end{defn}

Observe that if $(\tubes^\prime,Y^\prime)$ is a $t$-refinement of $(\tubes,Y)$, then $\lambda_{Y^\prime}\geq t \lambda_Y$. Of course it is possible that $\lambda_{Y^\prime}$ might be much larger than $\lambda_Y$.

\subsection{Replacing sets with large homogeneous subsets}
The following lemma is an abstract formulation of the ``two-ends'' reduction that is frequently used when studying the Kakeya problem. In the following lemmas, we will apply this abstract version to several concrete situations.

\begin{lem}\label{largeHomogeneousPiece}
Let $(X,d)$ be a finite metric space of diameter at most one, and let $\eps>0$. Then there exists a point $x_0\in X$ and a radius $r_0>0$ so that the following holds.
\begin{itemize}
\item $|X\cap B(x_0,r_0)|\geq \frac{1}{2}|X|^{1-\eps}.$
\item For every $x\in X$ and every $r\geq 1/|X|$, 
\begin{equation}\label{nonConcentrationBalls}
|B(x,r) \cap X \cap B(x_0,r_0)| \leq 2(r/r_0)^{\eps}|X\cap B(x_0,r_0)|.
\end{equation}
\end{itemize}
\end{lem}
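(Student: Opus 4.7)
I would prove this by a greedy iteration. Initialize $r_0 = 1$ (so that by the diameter bound $X \cap B(x_0,r_0) = X$) and $x_0 \in X$ arbitrary. At each stage, check whether the non-concentration bound \eqref{nonConcentrationBalls} holds at the current pair $(x_k, r_k)$, where $X_k := X \cap B(x_k,r_k)$. If it holds, stop. Otherwise there is a witness: some $x_{k+1} \in X$ and $r_{k+1} \geq 1/|X|$ satisfying
\[
|B(x_{k+1}, r_{k+1}) \cap X_k| > 2(r_{k+1}/r_k)^\eps |X_k|.
\]
Pass to $(x_{k+1}, r_{k+1})$ and repeat.

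Two observations drive the argument. First, the radii shrink geometrically: since the LHS above is at most $|X_k|$, any violation of non-concentration forces $2(r_{k+1}/r_k)^\eps < 1$, i.e.\ $r_{k+1} < 2^{-1/\eps} r_k$. Combined with the constraint $r_{k+1} \geq 1/|X|$, the iteration must halt after at most $K \leq \eps \log_2 |X|$ steps, at which point non-concentration holds by construction, verifying the second bullet. Second, each step grows the set enough to compensate: although the $X_k$ are \emph{not} literally nested (the centers range over all of $X$, not only $X_k$), we do have $X_{k+1} \supseteq B(x_{k+1}, r_{k+1}) \cap X_k$, which promotes the witness inequality to $|X_{k+1}| > 2(r_{k+1}/r_k)^\eps |X_k|$. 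Telescoping yields $|X_K| \geq 2^K r_K^\eps |X|$, and $r_K \geq 1/|X|$ then gives $|X_K| \geq 2^K |X|^{1-\eps} \geq \tfrac{1}{2}|X|^{1-\eps}$, verifying the first bullet.

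The only technical wrinkle I anticipate is precisely the non-nestedness of the candidate sets, which is why the iteration must be phrased so that $(x_{k+1}, r_{k+1})$ is selected via the failure of non-concentration relative to $X_k$, rather than relative to the ambient $X$; the containment $X_{k+1} \supseteq B(x_{k+1}, r_{k+1}) \cap X_k$ is exactly what lets the multiplicative growth of $|X_k|$ pass through the telescoping step. Beyond that, everything is bookkeeping: the geometric decay of $r_k$ is offset by the factor of two gained in $|X_k|$ at each iteration, so the terminal factor $2^K$ comfortably absorbs the possible loss $(r_K|X|)^{-\eps} \leq 1$ with room to spare.
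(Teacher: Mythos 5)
Your argument is correct, and the observation that $X_{k+1}\supseteq B(x_{k+1},r_{k+1})\cap X_k$ (despite the sets $X_k$ not being nested) is exactly what keeps the telescoping legitimate. The paper proves the lemma instead by a one-shot extremal argument: it considers $F(x,r)=r^{-\eps}|X\cap B(x,r)|$ over $x\in X$ and $r\geq 1/|X|$, picks $(x_0,r_0)$ within a factor of $2$ of the supremum (which is finite since $F\leq |X|^{1+\eps}$), and reads off both bullets directly --- non-concentration from near-optimality of $(x_0,r_0)$, and the size bound from comparing $F(x_0,r_0)$ with $F$ at $r=\operatorname{diam}(X)\leq 1$. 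Your iteration is, in effect, a constructive hill-climb on the same functional: each step strictly more than doubles $r_k^{-\eps}|X_k|$ while shrinking $r_k$ by the fixed factor $2^{-1/\eps}$, and termination is forced because $r_k$ is trapped between $1/|X|$ and $2^{-k/\eps}$. So the two proofs rest on the same underlying quantity, but yours makes the mass-increment structure explicit (at the cost of the non-nestedness subtlety and the termination bookkeeping), whereas the paper's variational form is shorter by taking the supremum all at once. A small dividend of the iterative version is that it delivers $|X_K|\geq 2^K|X|^{1-\eps}\geq |X|^{1-\eps}$, dispensing with the factor of $\tfrac12$ in the first bullet.
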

 
\begin{proof}
We will closely follow Tao's argument from \cite{T2}. Consider the quantity
\begin{equation*}
\sup_{B(x,r)}r^{-\eps}|X\cap B(x,r)|,
\end{equation*}
where the supremum is taken over all balls $B(x,r)$ with $x\in X$ and $r\geq 1/|X|$. Since $X$ is finite and $r\geq 1/|X|$, we have $r^{-\eps}|X\cap B(x,r)|\leq |X|^{1+\eps}$, so the above supremum is finite. In particular, there exists a ball $B(x_0,r_0)$ with $x_0\in X$ and $r_0\geq|X|^{-1}$ which comes within a factor of 2 of achieving the supremum. With this choice of ball, we have $r_0^{-\eps}|X\cap B(x_0,r_0)|\geq\frac{1}{2}(\operatorname{diam}(X))^{-\eps}|X|$, and thus $|X\cap B(x_0,r_0)|\geq r_0^{\eps}|X|\geq |X|^{1-\eps}$. 

Next, observe that for every $x\in X$ and every $r\geq 1/|X|$, we have
\begin{equation*}
r^{-\eps}|B(x,r) \cap X\cap B(x_0,r_0)| \leq r^{-\eps}|X\cap B(x,r)|\leq 2r_0^{-\eps}|X\cap B(x_0,r_0)|,
\end{equation*}
which establishes \eqref{nonConcentrationBalls}.
\end{proof}

\begin{lem}\label{twoEndsBuildingBlock}
Let $(X_1,d_1),\ldots,(X_n,d_n)$ be finite metric spaces of diameter at most one, each of which have cardinality at most $N$. Fix $\eps>0$. Then there is a radius $r_0>0$, a set of indices $I\subset[n]$, and a set of points $\{x_i\}_{i\in I}$ so that
\begin{itemize}
\item $|I|\gtrsim n/\log^2 N$.
\item For every $i\in I$, $x_i\in X_i$.
\item For every $i\in I$, $|X_i\cap B(x_i,r_0)|\geq \frac{1}{4}|X_i|^{1-\eps}.$
\item For every $i\in I$ and for every $x\in X_i$ and every $r>|X_i|^{-1}$, 
\begin{equation*}
|B(x,r) \cap X \cap B(x_i,r_0)| \leq 8(r/r_0)^{\eps}|X_i\cap B(x_i,r_0)|.
\end{equation*}
\end{itemize}
\end{lem}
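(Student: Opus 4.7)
The plan is to apply Lemma~\ref{largeHomogeneousPiece} separately to each space $(X_i, d_i)$ and then unify the resulting localization radii by dyadic pigeonholing. Each individual radius will lie in $[1/N, 1]$, so there are $O(\log N)$ dyadic scales to consider, and some common scale must be shared by many indices.

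In more detail, I would first apply Lemma~\ref{largeHomogeneousPiece} to each $(X_i, d_i)$ with the given $\eps$, producing a point $x_i \in X_i$ and a radius $r_i \in [1/|X_i|, 1]$ satisfying $|X_i \cap B(x_i, r_i)| \geq \tfrac{1}{2}|X_i|^{1-\eps}$ together with the non-concentration bound
\[
|B(x, r) \cap X_i \cap B(x_i, r_i)| \leq 2(r/r_i)^{\eps}\, |X_i \cap B(x_i, r_i)|, \qquad x \in X_i,\ r \geq 1/|X_i|.
\]
Second, I would dyadic-pigeonhole on $r_i$: since $r_i \in [1/N, 1]$ there are $O(\log N)$ dyadic bins, so some bin contains $\gtrsim n/\log N \geq n/\log^2 N$ indices. (A second dyadic pigeonhole on $|X_i|$ can be carried out simultaneously if it is convenient for subsequent applications, at the cost of the extra logarithmic factor allowed by the statement.) Let $I$ be the retained index set, and let $r_0$ be the upper endpoint of the selected bin, so that $r_0/2 \leq r_i \leq r_0$ for every $i \in I$.

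Third, the first three bulleted conclusions are immediate: for $i \in I$ the point $x_i$ lies in $X_i$ by construction, and the inclusion $B(x_i, r_i) \subseteq B(x_i, r_0)$ yields $|X_i \cap B(x_i, r_0)| \geq |X_i \cap B(x_i, r_i)| \geq \tfrac{1}{4}|X_i|^{1-\eps}$.

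The main obstacle is verifying the fourth (non-concentration) condition, because the ball $B(x_i, r_0)$ appearing there is \emph{larger} than the ball $B(x_i, r_i)$ directly controlled by Lemma~\ref{largeHomogeneousPiece}. The workaround is to use the stronger intermediate inequality $|X_i \cap B(x, r)| \leq 2(r/r_i)^{\eps}\, |X_i \cap B(x_i, r_i)|$ that is actually established \emph{inside} the proof of that lemma, before the localizing intersection with $B(x_0, r_0)$ is inserted. Combining this estimate with $|X_i \cap B(x_i, r_i)| \leq |X_i \cap B(x_i, r_0)|$ and $(r_0/r_i)^{\eps} \leq 2^{\eps} \leq 2$ gives
\[
|B(x, r) \cap X_i \cap B(x_i, r_0)| \leq |X_i \cap B(x, r)| \leq 2^{1+\eps}(r/r_0)^{\eps}\, |X_i \cap B(x_i, r_0)| \leq 8(r/r_0)^{\eps}\, |X_i \cap B(x_i, r_0)|,
\]
which is the required bound, with the constant $8$ accounting for both the factor from the original lemma and the scale mismatch between $r_i$ and $r_0$.
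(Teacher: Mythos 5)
Your proof is correct and mirrors the paper's argument: apply Lemma~\ref{largeHomogeneousPiece} componentwise, then dyadically pigeonhole on the radii $r_i$ (the paper additionally pigeonholes on $|X_i|$, which accounts for the $\log^2 N$ but is inessential for the lemma as stated). Your observation that the fourth bullet does not follow directly from the stated conclusion \eqref{nonConcentrationBalls} at radius $r_i$ --- because $B(x_i,r_0)\supsetneq B(x_i,r_i)$ --- but instead needs the intermediate inequality $|X_i\cap B(x,r)|\leq 2(r/r_i)^{\eps}|X_i\cap B(x_i,r_i)|$ from the \emph{proof} of Lemma~\ref{largeHomogeneousPiece}, is a genuine subtlety the paper's one-paragraph proof leaves implicit, and your constant-tracking (giving the factor $2\cdot 2^{\eps}\leq 8$) closes it correctly.
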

\begin{proof}
For each $i=1,\ldots,n$, apply Lemma \ref{largeHomogeneousPiece} to $(X_i,d_i)$ with $\eps$ as above, and let $B(x_i,r_i)$ be the resulting ball. Since $\frac{1}{N}\leq r_i\leq 1$ for each index $i$, we can select a radius $r_0$, an integer $1\leq N_0\leq N$, and a set $I\subset[n]$ of cardinality $|I|\geq n/\log^2 n$ so that $\frac{1}{2}r_0\leq r_i\leq r_0$ and $N_0\leq |X_i|\leq 2N_0$ for each $i\in I$. 
\end{proof}
\subsection{The two-ends reduction}

\begin{defn}[Two-ends condition]\label{twoEndsDef}
Let $(\tubes,Y)$ be a set of $\delta$-tubes and their associated shading. We say that $(\tubes,Y)$ is $(\eps_1,C_1)$-two-ends if for all $T\in\tubes$ and all $\delta\leq r\leq 1$, we have
\begin{equation}\label{nonConcentrationInTube}
|\{Q\colon Q\subset Y(T)\cap B(x,r)\}|\leq r^{\eps_1}C_1\lambda_Y\delta^{-1}\quad\textrm{for all balls}\ B(x,r).
\end{equation}
\end{defn}

\begin{rem}
A virtue of Definition \ref{twoEndsDef} is that it is preserved under refinements: If $(\tubes,Y)$ is $(\eps_1,C_1)$-two-ends and if $(\tubes^\prime,Y^\prime)$ is a $t$-refinement of $(\tubes,Y)$, then $(\tubes^\prime,Y^\prime)$ is $(\eps_1,C_1/t)$-two-ends.
\end{rem}

The two-ends condition is valuable because collections of tubes satisfying the two-ends condition can be easier to manipulate. At the same time, Kakeya estimates about collections of tubes satisfying the two-ends condition can be upgraded to Kakeya estimates about general collections of tubes. This procedure is known as the two-ends reduction, and we will describe it below.

For $0\leq d\leq n$, $a>0$, and $0\leq b\leq 1$, define Assertion $\mathbf{TE}(d,a,b)$ to be the following statement:

\begin{twoEndsAss}
Let $\delta>0$ and let $(\tubes,Y)$ be a set of $\delta$-tubes in $\RR^4$ pointing in $\delta$-separated directions and their associated shading. Suppose that $(\tubes,Y)$ is $(\eps_1,100)$-two-ends. Then for each $\eps>0$,
\begin{equation}
\Big| \bigcup_{T\in\tubes}Y(T) \Big|\geq c\lambda_Y^{a}\delta^{4-d+\eps}(\delta^{3}|\tubes|)^b,
\end{equation}
where $c>0$ is a constant that depends only on $d,a,b,\eps,$ and $\eps_1$.
\end{twoEndsAss}

Note that if Assertion $\mathbf{TE}(d,a,b)$ is true, then Assertion $\mathbf{TE}(d,a^\prime,b)$ is also true for all $a^\prime\geq a$. The two-ends reduction says that Assertion $\mathbf{TE}(d,d,b)$ implies a Kakeya maximal function estimate at dimension $d$.
\begin{prop}[The two-ends reduction]\label{twoEndsConditionThm}
Suppose that $\mathbf{TE}(d,d,b)$ is true for some $1\leq d\leq 4$ and $0\leq b\leq 1$. Then for each $\eps>0$, there exists a constant $c_\eps$ so that the following holds. Let $(\tubes,Y)$ be a set of tubes pointing in $\delta$-separated directions and their associated shading. Then
\begin{equation}
\Big| \bigcup_{T\in\tubes}Y(T) \Big|\geq c_{\eps}\lambda_Y^{d}\delta^{4-d+\eps}(\delta^{3}|\tubes|)^b.
\end{equation}
\end{prop}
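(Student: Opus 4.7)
The strategy is the standard two-ends reduction: extract from $(\tubes,Y)$ a refined sub-shading which, after a rescaling, satisfies the two-ends hypothesis of $\mathbf{TE}(d,d,b)$, apply the assertion at the new scale, and then unwind.

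First, I would pigeonhole on the per-tube cube count $n_T := |\{Q\subset Y(T)\}|$ to pass to a subcollection $\tubes_1\subset\tubes$ on which $n_T$ is dyadically constant with common value $n$, so that $\lambda_{Y|_{\tubes_1}}\sim \delta n$. For each $T\in\tubes_1$, view $\{Q\subset Y(T)\}$ as a $1$-dimensional metric space via projection onto the axis of $T$, and apply Lemma \ref{twoEndsBuildingBlock} with a small parameter $\eps_1>0$ to be chosen later. This produces a common scale $r_0\in[\delta,1]$, a further refinement $\tubes_2\subset\tubes_1$, and centers $x_T\in T$ so that the local sub-shading $Y^*(T):=Y(T)\cap B(x_T,r_0)$ has cube count $n_T^*\gtrsim n^{1-\eps_1}$ and satisfies the non-concentration bound $|\{Q\subset Y^*(T)\cap B(x,r)\}|\leq 8(r/r_0)^{\eps_1} n_T^*$ for every ball with $r\geq\delta$.

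Second, dilate everything by $1/r_0$, so that the new scale is $\delta':=\delta/r_0$ and each rescaled sub-shading lies in a unit ball. The non-concentration bound becomes the $(\eps_1,O(1))$-two-ends condition of Definition \ref{twoEndsDef} at scale $\delta'$. Since the original directions are only $\delta$-separated while $\mathbf{TE}$ requires $\delta'$-separated directions, extract a $\delta'$-separated subset $\tilde\tubes\subset\tubes_2$ of size $|\tilde\tubes|\gtrsim r_0^{3}|\tubes_2|$ (possible since a $\delta'$-ball in $S^3$ contains at most $r_0^{-3}$ of the original $\delta$-separated directions), and let $\tilde Y(\tilde T)$ be the dilation of the corresponding $Y^*(T)$. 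Apply $\mathbf{TE}(d,d,b)$ to $(\tilde\tubes,\tilde Y)$.

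Finally, I would unwind. Because the map $y\mapsto y/r_0$ is a pure dilation (no per-tube translation), $|\bigcup\tilde Y(\tilde T)|=r_0^{-4}|\bigcup_{T\in\tilde\tubes} Y^*(T)|$, and hence $|\bigcup_T Y(T)|\geq r_0^4|\bigcup\tilde Y(\tilde T)|$. A direct computation gives $(\delta')^3|\tilde\tubes|\sim \delta^3|\tubes|$ (the $r_0^3$ from direction sparsification and the $r_0^{-3}$ from $(\delta')^3$ cancel) and $\tilde\lambda^d(\delta')^{4-d}\sim r_0^{-4}\delta^{4-d}(\delta n^*)^d$, so that after multiplying by the $r_0^4$ from the volume scaling, the $r_0$-powers cancel completely, up to a $r_0^{-\eps'}\leq \delta^{-\eps'}$ slack from the $\eps'$ in the $\mathbf{TE}$ bound. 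Using $(n^*)^d\gtrsim n^{d(1-\eps_1)}$ and $n\sim \lambda_Y/\delta$ (up to polylog) then yields a lower bound of the shape $c\lambda_Y^d\delta^{4-d+d\eps_1+O(\eps')}(\delta^3|\tubes|)^b$; choosing $\eps_1,\eps'$ sufficiently small relative to $\eps$ absorbs all losses into $\delta^\eps$. The key structural observation is that the $r_0$-exponents balance precisely because $a=d$: the $r_0^{-d}$ from $\tilde\lambda^d$ matches the $r_0^{-(4-d)}$ from $(\delta')^{4-d}$ to give $r_0^{-4}$, which is exactly cancelled by the $r_0^4$ from volume scaling; with $a\neq d$ there would be a leftover $r_0$-power that could not be absorbed. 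I expect the main obstacle to be this careful bookkeeping of the $r_0$-powers (verifying the cancellation) and the polylogarithmic losses, all of which must be organized to fit inside the $\delta^\eps$ margin.
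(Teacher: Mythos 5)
Your proposal follows the paper's outline closely in its first step (uniformize per-tube cube counts, apply Lemma \ref{twoEndsBuildingBlock} to get a common scale $r_0$ and centers $x_T$) and in the bookkeeping at the end, but the middle step --- applying $\mathbf{TE}(d,d,b)$ after a single global dilation $y\mapsto y/r_0$ --- has a genuine gap. The centers $x_T$ vary from tube to tube, so after the global dilation the rescaled sub-shadings $\tilde Y(\tilde T)$ sit in unit balls centered at the scattered points $x_T/r_0$; the resulting collection of $\delta'$-tubes is spread over a region of diameter $\sim 1/r_0 \gg 1$, and Assertion $\mathbf{TE}(d,d,b)$ (as it is stated and as it is ultimately proved in the paper) concerns a collection of $\delta'$-tubes living in a bounded region. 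You have correctly identified that a per-tube recentering would destroy the identity $|\bigcup \tilde Y| = r_0^{-4}|\bigcup Y^*|$ (distinct tubes could be translated on top of each other), so you opt for a pure dilation --- but then you lose boundedness, and you cannot invoke $\mathbf{TE}$ as a black box.

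The paper resolves exactly this tension by interposing a ball-decomposition step. It covers the centers by a boundedly overlapping family of balls of radius $2r_0$, pigeonholes to a disjoint subfamily $\mathcal{B}_1$, groups the tubes into $\tubes_B = \{T: Y_3(T)\subset B\}$, rescales each ball $B$ separately to the unit ball by a per-ball affine map (same translation for all tubes in $B$, so the volume identity is preserved ball by ball), applies $\mathbf{TE}$ within each ball, and then sums. The summation step is where the hypothesis $b\le 1$ earns its keep: one uses the subadditivity of $t\mapsto t^b$ to get $\sum_{B}(\delta^3|\tubes_B|)^b \ge \big(\delta^3\sum_B|\tubes_B|\big)^b \gtrsim (\delta^3|\tubes|)^b$. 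Your proposal never uses $b\le 1$, which is a telltale sign that this step is missing. To repair your argument you would need either to add this ball-grouping step, or to separately prove a lemma that $\mathbf{TE}(d,d,b)$ for bounded collections implies the same for scattered ones --- and that lemma is precisely the ball-grouping argument.
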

\begin{proof}
Let $(\tubes_1,Y_1)$ be a refinement of $(\tubes,Y)$ so that $|Y_1(T)|/|T|\sim\lambda_{Y_1}$ for each $T\in\tubes_1$. Write $\tubes_1 = \{T_1,\ldots,T_n\}$. For each index $i$, define $X_i$ to be the set of $\delta$-cubes $Q\subset Y_1(T)$, and define the metric $d_i$ on $X_i$, where the distance between two cubes to be the distance between their centers. 

Let $\eps_1 = \eps/(4d)$ and apply Lemma \ref{twoEndsBuildingBlock} to the metric spaces $(X_1,d_1),\ldots, (X_n,d_n)$ with this choice of $\eps_1$. We obtain a set $I\subset[n]$, a radius $r_0$, and points $\{x_i\in X_i,\ i\in I\}$. Define $\tubes_2 = \{T_i\colon i\in I\}$, and for each $T_i\in \tubes_2$, define $Y_2(T)=Y_1(T)\cap B(x_i,r)$. We have
\begin{equation*}
\sum_{T\in\tubes_2}|Y_2(T)|\gtrapprox \delta^{\eps_1}\sum_{T\in\tubes_1}|Y_1(T)|\gtrapprox \delta^{\eps_1}\sum_{T\in\tubes}|Y(T)|.
\end{equation*}

Let $\tubes_3\subset\tubes_2$ so that if we define $Y_3$ to be the restriction of $Y_2$ to $\tubes_3$, then $(\tubes_3,Y_3)$ is a refinement of $(\tubes_2,Y_2)$ with $|Y_3(T)|/|T|\sim \lambda_{Y_3}$ for each $T\in\tubes_3$. Note that
\begin{equation}
\lambda_{Y_3} = \frac{1}{|\tubes_3|}\sum_{T\in\tubes_3}|Y_3(T)|\gtrapprox \delta^{-\eps_1} \lambda_Y |\tubes|/|\tubes_3|.
\end{equation}

Let $\mathcal{B}$ be a set of finitely overlapping balls of radius $2r$ so that every ball of radius $r$ is contained in a ball from $\mathcal{B}$. After pigeonholing, we can select a set $\mathcal{B}_1\subset\mathcal{B}$ of disjoint balls so that 
\begin{equation}
\sum_{B\in\mathcal{B}_1}|\{T\in\tubes_3\colon Y_3(T)\subset B\}| \gtrsim |\tubes_3|.
\end{equation}
For each $B\in\mathcal{B}_1,$ let $\tubes_B = \{T\in\tubes_3\colon Y_3(T)\subset B\}.$ Observe that for each $B\in\mathcal{B}_1$ and each $T\in\tubes_B$, we have
\begin{equation*}
|Y_3(T)|/|T|\sim \lambda_{Y_3} \sim\frac{1}{|\tubes_B|}\sum_{T\in\tubes_B}|Y_3(T)|/|T|.
\end{equation*}

For each $B\in\mathcal{B}_1$, let $A\colon\RR^3\to\RR^3$ be an affine map that sends $B$ to the unit ball. For each $T\in\tubes_B$, let $\tilde T$ be the image of $T\cap B$ under $A$, and let $\tilde Y_3(T)$ be the image of $Y_3(T)$ (which is the same as the image of $Y_3(T)\cap B$). Note that $\lambda_{\tilde Y_3}\sim \lambda_{Y_3}/r$. Define $\tilde\delta = \delta/r$ and $\tilde \tubes_B=\{\tilde T\colon T\in\tubes_B\}$. Let $\tilde\tubes_B^\prime$ be a maximal subset of $\tilde\tubes_B$ that point in $\delta/r$-separated directions; we have 
\begin{equation*}
|\tilde\tubes_B^\prime|\gtrsim r^3|\tilde\tubes_B|=r^3|\tubes_B|,
\end{equation*}
and $(\tilde\tubes_B, \tilde Y_3)$ is $(\eps_1, 100)$ two-ends. Applying Assertion $\mathbf{TE}(d,a,b)$, we conclude that there exists a constant $c^\prime>0$ so that
\begin{equation*}
\Big|\bigcup_{T\in\tilde\tubes_B^\prime}\tilde Y_3(T)\Big|\geq c^\prime \tilde\delta^{\eps/2}\lambda_{\tilde Y_3}^d\tilde\delta^d (\tilde\delta^3|\tilde\tubes_B^\prime|)^b.
\end{equation*}
Since $\tilde\tubes_B^\prime\subset \tilde\tubes_B$, we have $\Big|\bigcup_{T\in\tilde\tubes_B^\prime}\tilde Y_3(T)\Big|\leq \Big|\bigcup_{T\in\tilde\tubes_B}\tilde Y_3(T)\Big|$. Undoing the linear transformation $A\colon\RR^3\to\RR^3$, we conclude that
\begin{equation}
\begin{split}
\Big|\bigcup_{T\in\tubes_B}Y_3(T)\Big|& \gtrsim r^4 c^\prime \tilde\delta^{\eps/2}\lambda_{\tilde Y_3}^d\tilde\delta^{4-d} (\tilde\delta^3|\tilde\tubes_B^\prime|)^b\\
&\gtrsim r^4 c^\prime(\delta/r)^{\eps/2}(\lambda_{Y_3}/r)^d(\delta/r)^{4-d} \big( (\delta/r)^3 (r^3|\tubes_B|)\big))^b\\
&\gtrapprox c^\prime \delta^{\eps/2}\lambda_{Y_3}^d \delta^{4-d}(\delta^3|\tubes_B|)^b.
\end{split}
\end{equation}
Summing over all $B\in\mathcal{B}_1$ and noting that $b\leq 1$, we have
\begin{equation}
\begin{split}
\Big|\bigcup_{T\in\tubes}Y(T)\Big|&\geq \sum_{B\in\mathcal{B}_1}\Big|\sum_{T\in\tubes_B}Y_3(T)\Big|\\
&\gtrapprox \sum_{B\in\mathcal{B}_1} c^\prime\delta^{\eps/2}\lambda_{Y_3}^d \delta^{4-d}(\delta^3|\tubes_B|)^b\\
&\gtrapprox c^\prime\delta^{\eps/2}\lambda_{Y_3}^d \delta^{4-d}\big(\delta^3\sum_{B\in\mathcal{B}_1}|\tubes_B|\big)^b\\
&\gtrapprox c^\prime\delta^{\eps/2}\big(\delta^{\eps_1}\lambda_Y |\tubes|/|\tubes_3|\big)^d \delta^{4-d}(\delta^3|\tubes_3|)^b\\
&= c^\prime\delta^{\eps/2 + \eps_1d}\lambda_Y^d\delta^{4-d} (\delta^3|\tubes|)^b \big(|\tubes|/|\tubes_3|\big)^{d-b}\\
&=c^\prime\delta^{(3/4)\eps}\lambda_Y^d\delta^{4-d} (\delta^3|\tubes|)^b.
\end{split}
\end{equation}
On the last line we used the fact that $d\geq 1$ and $b\leq 1$, so $d-b\geq 0$ (and $|\tubes|/|\tubes_3|\leq 1$). Thus if we select $c>0$ sufficiently small, we have
\begin{equation*}
\Big|\bigcup_{T\in\tubes}Y(T)\Big|\geq c\delta^{\eps}\lambda_Y^d\delta^{4-d} (\delta^3|\tubes|)^b.\qedhere
\end{equation*}
\end{proof}

\begin{rem}
For simplicity, we stated and proved the two-ends reduction for collections of tubes in $\RR^4.$ However, the same proof works in all dimensions. Note as well that the two-ends reduction also works for collections of tubes that satisfy the Wolff axioms. The only thing to verify is that a suitable choice for the refined set $\tilde\tubes_B^\prime$ in the above proof exists, and satisfies the Wolff axioms; this is easy to construct (for example, select each tube at random with probability $r^{d-1},$ where $d$ is the dimension). This observation will be useful in Appendix \ref{R3KakeyaSec} below.
\end{rem}

\subsection{The robust transversality reduction}

\begin{defn}[Robust transversality]\label{robustTransDef}
Let $(\tubes,Y)$ be a set of tubes and their associated shading. We say that $(\tubes,Y)$ is $(\eps_2,C_2)$-robustly transverse if for all $\delta$-cubes $Q$, all vectors $v$, and all $\delta\leq 1$, we have the following bound on the number of tubes passing through $Q$ and making small angle $r$ with $v$,
for any $r$ with $\delta < r <1$.
\begin{equation}\label{notTooConcentratedAngle}
|\{T\in\tubes_Y(Q)\colon \angle(v(T),v)\leq r\}|\leq r^{\eps_2}C_2\mu_Y.
\end{equation}
\end{defn}

\begin{robustTransAss}
Let $\delta>0$ and let $(\tubes,Y)$ be a set of $\delta$-tubes in $\RR^4$ pointing in $\delta$-separated directions and their associated shading. Suppose that $(\tubes,Y)$ is $(\eps_1,C_1)$-two-ends and $(\eps_2,100)$-robustly transverse. Then for each $\eps>0$,
\begin{equation}\label{twoEndsConditionRequirement}
\Big| \bigcup_{T\in\tubes}Y(T) \Big|\geq c\lambda_Y^{a}C_1^{-C/\eps_1}\delta^{4-d+\eps}(\delta^{3}|\tubes|)^b,
\end{equation}
where $C=C(d,a,b)$ is a constant that depends only on $d$, $a$, and $b$, and $c>0$ is a constant that depends only on $d,a,b,\eps,\eps_1,$ and $\eps_2$ (in particular, both constants are independent of $\delta$).
\end{robustTransAss}

Note that if Assertion $\mathbf{RT}(d,a,b)$ is true, then Assertion $\mathbf{RT}(d,a,b^\prime)$ is also true for all $b^\prime\geq b$.

\begin{prop}\label{robustTransReduction}
Suppose that Assertion $\mathbf{RT}(d,a,b)$ is true for some $1\leq d\leq 4$, $a\geq 0$, and $\frac{d-1}{3}\leq b\leq 1$. Then Assertion $\mathbf{TE}(d,a,b)$ is true.
\end{prop}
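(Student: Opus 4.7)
The plan is to mimic the proof of the two-ends reduction (Proposition~\ref{twoEndsConditionThm}), but carried out in direction space rather than physical space. Specifically, I would produce a subcollection $(\tilde\tubes,\tilde Y)$---possibly after a single affine rescaling of $\RR^4$---that is simultaneously $(\eps_1',O(1))$-two-ends and $(\eps_2,O(1))$-robustly transverse, and then invoke Assertion~$\mathbf{RT}(d,a,b)$ to this subcollection.

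The construction proceeds as follows. First, refine $(\tubes,Y)$ so that $|\tubes_Y(Q)|\sim\mu_Y$ is essentially constant for all $Q\in\mathcal{Q}(Y)$, losing only a factor of $|\log\delta|^{-1}$. Next, for each surviving $Q$, I would apply Lemma~\ref{largeHomogeneousPiece} to the finite metric space of tube directions $\{v(T):T\in\tubes_Y(Q)\}\subset S^3$, with the angular metric and nonconcentration parameter $\eps_2$ chosen small in terms of $\eps,\eps_1,d,a,b$; this yields an angular ball $B(v_0(Q),r_0(Q))$ inside which the directions are nonconcentrated. Pigeonhole via Lemma~\ref{twoEndsBuildingBlock} to fix a common dyadic value $r_0$ and a common angular cap containing the centers $v_0(Q)$, at the cost of polylogarithmic factors. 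Restricting each shading $Y(T)$ to cubes $Q$ compatible with this structure gives a subcollection $(\tubes',Y')$ that inherits the two-ends property and whose tube directions all lie inside a common angular ball of radius $O(r_0)$ on $S^3$. Apply the affine map $A\colon\RR^4\to\RR^4$ which fixes the direction $v_0$ and stretches the orthogonal three-plane by $1/r_0$. This produces a rescaled collection $(\tilde\tubes,\tilde Y)$ of $(\delta/r_0)$-tubes in $(\delta/r_0)$-separated directions that is, by construction of $B(v_0(Q),r_0(Q))$ via Lemma~\ref{largeHomogeneousPiece}, $(\eps_2,O(1))$-robustly transverse; a calculation parallel to that in the proof of Proposition~\ref{twoEndsConditionThm} shows that the two-ends condition also survives, with any degradation in the constant $C_1$ absorbed into the factor $C_1^{-C/\eps_1}$ in the conclusion of $\mathbf{RT}(d,a,b)$. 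Applying $\mathbf{RT}(d,a,b)$ to $(\tilde\tubes,\tilde Y)$ at scale $\tilde\delta=\delta/r_0$ and transferring the resulting volume lower bound back to the original coordinates via the Jacobian $r_0^3$ of $A^{-1}$ then yields the desired bound on $|\bigcup_T Y(T)|$.

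The main obstacle is the exponent bookkeeping in the final transfer step. The rescaling contributes a factor $r_0^3$ from the Jacobian, together with factors $r_0^{-(4-d+\eps)}$ and $r_0^{-3b}$ from $\tilde\delta^{4-d+\eps}(\tilde\delta^3)^b$; the direction-counting bound $|\tilde\tubes|\leq (r_0/\delta)^3$ contributes a compensating factor through $|\tilde\tubes|^b$. The hypothesis $b\geq(4-d)/3$ is precisely what guarantees that the residual power of $r_0$ in this calculation is nonnegative, so that the transfer incurs no loss (since $r_0\leq 1$). A secondary, more routine difficulty is keeping track of how the effective parameters $\lambda_Y$, $\mu_Y$, and $|\tubes|$ transform under the angular restriction and the affine rescaling; this parallels the parameter-tracking in the proof of Proposition~\ref{twoEndsConditionThm}, and the polylogarithmic losses that accumulate at each stage are absorbed into the $\delta^\eps$ factor in the conclusion.
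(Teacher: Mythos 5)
Your high-level instinct---run the two-ends reduction in direction space and invoke the homogenization lemmas---is the right one, and your accounting of where the hypothesis $b\geq\frac{4-d}{3}$ enters (so the residual power of $r_0$ after rescaling is nonnegative) matches the paper. But there is a genuine gap in the construction: you claim that after applying Lemma~\ref{twoEndsBuildingBlock} you can ``fix a common dyadic value $r_0$ \emph{and a common angular cap containing the centers} $v_0(Q)$, at the cost of polylogarithmic factors.'' Lemma~\ref{twoEndsBuildingBlock} only normalizes the radius; it says nothing about the centers $x_i$, which in general are spread over all of $S^3$. For a typical Kakeya configuration the centers $v_0(Q)$ populate roughly $r_0^{-3}$ essentially disjoint caps of radius $r_0$, so pigeonholing to a single cap incurs a loss of order $r_0^{3}$, not $|\log\delta|^{-O(1)}$. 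Since $r_0$ can be as small as $\delta$, this loss is fatal. Your subsequent step---a single affine map fixing one direction $v_0$ and stretching the orthogonal $3$-plane by $1/r_0$---inherits this error: there is no common $v_0$ to fix, and even if there were, tubes with nearby directions but widely separated positions would not be normalized to a bounded region by a single global map.

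The correct mechanism, and the one the paper uses, is to decompose rather than pigeonhole. After restricting $Y_1(T)$ to cubes $Q$ with $\angle(v(T),v_Q)\leq 2r$, any two tubes $T,T'$ whose restricted shadings intersect automatically satisfy $\angle(v(T),v(T'))\lesssim r$ and pass through a common $\delta$-cube, hence are both contained in a common $10r$-tube. The paper covers $\tubes_2$ by a finitely-overlapping family $\mathcal{S}$ of such $10r$-tubes, extracts a subfamily $\mathcal{S}'$ for which the tube populations are disjoint and $\sum_{S\in\mathcal{S}'}|\{T\subset S\}|\gtrapprox|\tubes_2|$, applies a \emph{per-$S$} anisotropic rescaling $F$ sending $S$ to the unit ball, invokes $\mathbf{RT}(d,a,b)$ on each rescaled piece $(\tilde\tubes_S,\tilde Y_S)$, and then sums over $S$, using concavity of $x\mapsto x^b$ (here $b\leq 1$ is needed) and $a\leq 4$ (which bounds the accumulated $\delta^{a\eps_2}$ loss) to reassemble the global estimate. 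This is the direct analogue of the disjoint ball family $\mathcal{B}_1$ in the proof of Proposition~\ref{twoEndsConditionThm}: you do not get a single ball there either. Repair your argument by replacing the single-cap pigeonholing and global rescaling with this finitely-overlapping $10r$-tube decomposition, a family of rescalings, and a final sum over the pieces.
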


\begin{proof}
Fix $\eps>0$ and let $(\tubes,Y)$ be a set of $\delta$-tubes and their associated shading that is $(\eps_1,C_1)$-two-ends. Let $(\tubes_1,Y_1)$ be a refinement of $(\tubes,Y)$ so that $|\tubes_1(Q)|\sim \mu_{Y_1}$ for all $Q\in\mathcal{Q}(Y_1)$. Since $(\tubes,Y)$ is $(\eps_1,C_1)$-two-ends, we also have $|Y(T)|/|T|\leq C_1\lambda_Y$ for each $T\in\tubes$, and thus 
 \begin{equation}\label{tubes1Big}
|\tubes_1|\gtrapprox C_1^{1/\eps_1}|\tubes|.
\end{equation}

Write $\mathcal{Q}(Y_1) = \{Q_1,\ldots,Q_n\}$. For each index $i$, define $X_i = \{v(T)\colon T\in \tubes_1(Q_i)\}$. We identify $X_i$ with a subset of $S^3$, and let $d_i$ be the metric on $X_i$ induced by the usual metric on $S^3$.

Let $\eps_2>0$ be sufficiently small so that
\begin{equation}\label{boundOnEps2}
5\eps_2+C\eps_2/\eps_1\leq\eps/4,
\end{equation}
where $C=C(d,a,b)$ is the constant from \eqref{twoEndsConditionRequirement}.

Apply lemma \ref{twoEndsBuildingBlock} to the metric spaces $(X_1,d_1),\ldots, (X_n,d_n)$. We obtain a number $\delta\leq r\leq 1$; a set $I\subset[n]$, and vectors $\{v_i\in X_i,\ i\in I\}$. Define $\mathcal{Q}^\prime = \{Q_i\colon i\in I\}$; define $\tubes_2=\tubes_1$; and for each $T\in \tubes_2$, define $Y_2(T)$ to be the union of cubes 
\begin{equation*}
\{Q\subset Y_1(T)\colon Q \in\mathcal{Q}^\prime,\ \angle(v(T),v_Q)\leq 2r\}
\end{equation*}
(here $v_Q$ is the vector $v_i\in X_i$ described above, where $i$ is the index corresponding to $Q$).  We have
\begin{equation*}
\sum_{T\in\tubes_2}|Y_2(T)|\gtrapprox\delta^{\eps_2}\sum_{T\in\tubes_1}|Y_1(T)|\gtrapprox \delta^{\eps_2}\sum_{T\in\tubes}|Y(T)|.
\end{equation*}

Let $\mathcal{S}$ be a set of essentially distinct $10r$-tubes so that whenever two tubes $T,T^\prime\in\tubes_2$ satisfy $Y_2(T)\cap Y_2(T^\prime)\neq\emptyset$, there exists a $10r$ tube from $\mathcal{S}$ that contains $T$ and $T^\prime$. Since the tubes in $\mathcal{S}$ are essentially distinct, each tube $T\in\tubes_2$ is contained in $O(1)$ tubes from $\mathcal{S}$. By pigeonholing, we can select a set $\mathcal{S}^\prime\subset\mathcal{S}$ so that the sets 
\begin{equation*}
\{T\in\tubes_2\colon T\in S\}_{S\in\mathcal{S}^\prime}
\end{equation*}
are disjoint, 
\begin{equation*}
\sum_{S\in \mathcal{S}^\prime}|\{T\in\tubes_2\colon T\subset S\}| \gtrapprox |\tubes_2|,
\end{equation*}
and 
\begin{equation*}
\lambda_S:=\frac{1}{|\{T\in\tubes_2\colon T\subset S\}|}\sum_{\substack{T\in\tubes_{2}\\ T\subset S}}|Y_2(T)|/|T|\gtrapprox \delta^{\eps_2}\lambda\quad\textrm{for each}\ S\in\mathcal{S}^\prime.
\end{equation*}
 For each $S\in\mathcal{S}^\prime$, let $F$ be a linear map sending $S$ to a set $F(S)$ that contains $B(0,1/2)$ and is contained in $B(0,2)$. Define $\tilde\delta=\delta/s$, define 
\begin{equation*}
 \tilde\tubes_S = \{F(T)\colon T\in\tubes_2,\ T\subset S\},
\end{equation*}
 and define $\tilde Y_S(\tilde T) = F(Y_2(T))$. Note that the sets in $\tilde{\tubes}_S$ are not technically $\tilde\delta$ tubes, but they contain a $O(1)$-dilate of a $\tilde\delta$ tube and are contained in a $O(1)$ dilate of a $\tilde\delta$ tube. We will abuse notation slightly and refer to them as $\tilde\delta$ tubes. Similarly, $\tilde{Y}_S(T)$ is not a union of $\tilde\delta$ cubes. However, this minor technicality will not affect our estimates below. 

For each $T\in\tilde\tubes_S$ we have $|\tilde Y_S(\tilde T)|/|\tilde T|\sim |Y_2(T)|/|T|,$ and thus
\begin{equation}
\frac{1}{|\tilde \tubes_S|}\sum_{\tilde T \in \tilde \tubes_S }|\tilde Y_S(\tilde T)|/|\tilde T|\gtrapprox \delta^{\eps_2}\lambda_Y.
\end{equation}

Observe that for each $S\in\mathcal{S}^\prime$, we have that $(\tilde\tubes_S,\tilde Y_S)$ is $(\eps_1,\delta^{-\eps_2}C_1)$-two-ends and $(\eps_2,100)$-robustly transverse.

By Assertion $\mathbf{RT}(d,a,b)$, there exists a constant $c^\prime>0$ so that
\begin{equation*}
\Big|\bigcup_{\tilde T\in \tilde\tubes_S}\tilde Y_S(\tilde T)\Big|\geq c^\prime (C_1\delta^{-\eps_2})^{-C/\eps_1}\tilde\delta^{\eps/2}\big(\delta^{\eps_2}\lambda_Y\big)^a\tilde\delta^{4-d}(\tilde\delta^3|\tilde\tubes_S|)^{b},
\end{equation*}
and thus
\begin{equation}
\begin{split}
\Big|\bigcup_{\substack{T\in\tubes_2\\ T\subset S}} Y_2(T)\Big|
& \gtrapprox c^\prime\delta^{\eps/2+a\eps_2+C\eps_2/\eps_1}r^3\lambda_Y^a(\delta/r)^{4-d}\big( (\delta/r)^3|\{T\in\tubes_2\colon T\subset S\}|\big)^b\\
&\gtrapprox c^\prime\delta^{\eps/2+a\eps_2+C\eps_2/\eps_1}\lambda_Y^a \delta^{4-d} \big( \delta^3|\{T\in\tubes_2\colon T\subset S\}|\big)^{\frac{4-d}{3}},
\end{split}
\end{equation}
where on the second line we used the fact that $r\leq1$ and $b\geq\frac{d-1}{3}$, so $r^{d-1-3b}\geq 1$. Summing over $S\in\mathcal{S}^\prime$ and using \eqref{tubes1Big} and the fact that $b\leq 1$ and $a\leq 4$, we conclude
\begin{equation}
\begin{split}
\Big|\bigcup_{T\in\tubes_2} Y_2(T)\Big|& \gtrapprox c^\prime\delta^{\eps/2+4\eps_2+C\eps_2/\eps_1} \lambda_Y^a \delta^{4-d}( \delta^3 |\tubes_2|)^{b}\\
&\gtrapprox  c^\prime\delta^{\eps/2+5\eps_2+C\eps_2/\eps_1}\lambda_Y^a \delta^{4-d}( \delta^3 |\tubes|)^b\\
&\gtrapprox c^\prime\delta^{(3/4)\eps}\lambda_Y^a \delta^{4-d}( \delta^3 |\tubes|)^b,
\end{split}
\end{equation}
where on the last line we used \eqref{boundOnEps2}. Thus If $c>0$ is selected sufficiently small, then 
\begin{equation*}
\Big|\bigcup_{T\in\tubes} Y(T)\Big| \geq c\delta^{\eps} C_1^{-C/\eps_1}\lambda_Y^a \delta^{4-d}( \delta^3 |\tubes|)^b.\qedhere
\end{equation*}
\end{proof}

\section{Previous Kakeya-type estimates in $\RR^4$}

\subsection{Wolff's hairbrush estimate}
In \cite{W}, Wolff proved new Kakeya maximal function estimates using a geometric argument called the ``hairbrush'' argument. We will recall a consequence of Wolff's hairbrush argument. The formulation presented here is described in \cite{T3}, and our proof below  follows the arguments from \cite{T3}.

\begin{thm}\label{wolffThm}
Assertion $\mathbf{RT}(3,2,1/2)$ is true.
\end{thm}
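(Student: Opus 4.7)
The plan is to prove $\mathbf{RT}(3,2,1/2)$ by implementing Wolff's hairbrush argument in the shading framework of Section~2. After a $|\log\delta|^{O(1)}$-refinement, absorbed into the $C_1^{-C/\eps_1}\delta^{\eps}$ in the conclusion, we may assume $(\tubes,Y)$ is homogeneous: $|Y(T)|/|T|\sim\lambda_Y$ for every $T\in\tubes$ and $|\tubes_Y(Q)|\sim\mu_Y$ for every $Q\in\mathcal{Q}(Y)$. Setting $K=|\bigcup_{T}Y(T)|$, the identities $\sum_T|Y(T)|\sim\lambda_Y|\tubes|\delta^{3}$ and $\sum_Q|\tubes_Y(Q)|\delta^{4}\sim\mu_Y K$ give $\mu_Y\sim\lambda_Y|\tubes|\delta^{3}/K$.

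Next I would pick a ``typical'' tube $T_0\in\tubes$ and analyze its hairbrush $H=H_Y(T_0)$. The shading $Y(T_0)$ contains $\sim\lambda_Y\delta^{-1}$ cubes, each carrying $\sim\mu_Y$ tubes. By $(\eps_2,100)$-robust transversality, at most $100r^{\eps_2}\mu_Y$ of those tubes make angle less than $r$ with $v(T_0)$, so choosing $r=\delta^{\eps/10}$ and discarding the nearly-parallel tubes (a loss of $\delta^{O(\eps)}$) leaves tubes that are transverse to $T_0$ and hence meet $Y(T_0)$ in $O(1)$ cubes each. Consequently
\begin{equation*}
|H|\gtrapprox\mu_Y\lambda_Y\delta^{-1}.
\end{equation*}

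I would then partition $H$ according to which thickened $2$-plane through the axis of $T_0$ contains each tube. Since the $2$-planes in $\RR^{4}$ containing a fixed line form a $2$-dimensional family, $\RR^{4}$ is covered by $O(\delta^{-2})$ slabs $P_\alpha$, each the $\delta$-neighborhood of such a $2$-plane and essentially disjoint off a neighborhood of $T_0$; every transverse $T\in H$ lies in the unique $P_\alpha$ spanned by $T$ and the axis of $T_0$. Let $M_\alpha$ be the number of tubes of $H$ in $P_\alpha$, so $\sum_\alpha M_\alpha\sim|H|$. Within a fixed slab $P_\alpha$ the tubes have $\delta$-separated directions in a $2$-plane, and two such tubes at angle $\theta$ intersect in volume $\lesssim\delta^{4}/\theta$. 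The usual Córdoba $L^{2}$ computation then yields
\begin{equation*}
\Big(\sum_{T\subset P_\alpha}|Y(T)|\Big)^{2}\leq\Big|\bigcup_{T\subset P_\alpha}Y(T)\Big|\sum_{T,T'\subset P_\alpha}|Y(T)\cap Y(T')|\lesssim\Big|\bigcup_{T\subset P_\alpha}Y(T)\Big|M_\alpha\delta^{3}|\log\delta|,
\end{equation*}
so $|\bigcup_{T\subset P_\alpha}Y(T)|\gtrapprox M_\alpha\lambda_Y^{2}\delta^{3}$. Summing over the essentially disjoint slabs and inserting the hairbrush count,
\begin{equation*}
K\geq\Big|\bigcup_{T\in H}Y(T)\Big|\gtrapprox|H|\lambda_Y^{2}\delta^{3}\gtrapprox\mu_Y\lambda_Y^{3}\delta^{2},
\end{equation*}
and substituting $\mu_Y\sim\lambda_Y|\tubes|\delta^{3}/K$ gives $K^{2}\gtrapprox\lambda_Y^{4}\delta^{5}|\tubes|$, i.e.\ $K\gtrapprox\lambda_Y^{2}\delta(\delta^{3}|\tubes|)^{1/2}$, the bound required by $\mathbf{RT}(3,2,1/2)$.

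The main obstacle is not any single step but the bookkeeping: one must justify that a typical $T_0$ with the required quantitative properties exists (this is where the $(\eps_1,C_1)$-two-ends hypothesis enters, preventing pathological concentration of $Y(T_0)$ in a tiny ball that would destroy the hairbrush count), and then track the $|\log\delta|^{O(1)}$, $C_1$, and $\delta^{O(\eps_2)}$ losses from the refinements and from the robust-transversality pruning so that they fit inside the allowed $c\,C_1^{-C/\eps_1}\delta^{\eps}$ factor.
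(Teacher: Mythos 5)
Your overall approach is the same as the paper's: build a hairbrush $H$ around a well-chosen $T_0$, partition $H$ into thickened $2$-planes through the axis of $T_0$, run C\'ordoba in each slab, and sum. The final numbers also match. There is, however, a real gap in the step ``summing over the essentially disjoint slabs,'' and your last paragraph misdiagnoses it.

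The slabs $P_\alpha = N_{2\delta}(\Pi_\alpha)$ through the axis $L_0$ of $T_0$ are \emph{not} essentially disjoint near $L_0$: within distance $t$ of $L_0$ they are $\sim 1/t$-fold overlapping. So summing the per-slab lower bounds $|\bigcup_{T\subset P_\alpha}Y(T)|\gtrapprox M_\alpha\lambda_Y^2\delta^3$ is only legitimate if the bulk of each shading $Y(T)$, $T\in H$, lies at distance $\gtrsim 1$ (or at least $\gtrsim c_1 C_1^{-1/\eps_1}$) from $L_0$. A priori this can fail badly: for a given $T\in H$ meeting $Y(T_0)$ at a cube $Q'$, nothing in your argument prevents $Y(T)$ from being entirely concentrated in a tiny ball around $Q'$, in which case $\bigcup_{T\in H}Y(T)$ is itself essentially a neighborhood of $T_0$ and the slab sum collapses. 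The place where the $(\eps_1,C_1)$-two-ends hypothesis is genuinely needed is therefore on the \emph{hairbrush tubes} $T\in H$, not on $T_0$: it guarantees that for most $T$, a $\gtrsim 1$ fraction of the cubes $Q\subset Y(T)$ satisfy $\operatorname{dist}(Q,Q')\gtrsim c_1 C_1^{-1/\eps_1}$. In the paper this is done symmetrically by counting triples $(T,Q,Q')$ with $Q,Q'\subset Y(T)$ and discarding the half of them with $\operatorname{dist}(Q,Q')$ small (this is where the $C_1^{-1/\eps_1}$ factor in the final bound comes from), then pigeonholing over the ``middle'' tube to find $T_0$, and finally restricting C\'ordoba to the cubes $Q$, which are now uniformly far from $L_0$. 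Your remark that two-ends ``prevents pathological concentration of $Y(T_0)$ in a tiny ball'' points at the wrong tube: even if $Y(T_0)$ is perfectly spread out, the shadings of the tubes it brushes could still be concentrated right where they cross $T_0$, and that is what would kill the argument. One more minor point: choosing $r=\delta^{\eps/10}$ in the robust-transversality pruning makes the surviving tubes only $\delta^{\eps/10}$-transverse to $T_0$, so each can meet $Y(T_0)$ in $\delta^{-\eps/10}$ cubes rather than $O(1)$; this still lands inside the allowed $\delta^{O(\eps)}$ losses, but the cleaner choice (as in the paper) is a fixed $r=c_2(\eps_2)$ small enough that $c_2^{\eps_2}\cdot 100\leq 1/2$, giving a genuine constant-angle transversality.
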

\begin{proof}
Let $\eps_1>0,\eps_2>0$, and $\eps>0$. We need to prove that there exists an absolute constant $C$ and a constant $c=c(\eps,\eps_1,\eps_2)>0$ so that whenever $(\tubes,Y)$ is a set of direction-separated $\delta$-tubes that is $(\eps_1,C_1)$-two-ends and $(\eps_2,100)$-robustly transverse, we have
\begin{equation}\label{WolffBoundRobustTrans}
\Big|\bigcup_{T\in\tubes}Y(T)\Big|\geq c \lambda_Y^{2} C_1^{-C/\eps_1} \delta^{1+\eps} (\delta^3|\tubes|)^{1/2}.
\end{equation}

Let $(\tubes,Y)$ be a set of direction-separated $\delta$-tubes that is $(\eps_1,C_1)$-two-ends and $(\eps_2,100)$-robustly transverse. Without loss of generality, we can assume that $|\tubes_Y(Q)|\sim \mu_Y$ for each cube $Q\in\mathcal{Q}(Y)$; indeed, since $(\tubes,Y)$ is $(\eps_2,100)$-robustly transverse, this additional assumption can always be obtained after a harmless $\sim 1$-refinement of $(\tubes,Y)$. 

By Cauchy-Schwarz, we have
\begin{equation*}
|\{(T,Q,Q^\prime)\colon T\in \tubes,\ Q,Q^\prime\subset Y(T)\}|\gtrsim \delta^{-2}\lambda_Y^2 |\tubes|.
\end{equation*}
On the other hand, since $(\tubes,Y)$ is $(\eps_1,C_1)$-two-ends, we have that if the constant $c_1$ is chosen sufficiently small then
\begin{equation*}
|\{(T,Q,Q^\prime)\colon T\in \tubes,\ Q,Q^\prime\subset Y(T),\ \operatorname{dist}(Q,Q^\prime)\leq c_1 /C_1^{1/\eps_1}\}|\leq \frac{1}{2}|\{(T,Q,Q^\prime)\colon T\in \tubes,\ Q,Q^\prime\subset Y(T)\}|.
\end{equation*}
Fixing such a $c_1$, we have
\begin{equation*}
|\{(T,Q,Q^\prime)\colon T\in \tubes,\ Q,Q^\prime\subset Y(T),\ \operatorname{dist}(Q,Q^\prime)> c_1 /C_1^{1/\eps_1}\}|\gtrsim \delta^{-2}\lambda_Y^2 |\tubes|.
\end{equation*}
Since $(\tubes,Y)$ is $(\eps_2,100)$-robustly transverse, and $|\tubes_Y(Q)|\sim\mu_Y$ for each cube $Q\in\mathcal{Q}(Y)$, we have that for each of the triples $(T,Q,Q^\prime)$ described above, there is a small constant $c_2>0$ (depending on $\eps_2$) so that there are $\gtrsim \mu_Y$ tubes $T^\prime\in\tubes_Y(Q^\prime)$ with $\angle(v(T),v(T^\prime))\geq c_2$. In particular, if we define
\begin{equation*}
\mathcal{A} = \{(T,Q,T^\prime,Q^\prime)\colon T\in\tubes,\ Q,Q^\prime\subset Y(T),\ \operatorname{dist}(Q,Q^\prime)> c_1 /C_1^{1/\eps_1},\ \angle(v(T),v(T^\prime))\geq c_2\},
\end{equation*}
then
\begin{equation*}
|\mathcal{A}|\gtrsim \delta^{-2}\lambda_Y^2\mu_Y |\tubes|.
\end{equation*}
Thus by pigeonholing, there is a tube $T_0\in\tubes$ so that there are $\gtrsim \delta^{-2}\lambda_Y^2\mu_Y$ triples $(T,Q,Q^\prime)$ with the property that $(T,Q,T_0,Q^\prime)\in\mathcal{A}$. With this choice of $T_0$ fixed, define 
\begin{equation*}
\mathcal{B} = \{(T,Q,Q^\prime)\colon (T,Q,T_0,Q^\prime)\in\mathcal{A}\}.
\end{equation*}
Observe that if $T\in\tubes$ and $Q\subset Y(T)$, then there are $\lesssim 1$ cubes $Q^\prime$ so that $(T,Q,Q^\prime)\in\mathcal{A}.$ this is because any such tube must intersect $T\cap T_0$, and $\angle(T,T_0)\geq c_2$. Define
\begin{equation*}
\mathcal{C} = \{(T,Q)\colon (T,Q,Q^\prime)\in\mathcal{B}\ \textrm{for some cube}\ Q^\prime\}.
\end{equation*}
We have $|\mathcal{C}|\gtrsim \delta^{-2}\lambda_Y^2\mu_Y$. Define
\begin{equation*}
\mathcal{D} = \{Q\colon (T,Q)\in\mathcal{C}\ \textrm{for some tube}\ T\}.
\end{equation*}
We claim that 
\begin{equation}
|\mathcal{D}|\gtrsim \delta^{-2}|\log\delta|^{-1}\lambda_Y^2\mu_Y.
\end{equation}
Indeed, this follows from Wolff's hairbrush argument, which we will briefly recap here. Let $\Pi_i,\ i=1,\ldots,\delta^{-2}$ be a set of planes in $\RR^4$ with the following three properties.
\begin{itemize}
\item Each plane $\Pi_i$ contains the line $L_0$ coaxial with $T_0$.
\item Every $\delta$-tube intersecting $T_0$ is contained in the $2\delta$ neighborhood of at least one of these planes.
\item The $2\delta$-neighborhoods of the planes are boundedly overlapping far from $L_0$. More precisely, for each $t>\delta$ the sets $\{N_{2\delta}(\Pi_i)\backslash N_{t}(L_0) \}$ are $\lesssim 1+1/t$ overlapping. 
\end{itemize}
Note that there is a number $t \gtrsim c_1c_2 /C_1^{1/\eps_1}$ so that $\operatorname{dist}(Q,L_0)\geq t$ for all $Q\in\mathcal{D}$. Thus if we define 
\begin{equation*}
\mathcal{C}_i = \{(T,Q)\in \mathcal{C}\colon T\subset N_{2\delta}(\Pi_i) \},
\end{equation*}
and
\begin{equation*}
\mathcal{D}_i = \{Q\colon (T,Q)\in\mathcal{C}_i\ \textrm{for some tube}\ T\},
\end{equation*}
then $\mathcal{C} = \bigcup_{i}\mathcal{C}_i,$ $\mathcal{D} = \bigcup_{i}\mathcal{D}_i,$  and
\begin{equation}\label{CQsplit}
\Big|\bigcup_{Q\in\mathcal{C}}Q\Big| \gtrsim C_1^{-1/\eps_1} \sum_{i}\Big|\bigcup_{Q\in\mathcal{C}_i}Q\Big|,
\end{equation}
where the implicit constant depends on $c_1$ and $c_2$. Thus our task is to estimate $\Big|\bigcup_{\mathcal{C}_i}Q\Big|$ for each index $i$. 

Define 
\begin{equation*}
\tubes_i = \{T\colon (T,Q)\in\mathcal{C}_i\ \textrm{for some cube}\ Q\}.
\end{equation*}
For each $T\in\tubes_i$, define the shading $Y_i(T)$ to be the union of those cubes $Q\in\mathcal{D}_i$ with $(T,Q)\in\mathcal{C}_i$. Since each cube has volume $\delta^4$, we have
\begin{equation*}
\sum_i \sum_{T\in\tubes_i}|Y_i(T)|\geq \delta^4 |\mathcal{C}|\gtrsim \delta^{2}\lambda_Y^2\mu_Y.
\end{equation*}

By dyadic pigeonholing, we can select a set of indices $I\subset\{1,\ldots,\delta^{-2}\}$; for each index $i\in I$, there is a set of tubes $\tubes_i^\prime\subset\tubes_i$, and for each $T\in\tubes_i$, a sub-shading $Y_i^\prime(T)\subset Y_i(T)$, so that each of the sets of tubes $T_i,\ i\in I$ contains the same number of tubes (call this number $N$), and each shading $Y_i^\prime(T)$ contains the same number of cubes (call this number $M$). Since each cube has volume $\delta^4$, we have 
\begin{equation}\label{boundOnINM}
|I|\delta^4 NM =  \sum_{i\in I} \sum_{T\in\tubes_i^\prime}|Y^\prime_i(T)|\geq |\log\delta|^{-3}\delta^{2}\lambda_Y^2\mu_Y.
\end{equation}
Inequality \eqref{boundOnINM} lower bounds the average number of cubes contained in each shading $Y^\prime_i(T)$. Thus by Cauchy-Schwarz, we have
\begin{equation}\label{lowerBoundM}
M\geq |\log\delta|^{-3}\delta^{-1}\lambda_Y.
\end{equation}

Each tube in $\tubes_i^\prime$ is contained in $N_{2\delta}(\Pi_i)$, so in particular C\'ordoba's two-dimensional Kakeya argument from \cite{Cor} implies that
\begin{equation*}
\begin{split}
\int_{\RR^4} \Big(\sum_{T\in\tubes_i}\chi_{Y_i^\prime(T)}\Big)^2 &  \lesssim |\log\delta| \delta^3|\tubes_i^\prime|\\
&=|\log\delta| \delta^3N.
\end{split}
\end{equation*}
By Cauchy-Schwarz, we conclude that
\begin{equation*}
\begin{split}
\Big|\bigcup_{Q\in  \mathcal{C}_i}Q\Big| & \geq \Big|\bigcup_{T\in\tubes_i^\prime}Y_i^\prime(T)\Big|\\
& \gtrsim \Big(\sum_{T\in\tubes_i^\prime}|Y_i^\prime(T)|\Big)^2 \big/ \Big(|\log\delta| \delta^3N\Big)\\
& =|\log\delta|^{-1} \delta^5 N M^2.
\end{split}
\end{equation*}
Summing in $i$, we have
\begin{equation}\label{sumICQi}
\sum_{i}\Big|\bigcup_{Q\in \mathcal{C}_i}Q\Big|\gtrsim |I| |\log\delta|^{-1} \delta^5 N M^2.
\end{equation}
Combining \eqref{CQsplit}, \eqref{boundOnINM}, \eqref{lowerBoundM}, and \eqref{sumICQi}, we conclude that
\begin{equation}
\begin{split}
\Big|\bigcup_{T\in\tubes}Y(T)\Big|&\geq \Big|\bigcup_{Q\in \mathcal{C}}Q\Big|\\
&\gtrsim C_1^{-1/\eps_1} |\log\delta|^{-7} \lambda_Y^3\mu_Y \delta^2.
\end{split}
\end{equation}
On the other hand, we have 
\begin{equation*}
\mu_Y \Big|\bigcup_{T\in\tubes}Y(T)\Big| = \sum_{T\in\tubes}|Y(T)|=\lambda_Y (\delta^3|\tubes|).
\end{equation*}
Thus
\begin{equation*}
\Big|\bigcup_{T\in\tubes}Y(T)\Big|^2 \gtrsim C_1^{-1/\eps_1} |\log\delta|^{-7} \lambda_Y^4  \delta^2 (\delta^3|\tubes|),
\end{equation*}
which implies \eqref{WolffBoundRobustTrans}.
\end{proof}

\begin{rem}
Observe that the power of $\lambda$ and of $(\delta^3|\tubes|)$ in \eqref{WolffBoundRobustTrans} is better than one would expect from a Kakeya maximal function estimate in $\RR^4$ at dimension $3$ (indeed, one would expect a bound of the form $\lambda^3\delta(\delta^3|\tubes|)^{2/3}$. This improved dependence on $\lambda$ and the cardinality of $\tubes$ is possible because $(\tubes,Y)$ satisfies the two-ends and robust transversality conditions. One must be careful with estimates such as \eqref{WolffBoundRobustTrans} because they are not preserved under the two-ends and/or robust transversality reduction described in Propositions \ref{twoEndsConditionThm} and \ref{robustTransReduction}. 

This superior dependence on $\lambda$ and the cardinality of $\tubes$ will be crucial for our arguments, because later in the proof we will prove an estimate that is similar to \eqref{WolffBoundRobustTrans}, except it will have better dependence on $\delta$ and worse dependence on $\lambda$ and the cardinality of $\tubes$; we will then interpolate this estimate with \eqref{WolffBoundRobustTrans}.
\end{rem}

\subsection{\L{}aba-Tao's X-ray estimate}
We recall Theorem 1.2 from \cite{LT}:
\begin{thm}\label{LabaTaoTheorem}
Let $\tubes$ be a set of essentially distinct $\delta$-tubes in $\RR^4$. Suppose that for each vector $v$, there are at most $m$ tubes from $\tubes$ with $\angle(v,v(T))\leq\delta$. Then for each $\eps>0$, there is a constant $C_\eps$ so that
\begin{equation}
\Big\Vert \sum_{T\in\tubes} \chi_T \Big\Vert_{3/2}\leq C_{\eps} \delta^{-\frac{1}{3}-\eps}m^{\frac{5}{36}}(\delta^3|\tubes|)^{\frac{7}{9}}.
\end{equation}
\end{thm}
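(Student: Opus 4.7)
The plan is to prove this X-ray estimate by running Wolff's hairbrush argument in $\RR^4$ while exploiting the X-ray hypothesis to gain a power of $m$ over mere direction separation. After the standard dyadic pigeonholing, the $L^{3/2}$ bound is equivalent to a lower bound of the form $|E| \gtrsim \delta^{\beta}(\delta^3|\tubes|)^{\gamma} \mu^{-\alpha} m^{-\eta}$ for appropriate exponents, where $E$ is the set on which $f = \sum_{T\in\tubes}\chi_T$ has multiplicity $\sim \mu$; on such a refined set, the identity $\mu|E| \sim \delta^3|\tubes|$ converts an upper bound on $\mu$ into the claimed $L^{3/2}$ estimate.

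First I would execute Wolff's hairbrush on a typical tube $T_0$: the tubes $T\in\tubes$ that meet $T_0$ transversely are contained in the $2\delta$-neighborhood of one of $\sim \delta^{-2}$ planes through the axis of $T_0$, and within each such thickened plane Córdoba's $L^2$ Kakeya estimate in $\RR^2$ controls the multiplicity of the tubes inside the plane. This is exactly the plane-by-plane ingredient that drives the standard Wolff bound $\delta^{-1/3-\eps}$ at dimension $3$ in $\RR^4$.

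The X-ray hypothesis then limits the number of tubes fitting into any fixed thickened plane. In $\RR^4$ the directions of tubes contained in such a $2\delta$-slab lie within $O(\delta)$ of a great circle on $S^3$, so the hypothesis forces $\lesssim m\delta^{-1}$ tubes per plane rather than the $\sim\delta^{-2}$ available from direction separation alone. Feeding this into the Córdoba $L^2$ estimate, summing over the $\sim\delta^{-2}$ planes, and passing from $L^2$ to $L^{3/2}$ by Cauchy--Schwarz and Hölder gives a hairbrush lower bound on $|E|$ that carries a favorable negative power of $m$ compared with Wolff's $\RR^4$ bound.

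The hard part will be extracting precisely the exponents $5/36$ and $7/9$. These unusual fractions strongly suggest that a single hairbrush step is not enough; one likely must interpolate the $m$-improved hairbrush bound against the trivial pointwise bound $f \leq m\delta^{-3}$, or run an induction-on-scales argument in which the X-ray hypothesis is passed to a coarser scale and reused. Balancing the parameters in such an interpolation (or optimizing the induction parameter) to land on exactly these exponents, while tracking the $\eps$-losses from the two-ends reduction, is the bookkeeping I would expect to occupy most of the effort.
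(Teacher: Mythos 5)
This theorem is not proved in the paper; it is quoted verbatim as Theorem~1.2 of the \L{}aba--Tao reference \cite{LT}, and the paper only records a reformulation of it (Corollary \ref{KTCor}). So there is no internal proof for your sketch to be measured against, and the honest assessment is simply whether your sketch would actually establish the result.

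It would not, and you essentially concede this in your final paragraph. Two concrete problems. First, a bookkeeping error: in a fixed $2\delta$-thickened plane through the axis of $T_0$ the admissible directions lie within $O(\delta)$ of a great circle, which is a one-parameter family, so direction separation \emph{alone} already caps the count at $\sim\delta^{-1}$ tubes per plane, not $\delta^{-2}$; the X-ray hypothesis upgrades this to $\lesssim m\delta^{-1}$. This does feed a power of $m$ into the hairbrush, but the single-hairbrush computation (essentially the $\RR^4$ version of Theorem \ref{wolffThm} with the extra factor of $m$ per plane tracked) lands on different exponents of $m$ and of $\delta^3|\tubes|$ than $5/36$ and $7/9$. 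Second, the closing move you float — interpolating the hairbrush bound against the pointwise bound $f\le m\delta^{-3}$ — cannot produce the missing gain: that bound is saturated only by degenerate configurations and is strictly weaker than what the hairbrush already yields on the relevant high-multiplicity set, so the interpolation just returns the hairbrush estimate. What \L{}aba and Tao actually do is substantially more involved than a single hairbrush plus interpolation: they run the hairbrush-type argument at an intermediate scale and iterate, exploiting the additional structure (stickiness/planiness-type information) that the X-ray hypothesis makes available at the coarser scale, and the exponents $5/36$ and $7/9$ emerge from optimizing that two-scale scheme. Your instinct that ``one likely must run an induction-on-scales argument in which the X-ray hypothesis is passed to a coarser scale and reused'' is the right direction, but as written the sketch neither sets up that multi-scale decomposition nor explains how the hypothesis re-enters at the coarse scale, which is the entire content of the \L{}aba--Tao argument. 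If you want to prove this theorem rather than cite it, you would need to reproduce that iteration; a one-shot hairbrush will not get there.
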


It will be more convenient for us to phrase this result as a multiplicity bound on unions of tubes.

\begin{cor}\label{KTCor}
Let $(\tubes,Y)$ be a set of essentially distinct $\delta$-tubes and their associated shading, and let $\Omega\subset S^3$ be a $\delta$-separated set. Define $m = |\tubes|/|\Omega|$. Suppose that each tube from $\tubes$ points in a direction from $\Omega$, and $\sim m$ tubes from $\tubes$ point in each direction from $\Omega$. Suppose $|\tubes_Y(Q)|\leq 10\mu_Y$ for all $Q\in\mathcal{Q}(Y)$. Then 
\begin{equation}\label{muYBound}
\mu_Y \leq C_{\eps} \lambda^{-2} \delta^{-1-\eps}m^{3/4}(\delta^3|\Omega|)^{1/3}.
\end{equation}
\end{cor}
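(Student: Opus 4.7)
The plan is to derive the bound on $\mu_Y$ directly from Theorem \ref{LabaTaoTheorem} by sandwiching the $L^{3/2}$-norm of the multiplicity function $\sum_{T\in\tubes}\chi_{Y(T)}$ between an upper bound coming from \L{}aba--Tao and a lower bound coming from the definition of $\mu_Y$.

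For the upper bound I would observe that every $\delta$-cube $Q\subset Y(T)$ intersects $T$ and hence lies in the 2-fold dilate $2T$, so $\chi_{Y(T)}\leq\chi_{2T}$ pointwise. The family $\{2T:T\in\tubes\}$ is still essentially distinct (up to enlarging the implicit constant), and since $\Omega$ is $\delta$-separated with $\sim m$ tubes per direction, any ball of radius $\delta$ in $S^3$ contains $O(1)$ directions from $\Omega$, hence $O(m)$ tubes with direction within angle $\delta$ of any given vector. Theorem \ref{LabaTaoTheorem} therefore applies and yields
\begin{equation*}
\int_{\RR^4}\Bigl(\sum_{T\in\tubes}\chi_{Y(T)}\Bigr)^{3/2}\leq \int_{\RR^4}\Bigl(\sum_{T\in\tubes}\chi_{2T}\Bigr)^{3/2}\lesssim C_\eps^{3/2}\,\delta^{-1/2-3\eps/2}\,m^{5/24}\,(\delta^3|\tubes|)^{7/6}.
\end{equation*}

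For the lower bound I would apply H\"older's inequality $\|f\|_1\leq\|f\|_{3/2}|\mathcal{Q}(Y)|^{1/3}$ to $f(Q)=|\tubes_Y(Q)|$ on the counting space $\mathcal{Q}(Y)$; since $\|f\|_1=\mu_Y|\mathcal{Q}(Y)|$, this gives $\mu_Y^{3/2}|\mathcal{Q}(Y)|\leq \sum_Q|\tubes_Y(Q)|^{3/2}=\delta^{-4}\int(\sum_T\chi_{Y(T)})^{3/2}$. Setting $|U|=|\bigcup_T Y(T)|=|\mathcal{Q}(Y)|\delta^4$ and using the standard identity $\mu_Y|U|=\sum_T|Y(T)|=\lambda\delta^3|\tubes|=\lambda\delta^3 m|\Omega|$ to eliminate $|\mathcal{Q}(Y)|$ in favor of $\mu_Y$, the lower bound becomes
\begin{equation*}
\mu_Y^{1/2}\,\lambda\,m|\Omega|\delta^3\;\lesssim\;\int_{\RR^4}\Bigl(\sum_{T\in\tubes}\chi_{Y(T)}\Bigr)^{3/2}.
\end{equation*}

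Combining the two bounds with $|\tubes|=m|\Omega|$ gives
$\mu_Y^{1/2}\lambda m|\Omega|\delta^3\lesssim C_\eps^{3/2}\delta^{-1/2-3\eps/2}m^{5/24}(m|\Omega|\delta^3)^{7/6}$, and a short calculation (the exponents of $m$ combine as $\tfrac{5}{24}+\tfrac{7}{6}-1=\tfrac{3}{8}$, of $\delta$ as $-\tfrac{1}{2}-\tfrac{3\eps}{2}+\tfrac{7}{2}-3=-\tfrac{3\eps}{2}$, and of $|\Omega|$ as $\tfrac{7}{6}-1=\tfrac{1}{6}$) reduces this to $\mu_Y^{1/2}\lesssim C_\eps^{3/2}\lambda^{-1}\delta^{-3\eps/2}m^{3/8}|\Omega|^{1/6}$; squaring and rewriting $|\Omega|^{1/3}=\delta^{-1}(\delta^3|\Omega|)^{1/3}$, then renaming $3\eps\mapsto\eps$, produces the claim. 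I expect no serious obstacle: the content is essentially \L{}aba--Tao together with the two standard observations that the shading is controlled by the dilated tube and that H\"older's inequality supplies a sharp-up-to-constants lower bound on the $L^{3/2}$-norm in terms of the average multiplicity. The hypothesis $|\tubes_Y(Q)|\leq 10\mu_Y$ is not strictly needed for the H\"older lower bound, but it ensures the $L^{3/2}$-norm of the multiplicity function is genuinely captured (up to constants) by $\mu_Y^{3/2}|U|$, so that the conclusion phrased in terms of $\mu_Y$ is meaningful.
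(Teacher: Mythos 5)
Your proof is correct, and the overall strategy is the same as the paper's: invoke \L{}aba--Tao (Theorem \ref{LabaTaoTheorem}) for an upper bound on the $L^{3/2}$-norm of the multiplicity function, produce a matching lower bound in terms of $\mu_Y$ and $|\bigcup_T Y(T)|$, and close the loop with the identity $\mu_Y\,|\bigcup_T Y(T)|\sim\lambda_Y\,\delta^3|\tubes|$. The one place you diverge is the lower bound step. The paper uses the hypothesis $|\tubes_Y(Q)|\leq 10\mu_Y$ to pass to a subset $X$ of cubes on which $|\tubes_Y(Q)|\gtrsim\mu_Y$ and with $|X|\mu_Y\gtrsim\lambda_Y\delta^3|\tubes|$, then bounds $\Vert\sum\chi_T\Vert_{3/2}^{3/2}\gtrsim|X|\mu_Y^{3/2}$. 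You instead apply H\"older, $\|f\|_{\ell^1(\mathcal{Q}(Y))}\leq\|f\|_{\ell^{3/2}}\,|\mathcal{Q}(Y)|^{1/3}$, to $f(Q)=|\tubes_Y(Q)|$, obtaining $\mu_Y^{3/2}|\mathcal{Q}(Y)|\leq\sum_Q|\tubes_Y(Q)|^{3/2}$ directly. This is a bit cleaner and, as you observe, does not actually need the hypothesis $|\tubes_Y(Q)|\leq 10\mu_Y$: H\"older gives the sharp lower bound on the $L^{3/2}$-mass from the $L^1$-mass alone, whereas the paper uses the upper bound on multiplicity to locate the mass on a large set of comparable multiplicity. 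Both routes give the same bookkeeping and the same exponents (and your handling of the $\chi_{Y(T)}\leq\chi_{2T}$ dilation is the right way to pass from the shading to the tube, a step the paper abbreviates with a ``$\sim$''). The arithmetic in the combination step checks out.
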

\begin{proof}
Since $|\tubes_Y(Q)|\leq 10\mu_Y$ for all $Q\in\mathcal{Q}(Y)$, there exists a set $X \subset \bigcup_{T\in\tubes}Y(T)$ that is a union of $\delta$-cubes, with $|X|\mu_Y\gtrsim \lambda_Y(\delta^3|\tubes|)$ so that $|\tubes_Y(Q)|\gtrsim \mu_Y$ for each cube $Q\subset X$. Applying Theorem \ref{LabaTaoTheorem} with $\eps/2$ in place of $\eps$, we conclude that there exists a constant $C=C(\eps)$ so that
\begin{equation*}
|X|^{2/3}\mu_Y \sim \Big\Vert \sum_{T\in\tubes} \chi_T \Big\Vert_{3/2}\leq C \delta^{-\frac{1}{3}-\eps/3}m^{\frac{5}{36}}(\delta^3|\tubes|)^{\frac{7}{9}},
\end{equation*}
and thus
\begin{equation*}
\begin{split}
\mu_Y &\leq C^3 \delta^{-1-\eps}m^{\frac{5}{12}}(\delta^3|\tubes|)^{\frac{1}{3}}\\
&= C^3 \delta^{-1-\eps}m^{\frac{3}{4}}(\delta^3|\Omega|)^{\frac{1}{3}}.\qedhere
\end{split}
\end{equation*}
\end{proof}

\subsection{Guth-Zahl's Trilinear Kakeya estimate}
In \cite{GZ}, Guth and the second author proved a trilinear Kakeya-type estimate for collections of $\delta$-tubes satisfying the generalized Wolff axioms. A set of tubes is said to satisfy the Wolff axioms if not too many tubes from the set can concentrate into the thickened neighborhood of an affine subspace. A set of tubes is said to satisfy the generalized Wolff axioms if not too many tubes from the set can concentrate into the thickened neighborhood of an algebraic variety, or more generally a semi-algebraic set. In \cite{Z} and \cite{KR}, the second author, and independently Rogers and the first author showed that collections of direction-separated tubes satisfy these requirements\footnote{In \cite{Z}, the second author proved a slightly weaker statement, but this statement is nonetheless sufficient for what follows.}. By combining the results of \cite{GZ} with \cite{KR} or \cite{Z}, we obtain the following trilinear Kakeya-type bound.

\begin{thm}[Trilinear Kakeya in $\RR^4$]\label{GZTrilinearProp}
For each $\epsilon>0$, there exists a constant $C_\epsilon$ so that the following holds. Let $\tubes$ be a set of $\delta$-tubes in $\RR^4$ that point in $\delta$-separated directions. Then
\begin{equation}\label{trilinearBoundFromGZ}
\int\Big(\sum_{T_1,T_2,T_3\in\tubes}\chi_{T_1}\ \chi_{T_2}\ \chi_{T_3}\ |v_1\wedge v_2\wedge v_3|^{12/13}\Big)^{13/27} \leq C_{\epsilon}\delta^{-1/3-\epsilon}(\delta^3|\tubes|)^{4/3},
\end{equation}
where in the above expression $v_i$ is the direction of the tube $T_i$.
\end{thm}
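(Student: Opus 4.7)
The plan is to combine the trilinear Kakeya inequality of Guth and Zahl \cite{GZ}, which is proved for families of $\delta$-tubes satisfying the \emph{generalized Wolff axioms}, with the results of \cite{Z} and \cite{KR}, which assert that any direction-separated family of $\delta$-tubes in $\RR^4$ satisfies these axioms with constants that are uniform in $\delta$. In other words, the theorem as stated is obtained by applying a purely combinatorial/geometric input (the Wolff-axiom verification) and then feeding it into the already-proved analytic output (the trilinear estimate).

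More precisely, the generalized Wolff axioms demand that for every semi-algebraic set $Z \subset \RR^4$ of bounded complexity $D$, the number of tubes of $\tubes$ essentially contained in the $\delta$-neighborhood $N_\delta(Z)$ is controlled by $|N_\delta(Z)|$ times an appropriate normalization, with constants depending only on $D$. The main theorem of \cite{GZ} states that if $\tubes$ is any family of $\delta$-tubes (not necessarily direction-separated) that satisfies these axioms, then
\begin{equation*}
\int\Big(\sum_{T_1,T_2,T_3\in\tubes}\chi_{T_1}\chi_{T_2}\chi_{T_3}|v_1\wedge v_2 \wedge v_3|^{12/13}\Big)^{13/27}\leq C_\epsilon \delta^{-1/3-\epsilon}(\delta^3|\tubes|)^{4/3}.
\end{equation*}
So once the axioms are verified, the desired bound is immediate.

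The verification of the axioms for direction-separated families is exactly the content of \cite{KR} (and of the relevant part of \cite{Z}). The proof proceeds by noting that if many direction-separated tubes cluster into the $\delta$-neighborhood of a semi-algebraic set of bounded complexity, then many $\delta$-separated directions in $S^3$ must cluster into the $O(\delta)$-neighborhood of a related variety on the sphere, which can be ruled out by a direction-counting argument based on polynomial partitioning or cell decompositions. I would simply cite this as a black box.

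The step I expect to require the most care is the bookkeeping between the two references: as flagged in the footnote of the excerpt, the precise form of the Wolff axioms proved in \cite{Z} is slightly weaker than the hypothesis appearing in \cite{GZ}, so one should either appeal to \cite{KR} for the exact formulation required or verify that the weaker form in \cite{Z} still suffices to run the Guth-Zahl argument. Beyond this, the proof is entirely a citation and assembly, so there is no genuine analytic obstacle to overcome at this stage.
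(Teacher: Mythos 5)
Your proposal matches the paper exactly: the paper itself does not give a freestanding proof of this theorem but derives it by combining the trilinear Kakeya estimate of Guth--Zahl \cite{GZ} for tube families satisfying the generalized Wolff axioms with the verification in \cite{Z} and \cite{KR} that direction-separated families satisfy those axioms, including the same caveat (noted in the paper's footnote) that the formulation in \cite{Z} is slightly weaker but still sufficient. There is nothing further to compare; your assembly is the intended argument.
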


\begin{cor}\label{trilinearVolumeBound}
For each $\epsilon>0$, there exists a constant $C_\epsilon$ so that the following holds. Let $(\tubes,Y)$ be a set of $\delta$-tubes in $\RR^4$ that point in $\delta$-separated directions and their associated shading. Suppose that for each $Q\in\mathcal{Q}_Y(T)$, we have 
\begin{equation*}
|\{(T_1,T_2,T_3)\in\tubes_Y(Q)\colon |v_1\wedge v_2\wedge v_3|\geq\theta\}|\geq s|\tubes_Y(Q)|^3.
\end{equation*}
Then
\begin{equation}\label{volumeBoundTrilinear}
\Big|\bigcup_{T\in\tubes}Y(T)\Big|\geq c_{\eps}\delta^{\eps} s^{9/4} \lambda_Y^{13/4}\delta^{3/4}\theta(\delta^3|\tubes|)^{1/4}.
\end{equation}
\end{cor}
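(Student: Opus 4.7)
My approach is to combine the trilinear Kakeya estimate (Theorem \ref{GZTrilinearProp}) with a pointwise lower bound on the integrand coming from the transversality hypothesis, and then convert the resulting inequality into a lower bound on $|U|:=\Big|\bigcup_{T\in\tubes}Y(T)\Big|$ using the identity $|U|\mu_Y=\lambda_Y\delta^3|\tubes|$, which follows directly from the definitions of $\mu_Y,\lambda_Y,$ and $\mathcal{Q}(Y)$.

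First I would perform a standard $|\log\delta|^{-1}$-refinement of $(\tubes,Y)$ so that $|\tubes_Y(Q)|\sim \mu_Y$ for every $Q\in\mathcal{Q}(Y)$; a harmless constant-factor thickening of the tubes then lets us treat every cube $Q\subset Y(T)$ as contained in $T$, so that $\chi_T\equiv 1$ on $Q$. For each such $Q$, the hypothesis produces at least $s|\tubes_Y(Q)|^3\sim s\mu_Y^3$ ordered triples $(T_1,T_2,T_3)\in\tubes_Y(Q)^3$ with $|v_1\wedge v_2\wedge v_3|\geq\theta$, so for every $x\in Q$,
\begin{equation*}
\sum_{T_1,T_2,T_3\in\tubes}\chi_{T_1}(x)\chi_{T_2}(x)\chi_{T_3}(x)|v_1\wedge v_2\wedge v_3|^{12/13}\gtrsim s\mu_Y^3\theta^{12/13}.
\end{equation*}

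Raising to the $13/27$ power and integrating, with $|Q|=\delta^4$ and $|U|=\delta^4|\mathcal{Q}(Y)|$, produces $|U|\,s^{13/27}\mu_Y^{13/9}\theta^{4/9}$ as a lower bound for the integral appearing on the left of \eqref{trilinearBoundFromGZ}. Pairing this with the upper bound from Theorem \ref{GZTrilinearProp} gives
\begin{equation*}
|U|\,s^{13/27}\mu_Y^{13/9}\theta^{4/9}\leq C_\eps\delta^{-1/3-\eps}(\delta^3|\tubes|)^{4/3}.
\end{equation*}
Substituting $\mu_Y=\lambda_Y\delta^3|\tubes|/|U|$ and solving the resulting single-variable inequality for $|U|$ yields
\begin{equation*}
|U|\gtrsim c_\eps\delta^{3/4+(9/4)\eps}\lambda_Y^{13/4}(\delta^3|\tubes|)^{1/4}s^{13/12}\theta,
\end{equation*}
and since $s\leq 1$ we have $s^{13/12}\geq s^{9/4}$, so relabeling $\eps$ produces \eqref{volumeBoundTrilinear}. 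I do not expect a serious obstacle in this argument: the only bookkeeping issue is the reconciliation of the tube indicator with the cube-in-shading condition, which the constant-factor thickening absorbs into the implicit constants.
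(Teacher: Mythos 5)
Your argument is correct and follows essentially the same route as the paper's proof: refine so that $|\tubes_Y(Q)|\sim\mu_Y$, use the hypothesis on each cube to bound the integrand of Theorem~\ref{GZTrilinearProp} from below, integrate, and solve for $\big|\bigcup_{T\in\tubes}Y(T)\big|$ after substituting $\mu_Y\sim\lambda_Y(\delta^3|\tubes|)/\big|\bigcup Y(T)\big|$. The only (cosmetic) difference is that you carry the $s$-dependence sharply as $s^{-13/27}$ when raising to the $13/27$ power, arriving at $s^{13/12}$ before weakening to $s^{9/4}$, whereas the paper applies the cruder bound $s^{-13/27}\leq s^{-1}$ at the outset and lands directly on $s^{9/4}$; both are valid since the statement only asserts the weaker exponent.
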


\begin{proof}
After refining $(\tubes,Y)$, we can suppose that $|\tubes_Y(Q)|\sim\mu_Y$ for each $Q\in\mathcal{Q}_Y(T)$. Then
\begin{equation}
\begin{split}
&\delta^4|\mathcal{Q}(Y)|\big(\mu_Y^3\theta^{12/13}\big)^{13/27}\\
&\sim \sum_{Q\in\mathcal{Q}_Y(T)}|Q|\ \big(|\tubes_Y(Q)|^3\ \theta^{12/13}\big)^{13/27}\\
&\sim \sum_{Q\in\mathcal{Q}_Y(T)}|Q|\ s^{-1}\big(|\{T_1,T_2,T_3\in\tubes_Y(Q)\colon |v_1\wedge v_2\wedge v_3|\sim\theta\}|\ \theta^{12/13}\big)^{13/27}\\
&\leq s^{-1}\sum_{Q\in\mathcal{Q}_Y(T)}|Q| \big(\sum_{T_1,T_2,T_3\in\tubes_Y(Q)} |v_1\wedge v_2\wedge v_3|^{12/13}\big)^{13/27}\\
&\leq s^{-1}\int\Big(\sum_{T_1,T_2,T_3\in\tubes}\chi_{T_1}\ \chi_{T_2}\ \chi_{T_3}\ |v_1\wedge v_2\wedge v_3|^{12/13}\Big)^{13/27}\\
&\leq C_{\epsilon}s^{-1}\delta^{-1/3-\epsilon}(\delta^3|\tubes|)^{4/3}.
\end{split}
\end{equation}
Rearranging, we get
\begin{equation*}
\mu_Y^{13}\leq C_{\eps}^9 s^{-9} \delta^{-3-9\eps}\theta^{-4}(\delta^4|\mathcal{Q}(Y)|)^{-9}(\delta^3|\tubes|)^{12}.
\end{equation*}
Since $\mu_Y|\bigcup_{T\in\tubes}Y(T)|\approx \lambda_Y (\delta^3|\tubes|)$ and $\delta^4|\mathcal{Q}(Y)|\sim |\bigcup_{T\in\tubes}Y(T)|$, we have
\begin{equation*}
\lambda_Y^{-13}(\delta^3|\tubes|)^{13}|\bigcup_{T\in\tubes}Y(T)|^{-13}\leq C_{\eps}^9s^{-9}\delta^{-3-9\eps}\theta^{-4}|\bigcup_{T\in\tubes}Y(T)|^{-9}(\delta^3|\tubes|)^{12}.
\end{equation*}
Re-arranging, we obtain \eqref{volumeBoundTrilinear}.
\end{proof}

Equivalently, we get a pointwise bound
\begin{equation}\label{trilinearPointwise}
|\tubes_Y(Q)|\lessapprox \lambda^{-9/4}s^{-9/4}\theta^{-1}\delta^{-3/4}(\delta^3|\tubes|)^{3/4}\quad\textrm{for all}\ Q\in\mathcal{Q}(Y).
\end{equation}

Corollary \ref{trilinearVolumeBound} gives us a good bound if $\theta$ is large. If $\theta$ is small, then for a typical cube $Q\in\mathcal{Q}(Y)$, most of the tubes intersecting $Q$ will either all point in roughly the same direction, or they will be contained in the thin neighborhood of a plane. Section \ref{planebrushSection} will be devoted to handling this type of situation.

\section{Quantitative transversality}


\subsection{Concentration and non-concentration in planes}

\begin{defn}[Planyness]
Let $(\tubes,Y)$ be a set of $\delta$-tubes and their associated shading. We say that $(\tubes,Y)$ is plany if for each $\delta$-cube $Q$, there is a plane $\Pi(Q)$ so that for all $T\in\tubes_Y(Q)$ we have $\angle(v(T),\Pi(Q))\lesssim\delta$.
\end{defn}

\begin{defn}
Let $(\tubes,Y)$ be a set of plany $\delta$-tubes and their associated shading. Let $\delta\leq p \leq 1$. We say that $(\tubes,Y)$ is $(\eps_3,C_3)$-robustly contained in the $p$ neighborhood of planes if for each tube $T\in\tubes$, there exists a plane $\Pi(T)$ containing the line coaxial with $T$ so that 
\begin{itemize}
\item $\angle(\Pi(Q),\Pi(T))\leq p$ for each $Q\subset Y(T)$.
\item For every plane $\Pi$ containing the line coaxial with $T$ and for every $s>\delta$, we have
\begin{equation}\label{pConcentrationInPlanes}
|\{Q\subset Y(T)\colon \angle(\Pi(Q),\Pi)\leq s \}| \leq (s/p)^{\eps_3}C_3 \lambda_Y \delta^{-1}.
\end{equation}
\end{itemize}
\end{defn}

\begin{lem}\label{2ends2PlaneLem}
Let $(\tubes,Y)$ be a set of plany $\delta$-tubes and their associated shading. Let $\eps_3>0$. Then there is a number $\delta\leq p \leq 1$ and a $\lessapprox\delta^{\eps_3}$-refinement $(\tubes^\prime,Y^\prime)$ of $(\tubes,Y)$ that is $(\eps_3,100)$-robustly contained in the $p$ neighborhood of planes.
\end{lem}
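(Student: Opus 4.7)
The plan is to adapt the two-ends machinery of Lemma \ref{twoEndsBuildingBlock} to an ``angular'' metric attached to each tube. After a $\sim 1$-refinement I may assume $|Y(T)|/|T| \sim \lambda_Y$ for each $T\in\tubes$, so in particular the number $|X_T|:=|\{Q\colon Q\subset Y(T)\}|$ is at most $\lesssim \lambda_Y\delta^{-1}\le\delta^{-1}$. For each $T\in\tubes$ and each $Q\subset Y(T)$, the planyness hypothesis gives $\angle(v(T),\Pi(Q))\lesssim\delta$, so I can replace $\Pi(Q)$ by the unique plane $\tilde\Pi_T(Q)$ containing the line coaxial with $T$ that is closest to $\Pi(Q)$; this perturbs $\Pi(Q)$ by angle $O(\delta)$, which is harmless on scales $\ge\delta$. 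Equip $X_T$ with the pseudo-metric $d_T(Q,Q')=\angle(\tilde\Pi_T(Q),\tilde\Pi_T(Q'))$. Each $(X_T,d_T)$ is then a finite (pseudo-)metric space of diameter $\lesssim 1$ and cardinality at most $N=\delta^{-1}$.

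Next I would apply Lemma \ref{twoEndsBuildingBlock} to the family $\{(X_T,d_T)\}_{T\in\tubes}$ with $\eps_3$ in place of $\eps$, obtaining a common radius $p\in[\delta,1]$, an index set $I\subset\tubes$ with $|I|\gtrsim|\tubes|/\log^2\delta^{-1}$, and a distinguished cube $Q_T\in X_T$ for each $T\in I$. Define $\tubes'=I$; let $\Pi(T)=\tilde\Pi_T(Q_T)$ (which contains the axis of $T$ by construction); and set
\begin{equation*}
Y'(T)=\bigcup\bigl\{Q\subset Y(T)\colon \angle(\Pi(Q),\Pi(T))\le p\bigr\}.
\end{equation*}
The first bullet of Lemma \ref{twoEndsBuildingBlock}, combined with the uniform shading density and $|X_T|\le\delta^{-1}$, gives $|Y'(T)|\gtrsim \delta^{\eps_3}|Y(T)|$ for each $T\in\tubes'$; summing over $\tubes'$ and accounting for the $\log^2\delta^{-1}$ loss in the tube count yields a $\lessapprox\delta^{\eps_3}$-refinement. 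One additional $\sim 1$-refinement ensures $|Y'(T)|/|T|\le 10\lambda_{Y'}$ for every $T\in\tubes'$.

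Finally I would check the two bullets of robust containment. The first bullet holds by construction. For the non-concentration bound \eqref{pConcentrationInPlanes}, fix $T\in\tubes'$, a plane $\Pi$ through the axis of $T$, and a scale $s>\delta$. If no $Q'\in X_T$ lies within $d_T$-distance $s$ of $\Pi$, the count is zero; otherwise fix such a $Q'$, so that every $Q\subset Y'(T)$ with $\angle(\Pi(Q),\Pi)\le s$ lies in the $d_T$-ball of radius $2s$ about $Q'$ (with the $O(\delta)$ correction absorbed since $s\ge\delta$). The second bullet of Lemma \ref{twoEndsBuildingBlock} then gives
\begin{equation*}
|\{Q\subset Y'(T)\colon \angle(\Pi(Q),\Pi)\le s\}|\le 8(2s/p)^{\eps_3}\,|X_T\cap B(Q_T,p)|\le 80(s/p)^{\eps_3}\lambda_{Y'}\delta^{-1},
\end{equation*}
after absorbing the factor $2^{\eps_3}$ at the cost of slightly shrinking $\eps_3$. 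Since $80<100$, this establishes the $(\eps_3,100)$-robust containment.

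I anticipate no deep obstacle; the only delicate point is the mismatch between $\Pi(Q)$ (which merely makes angle $O(\delta)$ with $v(T)$) and the plane $\Pi(T)$ (which must contain the axis of $T$ exactly), handled by the passage to $\tilde\Pi_T(Q)$ above. Everything else is careful bookkeeping to route absolute constants into the exponent $\eps_3$ and into the logarithmic slack implicit in the notation $\lessapprox$.
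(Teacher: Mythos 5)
Your proposal is correct and follows essentially the same strategy as the paper's: you perform a two-ends-type pigeonholing in the Grassmannian of planes coaxial with each tube, extract a common homogenizing scale $p$, take $\Pi(T)$ to be the center of the chosen ball, and verify the non-concentration bound using the homogeneity of the ball. The only packaging difference is that you route the argument through the already-proved Lemma~\ref{twoEndsBuildingBlock} applied to the pseudo-metric $d_T(Q,Q')=\angle(\tilde\Pi_T(Q),\tilde\Pi_T(Q'))$, whereas the paper defines $p(T)$ directly as the scale achieving
\begin{equation*}
\sup_{\delta\le s\le 1}\ \sup_{\Pi\ni\,\text{axis}(T)}\ s^{-\eps_3}\,|\{Q\subset Y(T)\colon\angle(\Pi,\Pi(Q))\le s\}|
\end{equation*}
and then dyadically pigeonholes on $p(T)$; these are the same computation, and your extra step comparing balls centered at arbitrary coaxial $\Pi$ with balls centered at points of $X_T$ is exactly what bridges the two formulations. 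One small caution: your final arithmetic $8\cdot(2s/p)^{\eps_3}\cdot 10\,\lambda_{Y'}\delta^{-1}\le 100(s/p)^{\eps_3}\lambda_{Y'}\delta^{-1}$ relies on the factor $2^{\eps_3}$ being close to $1$, and the ``$\sim 1$-refinement'' enforcing $|Y'(T)|/|T|\sim\lambda_{Y'}$ in fact costs a dyadic pigeonhole (a $\lessapprox 1$-refinement, as in the paper's step), not a true $\sim 1$-refinement; neither issue is fatal since the logarithmic loss is absorbed into $\lessapprox$ and the constant $100$ can be met by tightening the shading-density bound, but the bookkeeping should be stated a little more carefully.
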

\begin{proof}
For each $T\in\tubes$, define $p(T)$ to be the largest value of $s$ achieving the supremum below:
\begin{equation*}
 \sup_{\delta\leq s\leq 1} \sup_{\substack{\Pi\in\operatorname{Grass}(2;4)\\ T\ \textrm{coaxial with}\ \Pi}}
\frac{|\{Q\subset Y(T)\colon \angle(\Pi,\Pi(Q))\leq s\}|}{s^{\eps_3}}.
\end{equation*}
Setting $s=1$ we have 
\begin{equation*}
\sup_{\substack{\Pi\in\operatorname{Grass}(2;4)\\ T\ \textrm{coaxial with}\ \Pi}}
\frac{|\{Q\subset Y(T)\colon \angle(\Pi,\Pi(Q))\leq 1\}|}{1^{\eps_3}}=|\{Q\colon Q\subset Y(T)\}|,
\end{equation*}
and thus
\begin{equation*}
\sup_{\substack{\Pi\in\operatorname{Grass}(2;4)\\ T\ \textrm{coaxial with}\ \Pi}}
|\{Q\subset Y(T)\colon \angle(\Pi,\Pi(Q))\leq p(T) \}| \gtrsim p(T)^{\eps_3}|\{Q\colon Q\subset Y(T)\}|.
\end{equation*}

Let $\Pi(T)$ be a plane coaxial with $T$ that satisfies
\begin{equation*}
|\{Q\subset Y(T)\colon \angle(\Pi(T),\Pi(Q))\leq p(T)\}| \geq \frac{1}{2}p(T)^{\eps_3}|\{Q\colon Q\subset Y(T)\}|.
\end{equation*}
Define 
\begin{equation*}
Y_1(T)=\bigcup \{Q\subset Y(T)\colon \angle(\Pi(T),\Pi(Q))\leq p(T)\}.
\end{equation*}
Observe that $\sum_{T\in\tubes}|Y_1(T)|\gtrapprox \delta^{\eps_3}\sum_{T\in\tubes}|Y(T)|.$

After dyadic pigeonholing, we can select a number $p$ so that
\begin{equation*}
\sum_{\substack{T\in\tubes\\ p(T)\sim p}}|Y_1(T)|\gtrapprox \delta^{\eps_3}\sum_{T\in\tubes}|Y(T)|.
\end{equation*}
Finally, we can select a refinement $(\tubes^\prime,Y^\prime)$ of $(\tubes,Y_1)$ so that $|Y^\prime(T)|/|T|\sim\lambda_{Y^\prime}$ for each $T\in\tubes^\prime$. We thus have that $(\tubes^\prime,Y^\prime)$ is a $\approx\delta^{\eps_3}$-refinement of $(\tubes,Y)$; for each $T\in\tubes^\prime$ there is a plane $\Pi(T)$ so that $\angle(\Pi(Q),\Pi(T))\leq p$ for each $Q\subset Y^\prime(T)$; and for each plane $\Pi$ coaxial with $T$, we have the estimate
\begin{equation*}
|\{Q\subset Y^\prime(T)\colon \angle(\Pi(Q),\Pi)\leq s\}|\leq 100 (s/p)^{\eps_3}\lambda_{Y^\prime}\delta^{-1}.\qedhere
\end{equation*}
\end{proof}

If a set of tubes is robustly contained in planes, then this set of tubes breaks into non-interacting pieces.

\begin{lem}\label{containedInTwoPlanesDisjointness}
Let $(\tubes,Y)$ be a set of tubes that is $(\eps_3,100)$-robustly contained in the $p$ neighborhood of planes. Then there is a number $t\gtrsim 1$ and a $t$-refinement $(\tubes^\prime,Y)$ of $(\tubes,Y)$ so that $\tubes^\prime$ admits a partition
\begin{equation*}
\tubes^\prime=\bigsqcup_{i=1}^K \tubes_i
\end{equation*}
satisfying the following properties.
\begin{itemize}
\item For each index $i$, the tubes in $\tubes_i$ are contained in the $\lesssim p$ neighborhood of a plane $\Pi_i$. For each $T\in\tubes_i$, we have that $\angle(\Pi(T),\Pi_i)\leq p$.
\item If $T,T^\prime\in\tubes^\prime$ with $Y(T)\cap Y(T^\prime)\neq\emptyset$, then $T$ and $T^\prime$ are contained in the same set $\tubes_i$.
\end{itemize}
\end{lem}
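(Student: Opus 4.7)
The crucial geometric fact is the following: if $T, T^\prime \in \tubes$ satisfy $Y(T) \cap Y(T^\prime) \neq \emptyset$, then for any $\delta$-cube $Q \subset Y(T) \cap Y(T^\prime)$, the plane $\Pi(Q)$ makes angle at most $p$ with both $\Pi(T)$ and $\Pi(T^\prime)$, since $(\tubes, Y)$ is $(\eps_3, 100)$-robustly contained in the $p$-neighborhood of planes. By the triangle inequality on $\operatorname{Grass}(2;4)$, this yields $\angle(\Pi(T), \Pi(T^\prime)) \leq 2p$. Thus the relation ``shadings intersect'' already forces two tubes to be close in the Grassmannian.

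The plan is to build the partition by cellularizing $\operatorname{Grass}(2;4)$ at a scale comparable to $p$, while using a constant-factor refinement to discard tubes near cell boundaries. Concretely, I would cover the Grassmannian by a bounded number of coordinate charts and tile each chart by a cubical grid at scale $Rp$ for a large absolute constant $R$ to be chosen. A random shift of the grid produces a partition $\{C_i\}$ of $\operatorname{Grass}(2;4)$, and I call a tube $T$ \emph{good} if $\Pi(T)$ lies at distance at least $3p$ from the boundary of the cell $C_{i(T)}$ containing it. By averaging over shifts, for some choice of shift we have $\sum_{\text{good }T}|Y(T)| \geq \bigl(1 - O(1/R)\bigr)\sum_{T \in \tubes}|Y(T)|$, and choosing $R$ large gives a $\tfrac{1}{2}$-refinement $\tubes^\prime$ consisting of good tubes.

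Define $\tubes_i := \{T \in \tubes^\prime : \Pi(T) \in C_i\}$, yielding a genuine partition $\tubes^\prime = \bigsqcup_i \tubes_i$. Choosing $\Pi_i$ to be the center of $C_i$, every $T \in \tubes_i$ satisfies $\angle(\Pi(T), \Pi_i) \lesssim Rp$, which establishes the first bullet after absorbing $R$ into the implicit constant in ``$\lesssim p$'' (the exact inequality $\leq p$ in the second sentence may be arranged either by rescaling $p$ by a bounded factor or by subdividing each cell into sub-cells of diameter at most $p$, which preserves the partition structure). For the second bullet, if $T, T^\prime \in \tubes^\prime$ satisfy $Y(T) \cap Y(T^\prime) \neq \emptyset$, the geometric observation gives $\angle(\Pi(T), \Pi(T^\prime)) \leq 2p$; since $T$ is good, $\Pi(T)$ lies at distance greater than $3p > 2p$ from $\partial C_{i(T)}$, forcing $\Pi(T^\prime) \in C_{i(T)}$ as well, and hence $T^\prime \in \tubes_{i(T)}$.

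The only real technical obstacle is making the random-shift averaging precise on the curved Grassmannian; this is routine because $\operatorname{Grass}(2;4)$ is compact and, at the small scale $p \leq 1$, locally bi-Lipschitz to Euclidean space, so one can work chart by chart with bounded distortion. Notably, the non-concentration estimate \eqref{pConcentrationInPlanes} is not used in this argument; what makes the partition succeed is purely the triangle inequality applied to the first bullet of the robust-plane-containment hypothesis, combined with the random cell construction that ensures most tubes are safely interior.
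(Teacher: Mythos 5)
You correctly identify the key geometric observation — that $Y(T) \cap Y(T') \neq \emptyset$ forces $\angle(\Pi(T), \Pi(T')) \leq 2p$ via the triangle inequality — and you are right that the non-concentration estimate \eqref{pConcentrationInPlanes} plays no role here. But there is a genuine gap in \emph{where} you cellularize. You work in $\operatorname{Grass}(2;4)$, the space of linear $2$-subspaces, which remembers only the direction of $\Pi(T)$ and forgets its position in $\RR^4$. This cannot deliver the first half of the first bullet, which asserts that the tubes in each $\tubes_i$ lie in the $\lesssim p$ neighborhood of a single \emph{affine} plane $\Pi_i$. In your partition, two tubes $T, T'$ whose planes $\Pi(T), \Pi(T')$ are parallel (or even identical as linear subspaces) but translated far apart would land in the same cell $C_i$, yet no affine plane has a $p$-neighborhood containing both. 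Relatedly, your choice $\Pi_i = $ center of $C_i$ is a subspace through the origin and cannot serve as the required affine plane when the tubes in $\tubes_i$ are far from the origin.

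The paper avoids this by working in the space $\mathcal{G}$ of affine $2$-planes in $\RR^4$ meeting the unit ball, with the metric $\dist(\Pi, \Pi') = \angle(\Pi, \Pi') + \dist\big(\Pi \cap B(0,1),\ \Pi' \cap B(0,1)\big)$, combining the angular distance you used with the Hausdorff distance of the planes' intersections with $B(0,1)$. The additional observation — missing from your argument — is that $Y(T) \cap Y(T') \neq \emptyset$ also forces $\Pi(T)$ and $\Pi(T')$ to pass within $\lesssim \delta \leq p$ of a common $\delta$-cube, so the \emph{full} metric distance satisfies $\dist(\Pi(T), \Pi(T')) \leq 2p$, not just its angular component. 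One then covers $\{\Pi(T)\}$ by $2p$-balls in $(\mathcal{G},\dist)$ and selects a subcollection whose $4p$-dilates are disjoint while capturing a $\gtrsim 1$ fraction of $\sum_T |Y(T)|$; tubes not captured are discarded. Your random-shift ``good tube'' device is a perfectly valid alternative to this greedy/pigeonholing step, but it must be carried out on $(\mathcal{G}, \dist)$ rather than on the Grassmannian, and the cell center should be taken as an affine plane meeting the relevant region.
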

\begin{proof}
By the triangle inequality, if $T,T^\prime\in\tubes$ with $Y(T)\cap Y(T^\prime)\neq\emptyset$, then
\begin{equation*}
\angle(\Pi(T),\Pi(T^\prime))\leq \angle(\Pi(T),\Pi(Q))+\angle(\Pi(T^\prime),\Pi(Q))\leq 2p,
\end{equation*}
where $Q$ is any cube contained in $Y(T)\cap Y(T^\prime)$. Let $\mathcal{G}$ be the set of all affine planes in $\RR^4$ that intersect the unit ball. Define 
\begin{equation*}
\operatorname{dist}(\Pi,\Pi^\prime)=\angle(\Pi,\Pi^\prime)+\operatorname{dist}\big(\Pi\cap B(0,1),\ \Pi^\prime\cap B(0,1)\big).
\end{equation*}
$\operatorname{dist}(\cdot,\cdot)$ defines a metric on $\mathcal{G}$. Note that if $Y(T)\cap Y(T^\prime) \neq \emptyset$, then $\operatorname{dist}(\Pi(T),\Pi(T^\prime))\leq 2p$. Consider the set of all balls of radius $p$ contained in $\mathcal{G}$. Select a set of balls $B_1,\ldots,B_K$ of radius $2p$ so that the 2-fold dilates of these balls are disjoint, and
\begin{equation*}
\sum_{i=1}^K \sum_{\substack{ T\in\tubes \\ \Pi(T)\in B_i }}|Y(T)|\gtrsim \sum_{T\in\tubes}|Y(T)|.
\end{equation*}
Define $\tubes_i=\{T\in\tubes\colon \Pi(T)\in B_i\}$. Since $\operatorname{dist}(\Pi(T),\Pi(T^\prime))>2p$ and thus $Y(T)\cap Y^\prime(T))=\emptyset$ whenever $T$ and $T^\prime$ come from different sets $\tubes_i$, we have that the sets $\{\tubes_i\}_{i=1}^K$ are disjoint. Define $\tubes^\prime=\bigcup_{i=1}^K\tubes_i$. We have that if $T,T^\prime\in\tubes^\prime$ with $Y(T)\cap Y(T^\prime)\neq\emptyset$, then $T$ and $T^\prime$ must be contained in the same set $\tubes_i$.
\end{proof}
Note that since $\angle(\Pi(T),\Pi_i)\leq p$ for each $T\in\tubes_i$, we might as well take $\Pi(T)=\Pi_i$; after we do this then each set $\tubes_i$ will be $(\eps_3,2^{1/\eps_3}100)$-robustly contained in the $2p$ neighborhood of planes. Combining the above two results, we obtain the following.
\begin{cor}\label{disjoint2PlanePieces}
Let $(\tubes,Y)$ be a set of plany $\delta$-tubes and their associated shading. Let $\eps_3>0$. Then there is a number $\delta \leq p \leq 1$ and a $\approx \delta^{\eps_3}$-refinement $(\tubes^\prime,Y^\prime)$ of $(\tubes,Y)$ so that $\tubes^\prime$ admits a partition  
\begin{equation*}
\tubes^\prime=\bigsqcup_{i=1}^K \tubes_i,
\end{equation*}
where
\begin{itemize}
\item For each index $i$ the tubes in $\tubes_i$ are $(\eps_3,C_3)$-robustly contained in the $p$ neighborhood of planes, where $C_3 = 2^{1/\eps_3}100$.
\item For each index $i$, the tubes in $\tubes_i$ are contained in the $\lesssim p$ neighborhood of a plane $\Pi_i$. For each $T\in\tubes_i$, we have $\Pi(T)=\Pi_i$.
\item If $T,T^\prime\in\tubes^\prime$ with $Y(T)\cap Y(T^\prime)\neq\emptyset$, then $T$ and $T^\prime$ are contained in the same set $\tubes_i$.
\end{itemize} 
\end{cor}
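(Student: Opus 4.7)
The plan is to assemble the Corollary by applying Lemma \ref{2ends2PlaneLem} and Lemma \ref{containedInTwoPlanesDisjointness} in sequence, and then to overwrite the distinguished plane $\Pi(T)$ attached to each tube with the common plane $\Pi_i$ of its cluster, absorbing the resulting losses into $C_3$.

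First I would apply Lemma \ref{2ends2PlaneLem} to $(\tubes, Y)$ with parameter $\eps_3$. This produces a scale $p_0 \in [\delta, 1]$ and a $\approx \delta^{\eps_3}$-refinement $(\tubes_1, Y_1)$ of $(\tubes, Y)$ that is $(\eps_3, 100)$-robustly contained in the $p_0$-neighborhood of planes, equipped with coaxial planes $\tilde{\Pi}(T)$ satisfying both bullets of the definition. I would then feed $(\tubes_1, Y_1)$ into Lemma \ref{containedInTwoPlanesDisjointness}, obtaining a $\gtrsim 1$-refinement $(\tubes', Y')$ (so the composition is still a $\approx \delta^{\eps_3}$-refinement of $(\tubes, Y)$) together with the partition $\tubes' = \bigsqcup_{i=1}^{K} \tubes_i$ and planes $\Pi_i$ demanded by the Corollary. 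That lemma also supplies $\angle(\tilde{\Pi}(T), \Pi_i) \leq p_0$ for each $T \in \tubes_i$ and the disjointness of shadings across distinct clusters, which are exactly the second and third bullets of the conclusion (with the final $p$ taken to be a constant multiple of $p_0$).

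The last step is pure bookkeeping. For each $i$ and each $T \in \tubes_i$ I set $\Pi(T) := \Pi_i$. The non-concentration inequality \eqref{pConcentrationInPlanes} is quantified over \emph{all} planes coaxial with $T$, so it is not affected by the change of distinguished plane; the only bullet impacted is the first one, where the triangle inequality gives $\angle(\Pi(Q), \Pi_i) \leq \angle(\Pi(Q), \tilde{\Pi}(T)) + \angle(\tilde{\Pi}(T), \Pi_i) \leq 2 p_0$, which is handled by defining $p := 2 p_0$. Rescaling $p_0 \mapsto 2 p_0$ in the denominator of \eqref{pConcentrationInPlanes} costs a factor of $2^{\eps_3}$, and this together with any other $O(1)$ losses from refinements is comfortably absorbed into $C_3 = 2^{1/\eps_3} \cdot 100$ (which dominates $2^{\eps_3} \cdot 100$ for $\eps_3 \leq 1$). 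I do not expect any real obstacle: the geometric and combinatorial content has already been extracted by the two lemmas, and the Corollary amounts to a clean packaging of their conclusions.
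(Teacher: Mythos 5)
Your proposal is correct and is essentially identical to the paper's own treatment: the paper proves the Corollary by the same two-step composition (Lemma \ref{2ends2PlaneLem} followed by Lemma \ref{containedInTwoPlanesDisjointness}) and then replaces $\Pi(T)$ by $\Pi_i$, relabeling $2p$ as $p$ and absorbing the $2^{\eps_3}$-type loss in \eqref{pConcentrationInPlanes} into the constant $C_3 = 2^{1/\eps_3}\cdot 100$. Your accounting of where each bullet of the conclusion comes from, and why only the first bullet of the robust-containment definition is affected by the change of distinguished plane, matches the paper's remark exactly.
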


\subsection{Concentration and non-concentration in 3-planes}
\begin{defn}
Let $(\tubes,Y)$ be a set of essentially distinct plany $\delta$-tubes and their associated shading and let $\delta\leq\sigma\leq 1$. We say that $(\tubes,Y)$ is $(\eps_4,C_4)$-robustly contained in the $\sigma$ neighborhood of 3-planes if for each tube $T\in\tubes$, there exists a 3-plane $\Sigma(T)$ containing the line coaxial with $T$ so that 
\begin{itemize}
\item $\angle(\Pi(Q),\Sigma(T))\leq\sigma$ for each $Q\subset  Y(T)$.
\item For every 3-plane $\Sigma$ containing the line coaxial with $T$ and for every $s>0$, we have
\begin{equation*}
|\{Q \subset Y(T)\colon \angle(\Pi(Q),\Sigma)\leq s \}| \leq (s/\sigma)^{\eps_3}C_3 \lambda_Y\delta^{-1}.
\end{equation*}
\end{itemize}
\end{defn}

\begin{lem}\label{2ends3PlaneLem}
Let $(\tubes,Y)$ be a set of plany $\delta$-tubes and their associated shading and let $\eps_4>0$. Then there is a number $\delta\leq\sigma\leq 1$ and a $\sim \delta^{\eps_4}$-refinement $(\tubes^\prime,Y^\prime)$ of $(\tubes,Y)$ so that $|Y^\prime(T)|/|T|\sim\lambda_{Y^\prime}$ for all $T\in\tubes^\prime$, and $(\tubes,Y)$ is $(\eps_4,100)$-robustly contained in the $\sigma$ neighborhood of 3-planes.
\end{lem}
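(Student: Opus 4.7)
The plan is to mimic the proof of Lemma \ref{2ends2PlaneLem} almost verbatim, with $\operatorname{Grass}(3;4)$ replacing $\operatorname{Grass}(2;4)$ throughout. Since each cube $Q$ already carries the plane $\Pi(Q)$ from the planyness hypothesis, we only need to re-run the two-ends/popularity argument with respect to 3-planes coaxial with $T$ rather than 2-planes.

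First, for each $T \in \tubes$, I would define
\begin{equation*}
M(T) = \sup_{\delta \leq s \leq 1}\ \sup_{\substack{\Sigma \in \operatorname{Grass}(3;4)\\ T\ \textrm{coaxial with}\ \Sigma}} \frac{|\{Q \subset Y(T)\colon \angle(\Pi(Q),\Sigma)\leq s\}|}{s^{\eps_4}},
\end{equation*}
and pick $\sigma(T) \in [\delta,1]$ and a 3-plane $\Sigma(T)$ coaxial with $T$ coming within a factor of $2$ of $M(T)$. Setting $s=1$ in the supremum gives $M(T) \geq |\{Q\colon Q\subset Y(T)\}|$, so letting
\begin{equation*}
Y_1(T) = \bigcup\{Q \subset Y(T)\colon \angle(\Pi(Q),\Sigma(T))\leq \sigma(T)\},
\end{equation*}
we obtain $|Y_1(T)| \geq \tfrac{1}{2}\sigma(T)^{\eps_4}|Y(T)| \geq \tfrac{1}{2}\delta^{\eps_4}|Y(T)|$.

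Next I would dyadically pigeonhole twice: once to restrict to tubes with $\sigma(T)$ lying in a single dyadic level $\sigma$, and once to restrict to tubes with $|Y_1(T)|/|T|$ in a single dyadic level, throwing away at most a $|\log\delta|^{-2}$ factor of the total shading mass. Calling the resulting pair $(\tubes', Y')$ (with $Y'(T) \subset Y_1(T)$ chosen so that $|Y'(T)|/|T|\sim\lambda_{Y'}$ uniformly), this is the desired $\approx \delta^{\eps_4}$-refinement, and by construction $\angle(\Pi(Q),\Sigma(T)) \leq \sigma(T) \sim \sigma$ for every $Q \subset Y'(T)$.

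Finally, to verify the non-concentration bound, I would observe that for any 3-plane $\Sigma$ coaxial with $T \in \tubes'$ and any $s > \delta$,
\begin{equation*}
|\{Q \subset Y'(T)\colon \angle(\Pi(Q),\Sigma)\leq s\}| \leq |\{Q \subset Y(T)\colon \angle(\Pi(Q),\Sigma)\leq s\}| \leq s^{\eps_4} M(T),
\end{equation*}
and then the extremality of $\sigma(T), \Sigma(T)$ gives
\begin{equation*}
s^{\eps_4} M(T) \leq 2(s/\sigma(T))^{\eps_4}\,|\{Q \subset Y(T)\colon \angle(\Pi(Q),\Sigma(T))\leq\sigma(T)\}| = 2(s/\sigma(T))^{\eps_4}\,|Y_1(T)|/\delta^4.
\end{equation*}
The dyadic pigeonholing ensures $|Y_1(T)|/|T| \sim \lambda_{Y'}$, hence $|Y_1(T)|/\delta^4 \sim \lambda_{Y'}\delta^{-1}$, which yields the required bound $\leq 100(s/\sigma)^{\eps_4}\lambda_{Y'}\delta^{-1}$ after absorbing the $O(1)$ constants. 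There is no real obstacle here: the argument is entirely parallel to Lemma \ref{2ends2PlaneLem}, and the only mild bookkeeping is ensuring that combining the two dyadic pigeonholes with the density normalization keeps the constant in the non-concentration inequality below $100$, which is standard.
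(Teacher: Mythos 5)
Your proof is correct and follows exactly the route the paper intends: the paper explicitly omits this proof, stating that it is ``almost identical to the proof of Lemma \ref{2ends2PlaneLem},'' and you have reproduced that argument with $\operatorname{Grass}(3;4)$ in place of $\operatorname{Grass}(2;4)$. The only minor point worth flagging is that (as in Lemma \ref{2ends2PlaneLem}) the final non-concentration bound requires taking $Y'(T)=Y_1(T)$ for the surviving tubes rather than a further proper sub-shading, since you need $|Y_1(T)|/|T|\lesssim\lambda_{Y'}$ and not just $\gtrsim$; your second dyadic pigeonhole handles this correctly.
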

The proof of Lemma \ref{2ends3PlaneLem} is almost identical to the proof of Lemma \ref{2ends2PlaneLem}, so we will omit it.

\subsection{Uniqueness of $\Pi(T)$ and $\Sigma(T)$}
Suppose that $(\tubes,Y)$ is a set of plany $\delta$-tubes and their associated shading that is $(\eps_3,C_3)$ contained in the $p$ neighborhood of planes and $(\eps_4,C_4)$ contained in the $\sigma$ neighborhood of 3-planes, for some $\delta\leq\sigma\leq p\lesssim 1$. If $T\in\tubes$ and $Y(T)$ is non-empty, then the plane $\Pi(T)$ is unique (up to uncertainty $\sim p$) in the following sense: if $\Pi$ is a plane containing the line coaxial with $T$ and satisfying $\angle(\Pi(Q),\Pi)\leq p$ for at least one cube $Q\subset Y(T)$, then $\angle(\Pi,\Pi(T))\lesssim p$. The question of whether $\Sigma(T)$ is similarly unique is slightly more subtle. For example, if $p=\sigma$, then $\Sigma(T)$ is far from unique; every 3-plane containing $\Pi(T)$ would be an equally valid candidate for $\Sigma(T)$. While $\Sigma(T)$ might not be unique, the intersection of suitable neighborhoods of $\Pi(T)$ and $\Sigma(T)$ is unique. The next lemma and its corollary will make this statement precise.

\begin{lem}\label{prismsAreUnique}
Let $0<q\leq p\lesssim 1$. Let $\Pi\subset\RR^4$ be a plane, let $\Sigma,\Sigma^\prime\subset\RR^4$ be 3-planes containing $\Pi$. Let
\begin{equation*}
W=\{v\in S^3\colon \angle(v,\Pi)\leq p,\ \angle(v,\Sigma)\leq q, \angle(v,\Sigma^\prime)\leq q \}.
\end{equation*}
Then for each $1\leq A\leq p/(4q)$, at least one of the two following things must hold.
\begin{enumerate}
\item[(A):]\label{secondItem} There is a plane $\Pi_0\subset\RR^4$ containing the origin so that $W$ is contained in the $p/A$ neighborhood of $\Pi$.

\item[(B):]\label{firstItem} We have the containments
\begin{align}
\{v\in S^3\colon \angle(v,\Pi)\leq p,\ \angle(v,\Sigma)\leq q\} & \subset \{v\in S^3\colon \angle(v,\Pi)\leq p,\ \angle(v,\Sigma^\prime)\leq 6A q\},\label{firstContainment}\\
\{v\in S^3\colon \angle(v,\Pi)\leq p,\ \angle(v,\Sigma^\prime)\leq q\} & \subset \{v\in S^3\colon \angle(v,\Pi)\leq p,\ \angle(v,\Sigma)\leq 6A q\}\label{secondContainment}.
\end{align}

\end{enumerate}
\end{lem}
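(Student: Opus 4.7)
The plan is to fix coordinates adapted to the configuration and then perform a dichotomy on the size of $\theta := \angle(\Sigma,\Sigma')$. After a rotation we may assume $\Pi = \operatorname{span}(e_1,e_2)$, $\Sigma = \operatorname{span}(e_1,e_2,e_3)$, and (since $\Sigma'$ is a $3$-plane containing $\Pi$) $\Sigma' = \operatorname{span}(e_1,e_2,\cos\theta\, e_3 + \sin\theta\, e_4)$. A direct computation then gives, for any unit vector $v = a\,e_1 + b\,e_2 + c\,e_3 + d\,e_4$,
\begin{equation*}
\sin\angle(v,\Pi) = \sqrt{c^2+d^2},\qquad \sin\angle(v,\Sigma) = |d|,\qquad \sin\angle(v,\Sigma') = |c\sin\theta - d\cos\theta|.
\end{equation*}
Fix $A$ with $1\leq A\leq p/(4q)$; I would split on whether $\sin\theta$ is below or above the critical scale $4Aq/p$.

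If $\sin\theta > 4Aq/p$ (the ``transverse'' case), I would take $\Pi_0:=\Pi$ and verify (A). For any $v\in W$ the conditions $|d|\leq q$ and $|c\sin\theta - d\cos\theta|\leq q$ combine via the triangle inequality to give $|c|\sin\theta \leq 2q$, hence $|c|\leq 2q/\sin\theta < p/(2A)$. Combined with $|d|\leq q \leq p/(4A)$---which is where the hypothesis $A\leq p/(4q)$ enters---this yields $\sin\angle(v,\Pi) = \sqrt{c^2+d^2}\leq |c|+|d| < 3p/(4A) < p/A$, placing $v$ in the $p/A$-neighborhood of $\Pi_0 = \Pi$.

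If instead $\sin\theta \leq 4Aq/p$ (the ``nearly coincident'' case), the containments in (B) follow from the triangle inequality in coordinates. For any unit $v$ with $\angle(v,\Pi)\leq p$ and $\angle(v,\Sigma)\leq q$, the formulas above yield $|c|\leq \sqrt{c^2+d^2}\leq p$ and $|d|\leq q$, so
\begin{equation*}
\sin\angle(v,\Sigma') \;=\; |c\sin\theta - d\cos\theta| \;\leq\; p\sin\theta + q \;\leq\; 5Aq \;\leq\; 6Aq,
\end{equation*}
which (up to the standard interchange of sines and angles at scales $\lesssim 1$) gives \eqref{firstContainment}; the reverse containment \eqref{secondContainment} follows by swapping the roles of $\Sigma$ and $\Sigma'$. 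The principal point requiring care is simply choosing the crossover threshold $4Aq/p$ so the numerical constants line up with both $p/A$ in Case (A) and $6Aq$ in Case (B), and this is exactly where the quantitative hypothesis $A\leq p/(4q)$ is essential; there is no deeper geometric obstacle once coordinates are fixed.
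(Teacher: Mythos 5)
Your proof is correct and follows essentially the same approach as the paper: fix coordinates so that $\Pi=\operatorname{span}(e_1,e_2)$ and $\Sigma=\operatorname{span}(e_1,e_2,e_3)$, then dichotomize on the angle between $\Sigma$ and $\Sigma'$. The paper parametrizes $\Sigma'$ by a unit normal $(a,b)$ and splits at the threshold $a>2Aq/p$, while you parametrize by $\theta=\angle(\Sigma,\Sigma')$ and split at $\sin\theta>4Aq/p$; since $a$ plays the role of $\sin\theta$, these are the same argument up to the choice of cutoff, and both yield the claimed constants. One small remark: for \eqref{secondContainment} you invoke symmetry under swapping $\Sigma\leftrightarrow\Sigma'$, which is fine, but it is worth noting explicitly that the dichotomy quantity $\sin\theta$ is itself symmetric (so the case-split is unaffected by the swap), and that the direct estimate $|c\sin\theta-d\cos\theta|\le p\sin\theta+q$ avoids any division by $\cos\theta$ — this is exactly what makes the swap painless, whereas a naive attempt to bound $|d|$ from $|c\sin\theta-d\cos\theta|\le q$ in the original coordinates would require a lower bound on $\cos\theta$ that your threshold does not supply.
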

\begin{proof}
We will show that if Item (A) is false, then \eqref{secondContainment} is true. Since Item (A) is symmetric in $\Sigma$ and $\Sigma^\prime$, an identical argument shows that if Item (A) is false, then \eqref{firstContainment} is true,

Without loss of generality, we can assume that 
\begin{equation*}
\begin{split}
\Pi & = \{(x,y,z,w)\in\RR^4\colon z=w=0\},\\
\Sigma & =\{(x,y,z,w)\in\RR^4\colon w=0\},\\
\Sigma^\prime&= \{(x,y,z,w)\in\RR^4\colon az+bw=0\},
\end{split}
\end{equation*}
where $(a,b)$ is a unit vector. Then
\begin{equation*}
W = \{ (x,y,z,w)\in\RR^4: |(x,y,z,w)| = 1,  |z| < p, |w| < q, |az+bw|<q\}.
\end{equation*}
We will consider two cases.
\medskip

\noindent {\bf Case 1.} $a>2A q/p$. Then 
\begin{equation*}
\begin{split}
W & \subset \{ (x,y,z,w)\in\RR^4: |(x,y,z,w)| = 1,  |z| < p, |w| < q, |z|<2q/|a|\}\\
&\subset \{ (x,y,z,w)\in\RR^4: |(x,y,z,w)| = 1, |w| < q, |z|<p/A\},
\end{split}
\end{equation*}
so Item (A) holds. 

\medskip
\noindent {\bf Case 2.} $a\leq 2Aq/p$. Since $p/(4q)$, we have $a<1/2$ and thus $b\geq 1/2$. Thus
\begin{equation*}
\begin{split}
\{v\in S^3\colon &  \angle(v,\Pi)\leq p,\ \angle(v,\Sigma^\prime)\leq q\}\\
&=\{ (x,y,z,w)\in\RR^4: |(x,y,z,w)| = 1,  |z| < p, |w|<p, |az+bw|<q\}\\
&\subset \{ (x,y,z,w)\in\RR^4: |(x,y,z,w)| = 1,  |z| < p, |w|<2(2A+1)q\}.
\end{split}
\end{equation*}
Since $A\geq 1$, \eqref{secondContainment} holds. 
\end{proof}

\begin{cor}\label{prismCor}
Let $\delta\leq q\leq p\lesssim 1$ and let $(\tubes,Y)$ be a set of plany $\delta$-tubes and their associated shading that is $(\eps_3,C_3)$ contained in the $p$ neighborhood of planes. Let $T\in\tubes$ and let $t \geq C_3(p/8q)^{-\eps_3}$. Let $\Sigma,\Sigma^\prime$ be 3-planes containing the line coaxial with $T$ and suppose that
\begin{equation*}
|\{Q\subset Y(T)\colon \angle(\Pi(Q),\Sigma)\leq q,\ \angle(\Pi(Q),\Sigma^\prime)\leq q\}|\geq t \lambda_Y\delta^{-1}.
\end{equation*}
Then 
\begin{equation}\label{vectorContainments}
\begin{split}
\{v \in S^3\colon \angle(v,\Pi(T))\leq p,\ \angle(v,\Sigma)\leq q \} & \subset \{v \in S^3\colon \angle(v,\Pi(T))\leq 2p,\ \angle(v,\Sigma^\prime)\leq 13 (C_3/t)^{1/\eps_3}q \},\\
\{v \in S^3\colon \angle(v,\Pi(T))\leq p,\ \angle(v,\Sigma^\prime)\leq q \} & \subset \{v \in S^3\colon \angle(v,\Pi(T))\leq 2p,\ \angle(v,\Sigma)\leq 13 (C_3/t)^{1/\eps_3}q \}.
\end{split}
\end{equation}

\end{cor}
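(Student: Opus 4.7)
The plan is to apply Lemma \ref{prismsAreUnique} with $\Pi = \Pi(T)$, the given 3-planes $\Sigma, \Sigma'$, and the parameter $A := 2(C_3/t)^{1/\eps_3}$, then use the non-concentration estimate \eqref{pConcentrationInPlanes} from the $(\eps_3,C_3)$-robust containment in planes to rule out Item (A) of the resulting dichotomy. By the symmetry $\Sigma \leftrightarrow \Sigma'$, it suffices to prove the first containment in \eqref{vectorContainments}. The hypothesis $t \geq C_3(p/8q)^{-\eps_3}$ rearranges to $(C_3/t)^{1/\eps_3} \leq p/(8q)$, so $A \leq p/(4q)$, as required by Lemma \ref{prismsAreUnique}.

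Set $\mathcal{Q}^{*} := \{Q \subset Y(T) : \angle(\Pi(Q), \Sigma) \leq q,\ \angle(\Pi(Q), \Sigma') \leq q\}$, so that $|\mathcal{Q}^{*}| \geq t\lambda_Y\delta^{-1}$ by hypothesis. I rule out Item (A) as follows. If the set
\begin{equation*}
W = \{v \in S^3 : \angle(v, \Pi(T)) \leq p,\ \angle(v, \Sigma) \leq q,\ \angle(v, \Sigma') \leq q\}
\end{equation*}
were contained in the $p/A$-neighborhood of $\Pi(T)$, then for each $Q \in \mathcal{Q}^{*}$ I would choose a unit vector $v_Q \in \Pi(Q)$ orthogonal to $v(T)$ (using plany-ness, up to an $O(\delta)$ error). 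The $(\eps_3,C_3)$-robust containment in planes gives $\angle(v_Q, \Pi(T)) \leq \angle(\Pi(Q),\Pi(T)) \leq p$, and the definition of $\mathcal{Q}^{*}$ gives $\angle(v_Q, \Sigma), \angle(v_Q, \Sigma') \leq q$, so $v_Q \in W$. Item (A) would then yield $\angle(v_Q, \Pi(T)) \leq p/A$, and since $\Pi(Q)$ and $\Pi(T)$ share the direction $v(T)$, this translates to $\angle(\Pi(Q), \Pi(T)) \leq p/A$. Applying \eqref{pConcentrationInPlanes} with $s = p/A$ then gives
\begin{equation*}
|\mathcal{Q}^{*}| \leq A^{-\eps_3} C_3 \lambda_Y \delta^{-1} = 2^{-\eps_3}\, t \lambda_Y \delta^{-1} < t \lambda_Y \delta^{-1},
\end{equation*}
contradicting the hypothesis on $|\mathcal{Q}^{*}|$.

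Item (B) of Lemma \ref{prismsAreUnique} therefore holds, which directly yields the desired containment: one checks $6A = 12(C_3/t)^{1/\eps_3} \leq 13(C_3/t)^{1/\eps_3}$, and of course $p \leq 2p$. The main technical point, and the hardest step to handle cleanly in a full write-up, is that Lemma \ref{prismsAreUnique} as stated requires $\Pi \subset \Sigma,\Sigma'$, whereas $\Pi(T)$ need not be contained in the given $\Sigma,\Sigma'$. However, the existence of even one $Q \in \mathcal{Q}^{*}$ combined with the robust planes bound forces $\angle(\Pi(T), \Sigma), \angle(\Pi(T), \Sigma') \leq p + q \leq 2p$, so one may pass to nearby 3-planes that do contain $\Pi(T)$ at the cost of $O(p)$ angular slack; this slack is exactly what the relaxation $\angle(v, \Pi(T)) \leq p \to 2p$ in the conclusion of the corollary is designed to absorb.
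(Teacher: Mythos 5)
Your plan to apply Lemma \ref{prismsAreUnique} with $\Pi = \Pi(T)$ has a genuine gap, and the repair you sketch at the end does not close it. You correctly observe that $\Pi(T)$ need not lie inside $\Sigma$ or $\Sigma'$, so the lemma cannot be invoked as stated. Your proposed fix --- replacing $\Sigma,\Sigma'$ by nearby 3-planes $\tilde\Sigma,\tilde\Sigma'$ that do contain $\Pi(T)$, at angular cost $\angle(\Sigma,\tilde\Sigma),\angle(\Sigma',\tilde\Sigma')\lesssim p$ --- injects an $O(p)$ error into the \emph{$q$-scale} conditions, not the $p$-scale condition. Concretely, for $v$ with $\angle(v,\Sigma)\leq q$ you only obtain $\angle(v,\tilde\Sigma)\leq p+2q$, so Lemma \ref{prismsAreUnique} would have to be applied with the role of $q$ played by something $\gtrsim p$, and its Item (B) would then output $\angle(v,\tilde\Sigma')\lesssim Ap$ rather than $\lesssim Aq$. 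When $p\gg q$ (which is the regime the corollary must handle: note $(C_3/t)^{1/\eps_3}$ can be as large as $p/(8q)$), this is far weaker than the required $\angle(v,\Sigma')\leq 13(C_3/t)^{1/\eps_3}q$. The relaxation $\angle(v,\Pi(T))\leq p\to 2p$ in the conclusion is budgeted for an $O(p)$ loss in the $\Pi(T)$ condition only; it does not, and cannot, absorb an $O(p)$ loss in the $\Sigma$ conditions. The same issue contaminates your ruling-out-of-(A) step, which is written with the set $W$ involving $\Sigma,\Sigma'$ rather than $\tilde\Sigma,\tilde\Sigma'$, so that application of the lemma is formally illegitimate as well.

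The paper's proof sidesteps this entirely by choosing a different pivot plane. Because $t\geq\delta/\lambda_Y$, the hypothesis guarantees at least one cube $Q_0\subset Y(T)$ with $\angle(\Pi(Q_0),\Sigma)\leq q$ and $\angle(\Pi(Q_0),\Sigma')\leq q$. Lemma \ref{prismsAreUnique} is then applied with $\Pi=\Pi(Q_0)$, together with 3-planes $\Sigma_1\supset\Pi(Q_0)$ and $\Sigma_1'\supset\Pi(Q_0)$ satisfying $\angle(\Sigma,\Sigma_1)\leq q$ and $\angle(\Sigma',\Sigma_1')\leq q$. The crucial point is that $\Pi(Q_0)$ is $q$-close (not merely $p$-close) to both $\Sigma$ and $\Sigma'$, so passing to $\Sigma_1,\Sigma_1'$ costs only $O(q)$, which the constant $13$ in the conclusion is designed to absorb; the single $p$-scale loss $\angle(\Pi(Q_0),\Pi(T))\leq p$ is what uses up the $p\to 2p$ slack. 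Your argument is salvageable only by adopting this $\Pi(Q_0)$ device, which is the essential idea you are missing.
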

\begin{proof}
Without loss of generality, we can assume that the line coaxial with $T$ passes through the origin. Since $t\geq \delta/\lambda_Y,$ there is at least one cube $Q_0\subset Y(T)$ with $\angle(\Pi(Q_0),\Sigma)\leq q$ and $\angle(\Pi(Q_0),\Sigma^\prime)\leq q.$ Let $\Sigma_1$ (resp. $\Sigma_1^\prime$) be a 3-plane containing $\Pi(Q_0)$ with $\angle(\Sigma,\Sigma_1)\leq q$ (resp. $\angle(\Sigma,\Sigma_1^\prime)\leq q$). Note as well that since $(\tubes,Y)$ is $(\eps_3,C_3)$ contained in the $p$ neighborhood of planes, we have $\angle(\Pi(T),\Pi(Q_0))\leq p$. 

Apply Lemma \ref{prismsAreUnique} to $\Pi(Q_0),\Sigma_1,$ and $\Sigma_1^\prime$, with $p$ and $q$ as above and $A = 2(C_3/t)^{1/\eps_3}$ (the hypothesis $t \geq C_3(p/8q)^{-\eps_3}$ ensures that $A\leq p/(4q)$). We see that Item (A) cannot hold, since if it did, then there would exist a plane $\Pi_0$ so that
\begin{equation*}
\begin{split}
|\{Q\subset Y(T)\colon\angle(\Pi(Q),\Pi_0)\leq p/A\}|\geq & |\{Q\subset Y(T)\colon \angle(\Pi(Q),\Sigma)\leq q,\ \angle(\Pi(Q),\Sigma^\prime)\leq q\}|\\
&\geq t \lambda_Y\delta^{-1},
\end{split}
\end{equation*}
but this contradicts the estimate
\begin{equation*}
\begin{split}
|\{Q\subset Y(T)\colon\angle(\Pi(Q),\Pi_0)\leq p/A\}|&\leq C_3 A^{-\eps_3}\lambda_Y\delta^{-1}\\
&<t\lambda_y\delta^{-1}.
\end{split}
\end{equation*}
We conclude that Item (B) must hold, i.e.
\begin{equation}\label{vectorContainmentPre}
\begin{split}
\{v\in S^3\colon \angle(v,\Pi(Q_0)\leq p,\ \angle(v,\Sigma_1)\leq q\} & \subset \{v\in S^3\colon \angle(v,\Pi(Q_0))\leq p,\ \angle(v,\Sigma^\prime_1)\leq 12(C_3/t)^{1/\eps_3} q\},\\
\{v\in S^3\colon \angle(v,\Pi(Q_0))\leq p,\ \angle(v,\Sigma^\prime_1)\leq q\} & \subset \{v\in S^3\colon \angle(v,\Pi(Q_0))\leq p,\ \angle(v,\Sigma_1)\leq 12(C_3/t)^{1/\eps_3} q\}.
\end{split}
\end{equation}

Since $\angle(\Pi(Q_0),\Pi(Q))\leq p$, $\angle(\Sigma,\Sigma_1)\leq q$, and $\angle(\Sigma^\prime,\Sigma_1^\prime)\leq q$, \eqref{vectorContainmentPre} implies \eqref{vectorContainments}.
\end{proof}

\section{The Planebrush argument}\label{planebrushSection}
In this section, we will use the ``planebrush argument'' to show that unions of plany $\theta$-tubes must have large volume. The key geometric argument of the planebrush will be encapsulated in Lemma \ref{volumeBdPlainyTubesSpecialCase2Plane} below, which is quite technical. Lemma \ref{volumeBdPlainyTubesSpecialCase2Plane} is then used to prove Proposition \ref{volumeBdPlainyTubesSpecialCase}, which is a version of the lemma that removes some of the technical assumptions.

Up until this point, we have referred to $\delta$-tubes, $\delta$-cubes, etc. In this section, we will use the parameter $\theta$, and we will refer to $\theta$-tubes, $\theta$-cubes, etc. In later sections, we will simultaneously consider a Besicovitch set at two scales, $\delta$ and $\theta$, with $0<\delta<\!\!<\theta<\!\!<1$. The results from this section will be applied at scale $\theta$.

\begin{lem}\label{volumeBdPlainyTubesSpecialCase2Plane}
Let $0<\theta<1$. Let $0<\eps_3<\eps_2<\eps_1<1$. Let $\Omega\subset S^3$ be a set of $\theta$-separated directions. Let $(\tubes,Y)$ be a set of essentially distinct plany $\theta$-tubes and their associated shading.

Suppose that 
\begin{itemize}
\item There are $\sim|\tubes|/|\Omega|$ tubes from $\tubes$ pointing in each direction $v\in\Omega$. 
\item $(\tubes,Y)$ is $(\eps_1, C_1)$-two-ends.
\item $(\tubes,Y)$ is $(\eps_2,C_2)$-robustly-transverse.
\item There exists a plane whose $p$ neighborhood contains every tube from $\tubes$.
\item $(\tubes,Y)$ is $(\eps_3,C_3)$-robustly contained in the $p$ neighborhood of planes.
\end{itemize}
Then for each $\eps>0$, there is a constant $c>0$ (depending on  $\eps,\eps_1,\eps_2,$ and $\eps_3$) so that
\begin{equation}\label{volumeBoundPlainyTubesIn3Plane}
\Big|\bigcup_{T\in\tubes}Y(T)\Big|\geq c C_1^{-1/\eps_1} C_2^{-2/\eps_2} C_3^{-1/\eps_3}\theta^{\eps} \lambda_Y^{4/3} \theta^{2/3} (\theta^3|\Omega|)^{1/3}(\theta^3|\tubes|)^{2/3}.
\end{equation} 
\end{lem}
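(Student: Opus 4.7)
The plan is to adapt Wolff's hairbrush argument by using a 2-plane as the ``stem'' of the brush rather than a 1-tube. After the harmless refinement making $|\tubes_Y(Q)|\sim\mu_Y$ for every $Q$, I would pigeonhole a tube $T_0\in\tubes$ such that $Y(T_0)$ is rich: many well-separated cubes $Q\subset Y(T_0)$ each carry $\sim\mu_Y$ tubes making angle $\gtrsim 1$ with $T_0$. The two-ends hypothesis $(\eps_1,C_1)$ supplies the separation, the robust transversality hypothesis $(\eps_2,C_2)$ supplies the large-angle tubes; these are the standard manipulations that already appear in the proof of Theorem~\ref{wolffThm}. The stem of the planebrush is then the plane $\Pi(T_0)$.

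Every tube $T$ meeting $T_0$ transversally has its own plane $\Pi(T)$ within angle $p$ of $\Pi(T_0)$, so the 3-plane $\Sigma(T):=\Pi(T_0)+\Pi(T)$ is defined and contains both planes as well as $T$. After dyadic pigeonholing in the angle $q:=\angle(\Pi(T),\Pi(T_0))$, with $\theta\le q\le p$, these 3-planes are parametrised by a 1-dimensional direction inside $\Pi(T_0)^{\perp}$, yielding $\lesssim p/q$ slabs. Corollary~\ref{prismCor} guarantees that each tube's 3-plane is essentially unique up to the built-in angular uncertainty, and an analogue of Lemma~\ref{containedInTwoPlanesDisjointness} partitions the tubes into families $\tubes_\Sigma$ with essentially disjoint shadings. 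The directions $v(T)$ of tubes in $\tubes_\Sigma$ lie in $\Sigma\cap S^3$, an essentially 2-dimensional set; in particular $|\tubes_\Sigma|\lesssim m\cdot|\Omega_\Sigma|$, where $m=|\tubes|/|\Omega|$ and $\Omega_\Sigma$ is the piece of $\Omega$ contained in the $\theta$-neighborhood of $\Sigma$.

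Inside each 3-plane slab the plany hypothesis allows a C\'ordoba-style $L^2$ bound on each 2-plane slice $\Pi(Q)\subset\Sigma$, which is effectively Wolff's planar Kakeya argument. Combined with the in-slab hairbrush around $T_0$ (now a genuine 3-dimensional hairbrush inside $\Sigma$), this yields a volume bound in each slab. Summing the $\lesssim p/q$ disjoint slab volumes and applying H\"older to convert $\sum|\tubes_\Sigma|=|\tubes|$ and $\sum|\Omega_\Sigma|\lesssim|\Omega|$ into a product of the form $|\Omega|^{1/3}|\tubes|^{2/3}$ reproduces the target exponents $\lambda^{4/3}\theta^{2/3}(\theta^3|\Omega|)^{1/3}(\theta^3|\tubes|)^{2/3}$. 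The constants $C_1^{-1/\eps_1},C_2^{-2/\eps_2},C_3^{-1/\eps_3}$ emerge from the three non-concentration losses incurred in the pigeonholes associated with two-ends, robust transversality, and robust containment in planes.

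The hardest step will be the interior volume estimate in each 3-plane slab, where one must produce the $2/3$ power of $|\tubes|$ in place of the Wolff $1/2$. This extra gain is what makes the planebrush worthwhile: because the direction set $\Omega_\Sigma$ inside each 3-plane slab is only 2-dimensional, a C\'ordoba-type bound gives a strictly better dependence on the tube count than Wolff's hairbrush in generic 3-space. Matching the three powers of $\lambda$ coming from the global stem, the two-ends second pair, and the in-slab C\'ordoba inequality requires careful Cauchy--Schwarz interpolation, and this is where I expect the main technical difficulty to concentrate; Corollary~\ref{prismCor} (to make $\Sigma(T)$ well-defined) and the robust-plane hypothesis (to upgrade each in-slab estimate) are essential at this stage.
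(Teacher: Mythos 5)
Your decomposition into $3$-planes containing the stem plane $\Pi(T_0)$ matches the informal intuition in the introduction, but the paper's actual proof of Lemma~\ref{volumeBdPlainyTubesSpecialCase2Plane} is structured quite differently, and the step you flag as the ``hardest'' is exactly where your sketch has a genuine gap.

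First, the organizational differences. The paper does not pigeonhole a single rich tube $T_0$ and build a brush around it. Instead it first applies Lemma~\ref{2ends3PlaneLem} to pass to a refinement that is $(\eps_4,100)$-robustly contained in the $\sigma$-neighborhood of $3$-planes (this invocation is entirely missing from your sketch, and the parameter $\sigma$ it produces is used crucially later), and then it works with the global collection $\mathcal{R}_1$ of quintuples $(Q,T,Q',T',Q'')$. After pruning $\mathcal{R}_1$ to $\mathcal{R}_2$ by imposing separation/transversality conditions and dyadically pigeonholing in the quantity
$\rho(G)=\max\{\angle(T',T_1')\}$, the paper derives the key geometric estimate \eqref{angleControl}: $\rho(G)$ controls the angle between $\Pi(Q'')$ and the $3$-plane spanned by $\Pi(Q)$ and $T'$, with precision $\sim \theta/\rho$. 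The comparison between \eqref{lowerBoundCubes} and \eqref{upperBoundCubes}, and hence the upper bound \eqref{rhoBiggerSigma} on $\rho$, is where the $\sigma$-robust $3$-plane containment earns its keep. None of $\rho$, $\sigma$, or the quintuple counts appear in your argument.

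Second, and more seriously, the step you describe as ``Inside each 3-plane slab the plany hypothesis allows a C\'ordoba-style $L^2$ bound on each 2-plane slice $\Pi(Q)\subset\Sigma$ \ldots combined with \ldots a genuine 3-dimensional hairbrush inside $\Sigma$'' does not work as stated. The Kakeya problem in $\RR^3$ is open, and the paper explicitly flags this as the central obstacle to the planebrush: ``we do not have a nice analogue of C\'ordoba's argument.'' C\'ordoba's $L^2$ orthogonality requires the tubes to live in a single $2$-plane, but within a $3$-plane slab $\Sigma$ the planes $\Pi(Q)$ vary from cube to cube, so one cannot foliate $\Sigma$ into $2$-plane slices and apply C\'ordoba slice-by-slice to get a volume lower bound. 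Nor does Wolff's $3$-dimensional hairbrush give the $(\theta^3|\tubes|)^{2/3}$ exponent you need inside the slab; it would only give a $1/2$ power. The paper circumvents this entirely: after passing to a single wedge $W_0\in\mathcal{W}$, it picks a rich quintuple and counts distinct cubes $Q''$ directly by bounding, for each $Q''$, the number of admissible $T'$ ($\lesssim\theta^{-1}\rho$), then admissible $Q'$ per $T'$, then admissible $T$ per $Q'$. This is a purely combinatorial count exploiting the angular lower bounds \eqref{R2Prop1}--\eqref{R2Prop3} baked into $\mathcal{R}_2$, not any known Kakeya estimate inside the $3$-plane. You would need to replace your proposed in-slab C\'ordoba/hairbrush step with this kind of direct incidence count, together with the $\rho$-machinery and the $3$-plane robustness refinement that make it precise.
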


\begin{rem}
It might seem suspicious that the quantity $p$ appears in the hypothesis of Lemma \ref{volumeBdPlainyTubesSpecialCase2Plane} but does not appear in the conclusion. However, the quantity $p$ is implicitly present in \eqref{volumeBoundPlainyTubesIn3Plane}, since we always have the bound $|\Omega|\lesssim p^2 \theta^{-3}$.
\end{rem}
\begin{proof}
Fix a choice of $\eps>0$. Define $\eps_4=\eps/C_0,$ where $C_0$ is a large constant (depending on $\eps_1,\eps_2,$ and $\eps_3)$ that will be chosen later. Apply Lemma \ref{2ends3PlaneLem} to $(\tubes,Y)$ with this choice of $\eps_4$; let $\sigma$ and $(\tubes_1,Y_1)$ be the output from this lemma. We have that $(\tubes_1,Y_1)$ is a $\theta^{\eps_4}$-refinement of $(\tubes,Y)$, and $(\tubes_1,Y_1)$ is $(\eps_4,100)$-robustly contained in the $\sigma$ neighborhood of 3-planes. Furthermore, we have that 
\begin{equation}\label{sameNumberOfCubesInEachTube}
|\{Q\colon Q\subset Y_1(T)\}| \sim \theta^{-1}|Y_1(T)|/|T|\sim \lambda_{Y_1}\theta^{-1} \quad\textrm{for all}\ T\in\tubes_1.
\end{equation}

Note that
\begin{equation*}
\lambda_{Y_1}\geq\theta^{\eps_4}\lambda_Y,\quad\textrm{and}\quad\mu_{Y_1}\geq\theta^{\eps_4}\mu_Y.
\end{equation*}

Let $\mathcal{R}_1$ be the set of all quintuples $(Q,T,Q^\prime,T^\prime, Q^{\prime\prime})$ with
\begin{itemize}
\item $T,T^\prime\in\tubes_1$.
\item $Q,Q^\prime\subset Y_1(T)$.
\item $T^\prime\in\tubes_1(Q^\prime).$
\item $Q^{\prime\prime}\subset Y_1(T^\prime).$
\end{itemize}

We will estimate the size of $\mathcal{R}_1$. For each cube $Q^\prime\in\mathcal{Q}(Y_1)$, there are $\sim |\tubes_1(Q^\prime)|^2$ pairs of tubes $T,T^\prime\in\tubes_1(Q^\prime)$. By \eqref{sameNumberOfCubesInEachTube} we have that for each such pair, there are $\sim \theta^{-1}\lambda_{Y_1}$ cubes $Q\subset Y_1(T)$ and $\sim \theta^{-1}\lambda_{Y_1}$ cubes $Q^{\prime\prime}\subset Y_1(T^\prime)$. Thus for each $Q^\prime\in\mathcal{Q}(Y_1)$, there are $\sim |\tubes_1(Q^\prime)|^2 \theta^{-2}\lambda_{Y_1}^2$ quintuples $(Q,T,Q^\prime,T^\prime,Q^{\prime\prime})$ with that choice of $Q^\prime$. Summing over the cubes in $\mathcal{Q}(Y_1)$, we conclude that
\begin{equation}\label{upperBoundR_1}
|\mathcal{R}_1| \sim \theta^{-2}\lambda_{Y_1}^2 \sum_{Q^\prime\in\mathcal{Q}(Y_1)}|\tubes_1(Q^\prime)|^2\leq \theta^{-2}\lambda_{Y_1}^2|\mathcal{Q}(Y_1)| (\theta^{-\eps_4}C_2 \mu_{Y_1})^2,
\end{equation}
where in the final inequality we use the fact that $(\tubes_1,Y_1)$ is $(\eps_2,C_2\theta^{-\eps_4})$-robustly-transverse, so in particular $|\tubes_1(Q^\prime)|\leq \theta^{-\eps_4}C_2 \mu_{Y_1}$ for each cube $Q$.

Since $\sum_{Q\in\mathcal{Q}(Y_1)}|\tubes_1(Q)|= |\mathcal{Q}(Y_1)|\mu_{Y_1}$, by Cauchy-Schwarz we have
\begin{equation*}
\sum_{Q^\prime\in\mathcal{Q}(Y_1)}|\tubes_1(Q^\prime)|^2\geq |\mathcal{Q}(Y_1)|\mu_{Y_1}^2.
\end{equation*}
Thus
\begin{equation}\label{sizeOfR1}
|\mathcal{Q}(Y_1)|\theta^{-2}\lambda_{Y_1}^2\mu_{Y_1}^2 \leq |\mathcal{R}_1|\leq (\theta^{-\eps_4}C_2)^2|\mathcal{Q}(Y_1)|\theta^{-2}\lambda_{Y_1}^2\mu_{Y_1}^2.
\end{equation}

Define $\mathcal{R}_2$ to be the set of all quintuples $(Q,T,Q^\prime,T^\prime, Q^{\prime\prime})\in\mathcal{R}_1$ that satisfy the properties
\begin{align}
\operatorname{dist}(Q,Q^\prime)&\geq c_0 \theta^{\eps_4/\eps_1}C_1^{-1/\eps_1},\label{R2Prop1}\\
\operatorname{dist}(Q^\prime,Q^{\prime\prime})&\geq c_0 \theta^{\eps_4/\eps_1}C_1^{-1/\eps_1}\label{R2Prop4},\\
\angle(v(T),\ v(T^\prime))&\geq c_0\theta^{2\eps_4/\eps_2}C_2^{-2/\eps_2},\label{R2Prop2}\\
\angle(\Pi(Q),\Pi(Q^\prime))&\geq c_0 \theta^{\eps_4/\eps_3}C_3^{-1/\eps_3}p.\label{R2Prop3}
\end{align}

We claim that if the constant $c_0$ above is chosen sufficiently small, then 
\begin{equation}\label{R2VsR1}
|\mathcal{R}_2|\geq |\mathcal{R}_1|/2.
\end{equation}
Indeed, $\mathcal{R}_1\backslash\mathcal{R}_2$ is the set of quintuples $(Q,T,Q^\prime,T^\prime,Q^{\prime\prime})$ where at least one of the above four properties fails. We will show that the fraction of quintuples where any of these four properties fail is small.

For \eqref{R2Prop1},
\begin{equation}
\begin{split}
\sum_{Q^\prime\in\mathcal{Q}(Y_1)}&\sum_{T,T^\prime\in\tubes_1(Q^\prime)}\sum_{Q^{\prime\prime}\in Y_1(T^\prime)}|\{ Q\subset Y_1(T)\colon \operatorname{dist}(Q,Q^\prime)\leq c_0 C_1^{-1/\eps_1}\theta^{\eps_4/\eps_1} \}|\\
&\leq \sum_{Q^\prime\in\mathcal{Q}(Y_1)}\sum_{T,T^\prime\in\tubes_1(Q^\prime)}\sum_{Q^{\prime\prime}\in Y_1(T^\prime)}\big(c_0 C_1^{-1/\eps_1}\theta^{\eps_4/\eps_1}\big)^{\eps_1}C_1 \lambda_Y\theta^{-1}\\
&\leq \sum_{Q^\prime\in\mathcal{Q}(Y_1)}\sum_{T,T^\prime\in\tubes_1(Q^\prime)}\sum_{Q^{\prime\prime}\in Y_1(T^\prime)}c_0^{\eps_1} \lambda_{Y_1}\\
&\lesssim c_0^{\eps_1} \sum_{Q^\prime\in\mathcal{Q}(Y_1)} |\tubes_1(Q)|^2 \theta^{-2}\lambda_{Y_1}^2\\
&\sim c_0^{\eps_1}|\mathcal{R}_1|,
\end{split}
\end{equation}
where on the second-last line we used \eqref{sameNumberOfCubesInEachTube} and on the last line we used \eqref{sizeOfR1}. Thus if $c_0$ is selected sufficiently small (depending on $\eps_1$), then the set of quadruples $(Q,T,Q^\prime,T^\prime,Q^{\prime\prime})\in\mathcal{R}_1$ with $\operatorname{dist}(Q,Q^\prime)$ less than $c_0 C_1^{-1/\eps_1}\theta^{\eps_4/\eps_1}$ is less than $|\mathcal{R}_1|/8$.

An identical argument applies to \eqref{R2Prop4}. For \eqref{R2Prop2},
\begin{equation}
\begin{split}
\sum_{Q^\prime\in\mathcal{Q}(Y_1)}&\sum_{\substack{T,T^\prime\in \tubes_1(Q^\prime)\\ \angle(v(T),v(T^\prime))\leq c_0\theta^{2\eps_4/\eps_2}C_2^{-2/\eps_2} }}|\{(Q,Q^{\prime\prime}\colon (Q,T,Q^\prime,T^\prime,Q^{\prime\prime})\in\mathcal{R}_1 \}|\\
&\lesssim \sum_{Q^\prime\in\mathcal{Q}(Y_1)} |\tubes_1(Q^\prime)| \Big((c_0\theta^{2\eps_4/\eps_2}C_2^{-2/\eps_2})^{\eps_2}C_2\mu_Y\Big)(\lambda_{Y_1}^2\theta^{-2})\\
&\lesssim \sum_{Q^\prime\in\mathcal{Q}(Y_1)}\theta^{-2\eps_4}\mu_{Y_1}^2  (c_0\theta^{2\eps_4/\eps_2}C_2^{-2/\eps_2})^{\eps_2} C_2^2 \lambda_{Y_1}^2\theta^{-2}\\
&\lesssim c_0^{\eps_2}|\mathcal{R}_1|,
\end{split}
\end{equation}
Thus if $c_0$ is selected sufficiently small (depending on $\eps_2$), then the set of quadruples $(Q,T,Q^\prime,T^\prime,Q^{\prime\prime})\in\mathcal{R}_1$ with $\angle(v(T),\ v(T^\prime))$ less than $c_0\theta^{2\eps_4/\eps_2}C_2^{-2/\eps_2}$ is less than $|\mathcal{R}_1|/8$.

Finally, for \eqref{R2Prop3}, we have
\begin{equation}
\begin{split}
\sum_{Q^\prime\in\mathcal{Q}(Y_1)}&\sum_{T,T^\prime\in\tubes_1(Q^\prime)}\sum_{Q^{\prime\prime}\in Y_1(T^\prime)}|\{ Q\subset Y_1(T)\colon \angle(\Pi(Q),\Pi(Q^\prime))\leq c_0\theta^{\eps_4/\eps_3}C_3^{-1/\eps_3}p\}|\\
&\leq \sum_{Q^\prime\in\mathcal{Q}(Y_1)}\sum_{T,T^\prime\in\tubes_1(Q^\prime)}\sum_{Q^{\prime\prime}\in Y_1(T^\prime)}\big(c_0 \theta^{\eps_4/\eps_3}C_3^{-1/\eps_3}\big)^{\eps_3}C_3(\lambda\theta^{-1})\\
&\leq \sum_{Q^\prime\in\mathcal{Q}(Y_1)}\sum_{T,T^\prime\in\tubes_1(Q^\prime)}\sum_{Q^{\prime\prime}\in Y_1(T^\prime)}c_0^{\eps_3} \theta^{2\eps_3} C_3\theta^{-\eps_3}(\lambda_{Y_1}\theta^{-1})\\
&\lesssim c_0^{\eps_3}\sum_{Q^\prime\in\mathcal{Q}(Y_1)} |\tubes_1(Q)|^2 \theta^{-2}\lambda_{Y_1}^2\\
&\sim c_0^{\eps_3}|\mathcal{R}_1|.
\end{split}
\end{equation}
Thus if $c_0$ is selected sufficiently small (depending on $\eps_3$), then the set of quadruples $(Q,T,Q^\prime,T^\prime,Q^{\prime\prime})\in\mathcal{R}_1$ with $\angle(\Pi(Q),\Pi(Q^\prime))$ less than $c_0 \theta^{\eps_4/\eps_3}C_3^{-1/\eps_3}p$ is less than $|\mathcal{R}_1|/8$.

Thus if we select $c_0$ sufficiently small (depending only on $\eps_1,\eps_2,\eps_3,\eps_4$), then $|\mathcal{R}_2|\geq |\mathcal{R}_1|/2$, which establishes \eqref{R2VsR1}. For each quintuple $G=(Q,T,Q^\prime,T^\prime, Q^{\prime\prime})\in \mathcal{R}_2$, define

\begin{equation*}
\rho(G)=\max\{\angle(T^\prime, T_1^{\prime})\colon \exists\ T_1,Q_1^\prime,T_1^\prime\ \textrm{so that}\ (Q, T_1, Q_1^\prime, T_1^\prime,Q^{\prime\prime})\in\mathcal{R}_2\}.
\end{equation*}
See Figure \ref{defnRhoGFig}.
\begin{figure}[h!]
 \centering
\begin{overpic}[width=0.4\textwidth]{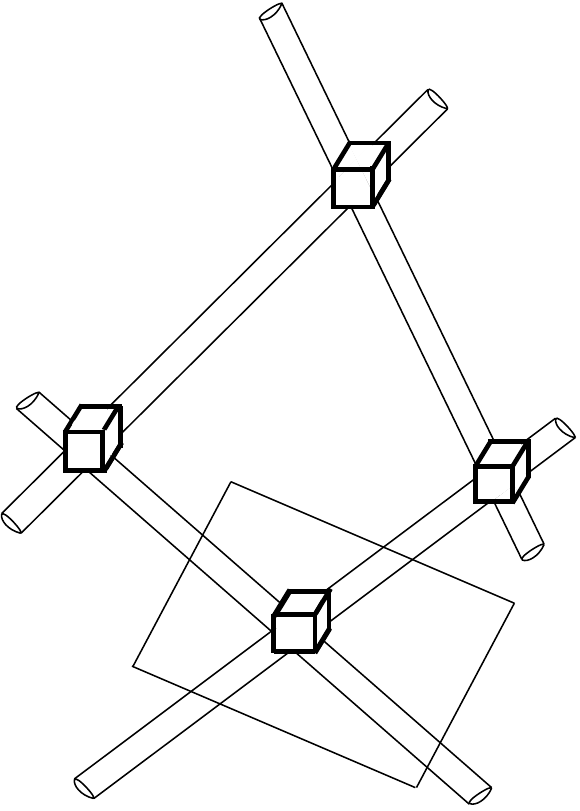}
 \put (34,15) {$Q$}
 \put (61,17) {$\Pi(Q)$}
 \put (15,31) {$T$}
 \put (10,52) {$Q^\prime$}
 \put (25,65) {$T^\prime$}
 \put (48,37) {$T_1$}
 \put (55,61) {$T_1^\prime$}
 \put (43,84) {$Q^{\prime\prime}$}
 \put (66,38) {$Q_1^{\prime}$}
\end{overpic}
 \caption{$\rho(G)$ is the angle between $T^\prime$ and $T_1^\prime$. In this figure, $T$ and $T_1$ make small angle with the plane $\Pi(Q)$ (so in particular, the cubes $Q^\prime$ and $Q_1^\prime$ are contained in the $\sim\theta$ neighborhood of $\Pi(Q)$. $T^\prime$ and $T_1^\prime$ do not make small angle with the plane $\Pi(Q)$.} \label{defnRhoGFig}
\end{figure}

A key geometric observation is that if $G=(Q,T,Q^\prime,T^\prime, Q^{\prime\prime})$, then $\rho(G)$ controls the angle between $\Pi(Q)$ and $\Sigma(T^\prime)$. Specifically, we have that if $\Sigma$ is the 3-plane containing $\Pi(Q)$ and the line coaxial with $T^\prime$, then
\begin{equation}\label{angleControl}
\angle\big(\Sigma,\ \Pi(Q^{\prime\prime})\big)\leq \frac{\theta^{1-\eps_4/\eps_1}}{C_1^{1/\eps_1}\rho(G)}.
\end{equation}
To see this, let $T_1^{\prime}$ be a tube maximizing $\rho(G)$ in the definition above. Let $L^\prime$ be a line with $|L^\prime\cap T^\prime|\sim 1$ so that $L^\prime$ intersects $Q^\prime$ and $\Pi(Q)$.  Let $L_1^{\prime}$ be a line with $|L_1^{\prime}\cap T_1^{\prime}|\sim 1$ so that $L_1^{\prime}$ intersects $L^\prime$; $L^{\prime}\cap L_1^{\prime}\in Q^\prime$, and $L_1^{\prime}$ intersects $\Pi(Q)$. Then $\Pi(Q^\prime)$ makes an angle $\lesssim \theta/\rho(G)$ with the plane spanned by $L^\prime$ and $L_1^{\prime}$. This plane is contained in the 3-plane spanned by $\Pi(Q)$ and $L_1^{\prime}$. Finally, since $\operatorname{dist}(Q,Q^\prime)\gtrsim \theta^{\eps_4/\eps_1}C_1^{-1/\eps_1}$, we have that $L^\prime$ makes an angle $\lesssim\theta^{1-\eps_4/\eps_1}C_1^{-1/\eps_1}$ with the line coaxial with $T^\prime$, and hence $\Pi(Q^\prime)$ makes an angle $\lesssim \frac{\theta^{1-\eps_4/\eps_1}}{C_1^{1/\eps_1}\rho(G)}$ with the 3-plane spanned by $\Pi(Q)$ and the line coaxial with $T^\prime$. See Figure \ref{linesAndPlaneFig}

\begin{figure}[h!]
 \centering
\begin{overpic}[width=0.3\textwidth]{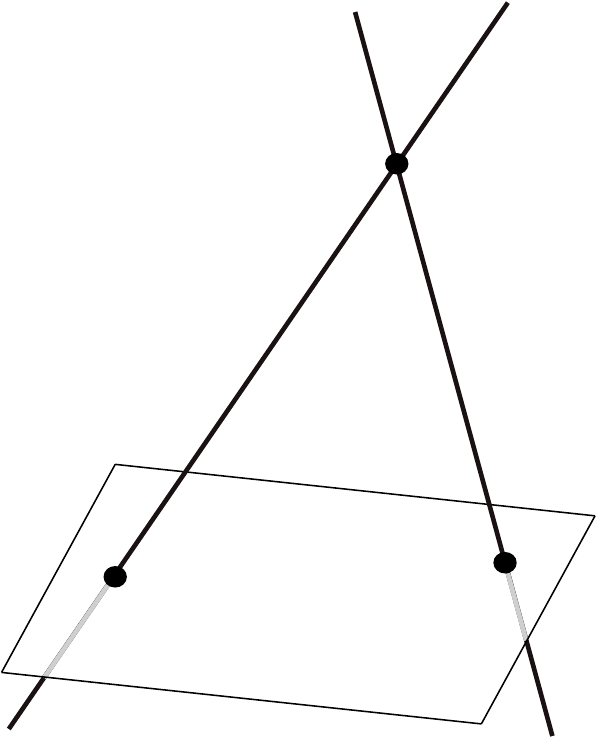}
\put (35,15) {$\Pi(Q)$}
\put (30,54) {$L^\prime$}
\put (62,54) {$L_1^\prime$}
\end{overpic}
 \caption{In this figure, black dots denote points of intersection. Since the three affine linear spaces $L^\prime$, $L_1^\prime$, and $\Pi(Q)$ all pairwise intersect, they must be contained in a common 3-plane. In particular, the plane spanned by $L^\prime$ and $L_1^\prime$ is contained in the 3-plane spanned by $\Pi(Q)$ and $L^\prime$.} \label{linesAndPlaneFig}
\end{figure}

After dyadic pigeonholing, we can select a value of $\rho$ so that there are $\geq |\mathcal{R}_2|/|\log\theta|$ quintuples $G\in\mathcal{R}_2$ with $\rho(G)\sim\rho$; call this set of quintuples $\mathcal{R}_3$. Next, we will show that
\begin{equation}\label{rhoBiggerSigma}
\rho\lesssim |\log\theta|^{1/\eps_4}\theta^{1-\eps_4/\eps_1}C_1^{1/\eps_1}\sigma^{-1}. 
\end{equation}

First, let $\mathcal{S}$ be the set of quadruples $(Q,T,Q^\prime,T^\prime)$ so that there exists a cube $Q^{\prime\prime}$ with $(Q,T,Q^\prime,T^\prime,Q^{\prime\prime})\in\mathcal{R}_2$. Note that for each $(Q,T,Q^\prime,T^\prime)\in\mathcal{S}$, there are at most $\theta^{-1}\lambda_{Y_1}$ cubes $Q^{\prime\prime}$ so that $(Q,T,Q^\prime,T^\prime,Q^{\prime\prime})\in\mathcal{R}_2$. On the other hand, an argument similar to the one used to establish the size of $\mathcal{R}_1$ shows that 
\begin{equation}\label{upperBoundonS}
|\mathcal{S}|\leq \sum_{Q\in\mathcal{Q}(Y_1)}|\tubes_1(Q)|^2 \theta^{-1}\lambda_{Y_1}^{-1}\sim |\mathcal{R}_1|(\theta\lambda_{Y_1}^{-1}).
\end{equation}

Define $\mathcal{S}_1$ to be the set of quadruples $(Q,T,Q^\prime,T^\prime)\in\mathcal{S}$ so that 

\begin{equation}\label{goodQuadruple}
|\{\mathcal{Q}^{\prime\prime}\colon(Q,T,Q^\prime,T^\prime,\mathcal{Q}^{\prime\prime}) \in\mathcal{R}_3\}|\geq c_1|\log\theta|^{-1}\theta^{-1}\lambda_{Y_1}.
\end{equation}
If the constant $c_1$ is chosen sufficiently small, then by \eqref{upperBoundonS} and the fact that $|\mathcal{R}_3|\gtrsim |\log\theta|^{-1}|\mathcal{R}_1|$, we have
\begin{equation*}
|\mathcal{S}_1|\gtrsim |\log\theta|^{-1}|\mathcal{S}|\gtrsim|\log\theta|^{-1}|\mathcal{R}_1|(\theta\lambda_{Y_1}^{-1}).
\end{equation*}

Fix a quadruple $(Q,T,Q^\prime,T^\prime)\in \mathcal{S}_1$. Observe that for each $\mathcal{Q}^{\prime\prime}\subset Y_1(T^\prime)$ with $(Q,T,Q^\prime,T^\prime,\mathcal{Q}^{\prime\prime}) \in\mathcal{R}_3$, by \eqref{angleControl} we have
\begin{equation*}
\angle\big(\Sigma,\ \Pi(Q^{\prime\prime})\big)\leq \frac{\theta^{1-\eps_4/\eps_1}}{C_1^{1/\eps_1}\rho(G)},
\end{equation*}
where $\rho(G)\sim\rho$ since $(Q,T,Q^\prime,T^\prime,\mathcal{Q}^{\prime\prime}) \in\mathcal{R}_3$. This implies
\begin{equation}\label{lowerBoundCubes}
\Big|\Big\{Q^{\prime\prime}\subset Y_1(T^\prime)\colon \angle\big(\Sigma,\ \Pi(Q^{\prime\prime})\big)\leq \frac{\theta^{1-\eps_4/\eps_1}}{C_1^{1/\eps_1}\rho}\Big\}\Big|\gtrsim |\log\theta|^{-1}\theta^{-1}\lambda_{Y_1}.
\end{equation}
On the other hand, since $(\tubes_1,Y_1)$ is $(\eps_4, 100)$-robustly contained in the $\sigma$ neighborhood of 3-planes, we have the estimate
\begin{equation}\label{upperBoundCubes}
\Big|\Big\{Q^{\prime\prime}\subset Y_1(T^\prime)\colon \angle\big(\Sigma,\ \Pi(Q^{\prime\prime})\big)\leq \frac{\theta^{1-\eps_4/\eps_1}}{C_1^{1/\eps_1}\rho}\Big\}\Big|\lesssim  \Big(\frac{\theta^{1-\eps_4/\eps_1}}{C_1^{1/\eps_1}\rho\sigma}\Big)^{\eps_4}\theta^{-1}\lambda_{Y_1}.
\end{equation}
Comparing \eqref{lowerBoundCubes} and \eqref{upperBoundCubes}, we conclude that 
\begin{equation*}
\Big(\frac{\theta^{1-\eps_4/\eps_1}}{C_1^{1/\eps_1}\rho\sigma}\Big)^{\eps_4}\gtrsim |\log\theta|^{-1},
\end{equation*}
and thus
\begin{equation*}
\theta/\rho\gtrsim |\log\theta|^{-1/\eps_4}\theta^{\eps_4/\eps_1}C_1^{1/\eps_1}\sigma.
\end{equation*}
Re-arranging, we obtain \eqref{rhoBiggerSigma}.

Next, by \eqref{angleControl}, each cube $Q^{\prime\prime}\in Y_1(T^\prime)$ with $(Q,T,Q^\prime,T^\prime,\mathcal{Q}^{\prime\prime}) \in\mathcal{R}_3$ satisfies
\begin{equation*}
\angle\big(\Pi(Q^{\prime\prime}), \Sigma\big)\leq \theta^{1-\eps_4/\eps_1}C_1^{1/\eps_1}\rho^{-1},
\end{equation*}
where $\Sigma$ is the 3-plane spanned by $\Pi(Q)$ and the line coaxial with $T^\prime$. Thus if $(Q,T,Q^\prime,T^\prime)\in\mathcal{S}_1$, then 
\begin{equation}\label{mostCubesCloseToSigma}
|\{Q^{\prime\prime}\in Y(T^\prime)\colon \angle\big(\Pi(Q^{\prime\prime}), \Sigma\big)\leq \theta^{1-\eps_4/\eps_1}C_1^{1/\eps_1}\rho^{-1}\}|\gtrsim |\log\theta|^{-1}\theta^{-1}\lambda_Y,
\end{equation}
and thus if we define $q=\theta^{1-\eps_4/\eps_1}C_1^{1/\eps_1}\rho^{-1}$ and $t=c|\log\theta|^{-1}$ for an appropriate constant $c\gtrsim 1$, then  
\begin{equation*}
|\{Q^{\prime\prime}\in Y(T^\prime)\colon \angle\big(\Pi(Q^{\prime\prime}, \Pi(T)\big)\leq p,\ \angle\big(\Pi(Q^{\prime\prime}), \Sigma(T)\big)\leq q,  \angle\big(\Pi(Q^{\prime\prime}), \Sigma\big)\leq q\}|\geq t \theta^{-1}\lambda_Y.
\end{equation*}
If we define 
\begin{equation}\label{defnOfQ}
q=C|\log\theta|^{1/\eps_4}\theta^{1-2\eps_4/\eps_3}C_1^{1/\eps_1}C_2^{2/\eps_2}C_3^{1/\eps_3} \rho^{-1}
\end{equation}
for an appropriate constant $C\lesssim 1$, then by Corollary \ref{prismCor} the sets of vectors
\begin{equation}
\{v\in S^3\colon \angle(v,\Pi(T^\prime)\leq p,\ \angle(v,\Sigma(T^\prime))\leq q\}
\end{equation}
and 
\begin{equation}\label{vectorsPiSigma}
\{v\in S^3\colon \angle(v,\Pi(T^\prime)\leq p,\ \angle(v,\Sigma)\leq q\}
\end{equation}
are comparable, in the sense that the $\sim 1$ dilate of the first set contains the second, and vice-versa. For each $T_0\in \tubes$, define  
\begin{equation}\label{defnWTp}
W(T_0)=\{v\in S^3\colon \angle(v,\Pi(T_0)\leq p,\ \angle(v,\Sigma(T_0))\leq q\}.
\end{equation}

We claim that if $(Q,T,Q^\prime,T^\prime)\in\mathcal{S}$, then the sets $W(T)$ and $W(T^\prime)$ are comparable, in the sense that the $\sim 1$ dilate of the first set contains the second, and vice-versa. We have already shown that $W(T^\prime)$ is comparable to the set \eqref{vectorsPiSigma}, so it suffices to show that $W(T)$ is comparable to the set \eqref{vectorsPiSigma} as well. By hypothesis, $\angle(\Pi(T_0),\Pi(T_1))\lesssim p$ for every pair of tubes $T_0,T_1\in\tubes$, so in particular $\angle(\Pi(T),\Pi(T^\prime))\lesssim p$. Next, note that $\Sigma(T)$ contains the planes $\Pi(Q)$ and $\Pi(Q^\prime)$. Since $T^\prime\in\tubes_1(Q^\prime)$, we have 
\begin{equation}\label{angleVTpPQp}
\angle(v(T^\prime),\Pi(Q^\prime))\leq \theta.
\end{equation}
We also have 
\begin{equation}\label{angleVtVtp}
\angle(v(T),v(T^\prime))\gtrsim\theta^{2\eps_4/\eps_2}C_2^{-2/\eps_2}
\end{equation}
Thus 
\begin{equation*}
\angle\big(\Pi(Q^\prime),\ \operatorname{span}(v(T),v(T))\big) \lesssim \theta^{1-2\eps_4/\eps_2}C_2^{2/\eps_2}.
\end{equation*}
Since 
\begin{equation*}
\angle(\Pi(Q),\Pi(Q^\prime))\gtrsim \theta^{\eps_4/\eps_3}C_3^{-1/\eps_3}p,
\end{equation*}
we conclude that 
\begin{equation*}
\angle(\Sigma,\Sigma(T))\lesssim \theta^{-2\eps_4/\eps_2 -\eps_4/\eps_3}C_2^{2/\eps_2}C_3^{1/\eps_3} \sigma/p,
\end{equation*}
where $\Sigma$ is the 3-plane spanned by $\Pi(Q)$ and the line coaxial with $T^\prime$. In particular, for any 
\begin{equation}\label{admissibilityOfQ}
q\geq\theta^{-2\eps_4/\eps_2 -\eps_4/\eps_3}C_2^{2/\eps_2}C_3^{1/\eps_3} \sigma,
\end{equation}
the sets of vectors
\begin{equation*}
\{v\in S^3\colon \angle(v,\Pi(T_0))\leq p,\ \angle(v,\Sigma(T))\leq q\}
\end{equation*}
and
\begin{equation*}
\{v\in S^3\colon \angle(v,\Pi(T_0))\leq p,\ \angle(v,\Sigma(T))\leq q\}
\end{equation*}
are comparable. By \eqref{rhoBiggerSigma}, the choice of $q$ given by \eqref{defnOfQ} satisfies \eqref{admissibilityOfQ}, so in particular $W(T)$ is comparable to the set \eqref{vectorsPiSigma}, and thus is comparable to $W(T^\prime)$.  

Next we will count the number of triples $(T,Q^\prime,T^\prime)$ so that $(Q,T,Q^\prime,T^\prime)\in\mathcal{S}_1$ for at least one cube $Q$; denote this set of triples by $\mathcal{T}$. If $(T,Q^\prime,T^\prime)$ is such a triple, then there are $\leq \theta^{-1}\lambda_{Y_1}$ cubes $Q$ so that $(Q,T,Q^\prime,T^\prime)\in\mathcal{S}_1$. This implies that
\begin{equation}
|\mathcal{T}|\geq \theta\lambda_{Y_1}^{-1}|\mathcal{S}_1|\gtrsim |\log\theta|^{-1}\sum_{Q\in\mathcal{Q}(Y_1)}|\tubes_1(Q)|^2.
\end{equation}

Let $\mathcal{W}$ be a set of essentially distinct subsets of $S^3$ of the form 
\begin{equation*}
\{v\in S^3\colon \angle(v,\Pi_0)\leq 2p,\ \angle(v,\Sigma)\leq 2q\},
\end{equation*}
where $\Pi_0$ is the plane whose $p$-neighborhood contains every tube from $\tubes$, and $\Sigma$ is a 3-plane in $\RR^4$ containing the origin. For each $W\in\mathcal{W}$, define 
\begin{equation*}
\tubes_W=\{T\in\tubes_1\colon W(T) \subset W\},
\end{equation*}
where $W(T)$ is defined in \eqref{defnWTp}. Observe that 
\begin{equation*}
\sum_{W\in\mathcal{W}}|\tubes_W|\sim \tubes_1,
\end{equation*}
and if $(T,Q^\prime,T^\prime)\in\mathcal{T},$ then there exists some $W\in\mathcal{W}$ so that $T\in\tubes_W$ and $T^\prime\in\tubes_W$. This implies
\begin{equation*}
\sum_{W\in\mathcal{W}}\sum_{Q\in\mathcal{Q}(Y_1)} |\tubes_W(Q)|^2 \gtrsim |\log\theta|^{-1} \sum_{Q\in\mathcal{Q}(Y_1)}|\tubes_1(Q)|^2,
\end{equation*}
and thus by Cauchy-Schwarz, 
\begin{equation}\label{breaksIntoDisjointBalls}
\Big|\bigcup_{T\in\tubes_1}Y_1(T)\Big|\geq |\log\theta|^{-1}\sum_{W\in\mathcal{W}}\Big|\bigcup_{T\in\tubes_W} Y_1(T)\Big|.
\end{equation}

Thus there exists a $\sim |\log\theta|^{-1}$-refinement $Y_2$ of $Y_1$ so that the sets
\begin{equation*}
\Big\{ \bigcup_{T\in\tubes_W}Y_2(T) \Big\}_{W\in\mathcal{W}}
\end{equation*}
are disjoint. This means that each cube $Q\in\mathcal{Q}(Y_2)$ can be uniquely associated to a set $W(Q)\in\mathcal{W}$. Refining the set $\mathcal{W}$ slightly, we can assume that no set $W(T)$ is contained in more than one set $W\in\mathcal{W}$. This refinement of $\mathcal{W}$ induces a $\gtrsim 1$-refinement of $(\tubes_1,Y_1)$; call this new set $(\tubes_2,Y_2)$. Thus each tube $T\in\tubes_2$ can also be uniquely associated to a set in $\mathcal{W}$. Abusing notation slightly, we will call this set $W(T)$ (so now $W(T)$ is always an element of $\mathcal{W}$).

Next, define $\mathcal{R}_4$ to be the set of all quintuples $(Q,T,Q^\prime,T^\prime,Q^{\prime\prime})\in\mathcal{R}_3$ so that $T,T^\prime\in \tubes_2$; $Q,Q^\prime\subset Y_2(T);$ $Q^\prime,Q^{\prime\prime}\subset Y_2(T^\prime)$; if we choose the refinement of $\mathcal{W}$ appropriately, then 
\begin{equation*}
|\mathcal{R}_4|\gtrsim  |\log\theta|^{-O(1)}|\mathcal{R}_3|.
\end{equation*}

Observe that if $(Q,T,Q^\prime,T^\prime,Q^{\prime\prime})\in\mathcal{R}_4$, then $T,Q^\prime,T^\prime,$ and $Q^{\prime\prime}$ are all associated to the same set $W\in\mathcal{W}$. Note that 

\begin{equation*}
|\mathcal{R}_4|\gtrsim |\log\theta|^{-O(1)}|\mathcal{R}_1|\gtrsim |\log\theta|^{-O(1)}|\mathcal{Q}_{Y_1}|\theta^{-2}\lambda_{Y_1}^2\mu_{Y_1}^2.
\end{equation*}
Thus if we define 
\begin{equation*}
\mathcal{R}_5=\{(Q,T,Q^\prime,T^\prime,Q^{\prime\prime})\in \mathcal{Q}_4\colon Q\ \textrm{is part of}\ \geq c|\log\theta|^{-C}\theta^{-2}\lambda_{Y_1}^2\mu_{Y_1}^2 \textrm{quintuples from}\ \mathcal{Q}_4\},
\end{equation*}
then if the constant $c$ is selected sufficiently small and $C$ is selected sufficiently large, we have $|\mathcal{R}_5|\geq|\mathcal{R}_4|/2$. 

By dyadic pigeonholing, we can select a set $\mathcal{W}^\prime\subset\mathcal{W}$ so that each set $W\in\mathcal{W}^\prime$ has roughly the same number of quintuples from $\mathcal{R}_5$ associated to it; define $\mathcal{R}_6$ to be the set of quintuples contained in a set from $\mathcal{W}^\prime$. Define $(\tubes_3,Y_3)$ to be the refinement of $(\tubes_2,Y_2)$ consisting of tubes and cubes associated to some set from $\mathcal{W}^\prime$. Since $|\mathcal{R}_6|\gtrsim |\log\theta|^{-O(1)}|\mathcal{R}_1|$, we have that $(\tubes_3,Y_3)$ is a $\gtrsim |\log\theta|^{-O(1)}$-refinement of $(\tubes_2,Y_2)$. 

Thus if $W_0\in\mathcal{W}$ is the set that minimizes $\big|\bigcup_{T\in\tubes_{W_0}\cap\tubes_3}Y_3(T)\big|$, then 
\begin{equation}
\Big|\bigcup_{T\in\tubes_1}Y_1(T)\Big|\gtrsim |\log\theta|^{-O(1)}|\mathcal{W}|\Big|\bigcup_{T\in\tubes_3\cap\tubes_{W_0}}Y_3(T)\Big|.
\end{equation}

We have $|\tubes_3|\gtrsim |\log\theta|^{-O(1)}|\tubes|,$ and by the hypotheses of Lemma \ref{volumeBdPlainyTubesSpecialCase2Plane}, we have that $\sim |\tubes|/|\Omega|$ tubes from $\tubes$ point in each direction $v\in\Omega$. Thus 
\begin{equation}\label{numberOfDirectionsT3}
|\{v(T)\colon T\in\tubes_3\}|\gtrsim|\log\theta|^{-O(1)}|\Omega|.
\end{equation}

Note that for each $W\in\mathcal{W}$, the set of directions of tubes in $\tubes_3\cap\tubes_{W}$ is contained in a subset of $S^3$ of dimensions $\times 1\times p\times q$. This means that the set of directions of tubes in $\tubes_3\cap\tubes_W$ is contained in a rectangle in $S^3$ of dimensions $\sim 1\times p \times q$. Since the possible directions of tubes in $\tubes_3\cap\tubes_W$ are $\theta$-separated, we have that
\begin{equation}\label{numberOfDirectionsTB}
|\{v(T)\colon T\in \tubes_3\cap\tubes_{W}\}| \lesssim pq \theta^{-3}\lessapprox |\log\theta|^{1/\eps_4}\theta^{-2\eps_4/\eps_3}C_1^{1/\eps_1}C_2^{2/\eps_2}C_3^{1/\eps_3}\rho^{-1}p\theta^{-2}.
\end{equation}

Thus

\begin{equation}
\begin{split}
|\mathcal{W}|&\gtrsim |\log\theta|^{-O(1)} \frac{|\Omega|}{\theta^{-2\eps_4/\eps_3}C_1^{1/\eps_1}C_2^{2/\eps_2}C_3^{1/\eps_3}\rho^{-1}p\theta^{-2}},
\end{split}
\end{equation}
where the implicit constant may depend on $\eps_1,\ldots,\eps_4$. We conclude that 
\begin{equation}\label{allTubesBoundedByPart}
\Big|\bigcup_{T\in\tubes_1}Y_1(T)\Big|\gtrsim  |\log\theta|^{-O(1)} \theta^{2\eps_4/\eps_3}C_1^{-1/\eps_1}C_2^{-2/\eps_2}C_3^{-1/\eps_3} (\theta^3|\Omega|)(\theta^{-1}\rho p^{-1})\Big|\bigcup_{T\in\tubes_3\cap\tubes_{W_0}}Y_3(T)\Big|.
\end{equation}

It remains to estimate $\Big|\bigcup_{T\in\tubes_3\cap\tubes_{W_0}}Y_3(T)\Big|$. Let $(Q_0,T_0,Q_0^\prime,T_0^\prime,Q_0^{\prime\prime})$ be a quintuple associated to $W_0$. Since $(Q_0,T_0,Q_0^\prime,T_0^\prime,Q_0^{\prime\prime})\in\mathcal{R}_5$, there are $\gtrapprox \theta^{-2}\lambda_{Y_1}^2\mu_{Y_1}^2$ quadruples $(T,Q^\prime,T^\prime,Q^{\prime\prime})$ so that the quintuple $(Q_0,T,Q^\prime,T^\prime,Q^{\prime\prime})$ is an element of $\mathcal{R}_4$ and is associated to $W_0$.

Next, we will estimate: amongst these $\gtrapprox \theta^{-2}\lambda_{Y_1}^2\mu_{Y_1}^2$ quadruples, how many distinct cubes $Q^{\prime\prime}$ occur? This quantity is relevant since the volume of $\Big|\bigcup_{T\in\tubes_3\cap\tubes_{W_0}}Y_3(T)\Big|$ is at least $\theta^4$ times the number of distinct cubes $Q^{\prime\prime}$.

\begin{itemize}
\item For each cube $Q^{\prime\prime}$, the set of potential choices of $T^\prime$ must all point in directions that make angle $\lesssim\theta$ with the plane $\Pi(Q^{\prime\prime})$, and these directions must lie in an angular sector of aperture $\lesssim\rho$; thus there are 
\begin{equation}\label{numberTPrime}
\lesssim \theta^{-1}\rho
\end{equation} 
choices for $T^{\prime}$. 
\item For each tube $T^\prime$, the set of potential choices of $Q^\prime$ must all lie in $T^\prime\cap N_{\theta}(\Pi(Q_0))$. By \eqref{R2Prop2} we have $\angle(v(T),v(T^\prime))\gtrsim \theta^{2\eps_4/\eps_2}C_2^{-2/\eps_2}$, and by \eqref{R2Prop3} we have $\angle(\Pi(Q_0),\Pi(Q))\gtrsim \theta^{\eps_4/\eps_3}C_3^{-1/\eps_3}p$; thus $\angle(v(T^\prime),\Pi(Q_0))\gtrsim \theta^{2\eps_4/\eps_2+\eps_4/\eps_3}C_2^{-2/\eps_2}C_3^{-1/\eps_3}p$. We conclude that for each $T^\prime$, there are 
\begin{equation}\label{numberQPrime}
\lesssim \theta^{-2\eps_4/\eps_2-\eps_4/\eps_3}C_2^{2/\eps_2}C_3^{1/\eps_3}p^{-1}
\end{equation}
choices for $Q^\prime$. 
\item For each cube $Q^\prime$, the set of potential $T$ must intersect both $Q_0$ and $Q^\prime$. By \eqref{R2Prop1}, $\operatorname{dist}(Q_0,Q^\prime)\gtrsim \theta^{\eps_4/\eps_1}C_1^{1/\eps_1}$. Since the tubes point in $\theta$-separated directions and satisfy $\angle(v(T),\Pi(Q_0))\leq\theta$, there are 
\begin{equation}\label{numberT}
\lesssim\theta^{-\eps_4/\eps_1}C_1^{1/\eps_1}
\end{equation}
choices for $T$.
\end{itemize}

Multiplying the bounds in \eqref{numberTPrime}, \eqref{numberQPrime}, and \eqref{numberT} and using the fact that $\eps_3<\eps_2<\eps_1$, we conclude that each cube $Q^{\prime\prime}$ is part of $\lesssim \theta^{-4\eps_4/\eps_3}C_1^{1/\eps_1} C_2^{2/\eps_2}C_3^{1/\eps_3}p^{-1}\theta^{-1}\rho$ quintuples. Thus we have the volume bound 
\begin{equation}
\begin{split}
\Big|\bigcup_{T\in\tubes_3\cap\tubes_{W_0}}Y_3(T)\Big|&\gtrapprox \theta^4  \big(\theta^{4\eps_4/\eps_3}C_1^{-1/\eps_1} C_2^{-2/\eps_2}C_3^{-1/\eps_3}p\theta\rho^{-1}\big) \mu_{Y_1}^2\lambda_{Y_1}^2\theta^{-2}.
\end{split}
\end{equation}

Inserting this bound into \eqref{allTubesBoundedByPart}, we have 
\begin{equation}
\begin{split}
\Big|\bigcup_{T\in\tubes_1}Y_1(T)\Big| & \gtrsim |\log\theta|^{-O(1)}\theta^{6\eps_4/\eps_3}C_1^{-2/\eps_1} C_2^{-4/\eps_2}C_3^{-2/\eps_3}(\theta^3|\Omega|)\mu_{Y_1}^2\lambda_{Y_1}^2\theta^2.
\end{split}
\end{equation}

Now, we have $\mu_{Y_1}\Big|\bigcup_{T\in\tubes_1}Y_1(T)\Big| \sim \lambda_{Y_1} (\theta^3|\tubes_1|)$, or 
\begin{equation*}
\mu_{Y_1}\sim \frac{\lambda_{Y_1} (\theta^3|\tubes_1|)}{\Big|\bigcup_{T\in\tubes_1}Y_1(T)\Big|}.
\end{equation*}
Re-arranging, and recalling that $\lambda_{Y_1}\gtrsim\theta^{\eps_4}\lambda_Y$ we conclude that there exists a constant $C_0$ (depending on $\eps_1,\eps_2,\eps_3$ so that
\begin{equation}\label{finalUnionBound}
\begin{split}
\Big|\bigcup_{T\in\tubes}Y(T)\Big|^3&\geq \Big|\bigcup_{T\in\tubes_1}Y_1(T)\Big|^3 \\
&\gtrsim  |\log\theta|^{-C_0} \theta^{C_0\eps_4} C_1^{-2/\eps_1}C_2^{-4/\eps_2}C_3^{-2/\eps_3}\lambda_{Y_1}^4\theta^{2}(\theta^3|\Omega|) (\theta^3|\tubes|)^2.
\end{split}
\end{equation}
If $\eps_4$ is selected sufficiently small (depending on $\eps$ and $C_0$, which in turn depends on $\eps_1,\eps_2,\eps_3)$, then \eqref{finalUnionBound} implies \eqref{volumeBoundPlainyTubesIn3Plane}
\end{proof}

The next result will remove the requirement that the tubes be contained in the $p$ neighborhood of planes.

\begin{prop}\label{volumeBdPlainyTubesSpecialCase}
Let $0<\theta<1$. Let $0<\eps_2<\eps_1<1$. Let $\Omega\subset S^3$ be a set of $\theta$-separated directions. Let $(\tubes,Y)$ be a set of essentially distinct plany $\theta$-tubes and their associated shading. 

Suppose that 

\begin{itemize}
\item There are $\sim|\tubes|/|\Omega|$ tubes from $\tubes$ pointing in each direction $v\in\Omega$. 
\item $(\tubes,Y)$ is $(\eps_1,C_1)$-two-ends.
\item $(\tubes,Y)$ is $(\eps_2,C_2)$-robustly-transverse.
\end{itemize}
Then for each $\eps>0$, there is a constant $c>0$ (depending on $\eps$, $\eps,\eps_1,$ and $\eps_2$) so that
\begin{equation}\label{volumeBoundPlainyTubesPropEqn}
\Big|\bigcup_{T\in\tubes}Y(T)\Big|\geq c C_1^{-1/\eps_1}C_2^{-2/\eps_2}\theta^{\eps}\lambda_Y^{4/3} \theta^{2/3} \big(\theta^3|\Omega|\big)^{1/3}\big(\theta^3|\tubes|\big)^{2/3}.
\end{equation} 
\end{prop}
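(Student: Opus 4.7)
The plan is to reduce to Lemma \ref{volumeBdPlainyTubesSpecialCase2Plane} by partitioning $\tubes$ into sub-collections each of which is contained in the $p$-neighborhood of a single plane, applying that lemma to each piece, and summing the resulting volume bounds using disjointness.

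Choose $\eps_3=\eps/C_0$ for a large constant $C_0=C_0(\eps_1,\eps_2)$ to be fixed later. First apply Lemma \ref{2ends2PlaneLem} with this $\eps_3$ to obtain a scale $\theta\leq p\leq 1$ and a $\lessapprox\theta^{\eps_3}$-refinement $(\tubes_1,Y_1)$ of $(\tubes,Y)$ that is $(\eps_3,100)$-robustly contained in the $p$-neighborhood of planes. Next apply Corollary \ref{disjoint2PlanePieces} to produce a further $\gtrsim 1$-refinement $(\tubes_2,Y_2)$ together with a partition $\tubes_2=\bigsqcup_i\tubes_{2,i}$ where each $\tubes_{2,i}$ lies in the $\lesssim p$-neighborhood of a single plane $\Pi_i$, is $(\eps_3,C_3)$-robustly contained in the $p$-neighborhood of planes with $C_3=2^{1/\eps_3}\cdot 100$, and the shadings $\bigcup_{T\in\tubes_{2,i}}Y_2(T)$ are pairwise disjoint over $i$.

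After dyadic pigeonholing (at a cost of $|\log\theta|^{O(1)}$), I may assume that every $T\in\tubes_2$ satisfies $|Y_2(T)|/|T|\sim\lambda_{Y_2}$; that there is a value $m'$ so that $\sim m'$ tubes of $\tubes_2$ point in each direction of the represented set $\Omega'\subset\Omega$; and that there is a value $n$ (independent of $i$) so that inside each piece $\tubes_{2,i}$, $\sim n$ tubes point in each direction of $\Omega_{2,i}:=\{v(T):T\in\tubes_{2,i}\}$. Crucially, $n\leq m'\leq|\tubes|/|\Omega|=:m_0$ and $|\tubes_{2,i}|\sim n|\Omega_{2,i}|$, and these refinements degrade the two-ends and robust-transversality constants by at most a factor of $\theta^{-O(\eps_3)}$. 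Applying Lemma \ref{volumeBdPlainyTubesSpecialCase2Plane} to each $(\tubes_{2,i},Y_2)$ with direction set $\Omega_{2,i}$ and summing using disjointness yields
\[
\Big|\bigcup_{T\in\tubes}Y(T)\Big|\gtrapprox C_1^{-1/\eps_1}C_2^{-2/\eps_2}C_3^{-1/\eps_3}\theta^{\eps/2}\lambda_Y^{4/3}\theta^{2/3}\cdot\theta^3\,n^{2/3}\sum_i|\Omega_{2,i}|.
\]

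The main obstacle is the bookkeeping in this final step: the direction sets $\Omega_{2,i}$ may overlap, because parallel tubes can be assigned to different planar pieces, so $\sum_i|\Omega_{2,i}|$ cannot be bounded directly by $|\Omega|$. Instead, I combine the identity $n\sum_i|\Omega_{2,i}|=|\tubes_2|\gtrapprox\theta^{O(\eps_3)}|\tubes|$ with the inequality $n\leq m_0$ to get $n^{2/3}\sum_i|\Omega_{2,i}|=n^{-1/3}|\tubes_2|\gtrapprox\theta^{O(\eps_3)}m_0^{-1/3}|\tubes|=\theta^{O(\eps_3)}|\Omega|^{1/3}|\tubes|^{2/3}$. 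Substituting back recovers the factor $(\theta^3|\Omega|)^{1/3}(\theta^3|\tubes|)^{2/3}$ in \eqref{volumeBoundPlainyTubesPropEqn}, while the fixed constant $C_3^{-1/\eps_3}=(2^{1/\eps_3}\cdot 100)^{-1/\eps_3}$ and all of the $\theta^{-O(\eps_3)}$ losses are absorbed into the constant $c$ and into $\theta^\eps$ by choosing $C_0$ sufficiently large in the definition of $\eps_3$.
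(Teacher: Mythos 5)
Your proof is correct and follows essentially the same route as the paper's: decompose $\tubes$ into planar pieces with disjoint shadings via Lemma \ref{2ends2PlaneLem} and Corollary \ref{disjoint2PlanePieces}, apply the planebrush Lemma \ref{volumeBdPlainyTubesSpecialCase2Plane} to each piece (after degrading the two-ends and transversality constants by $\theta^{-O(\eps_3)}$, which is legitimate since $\mu$ and $\lambda$ each decrease by at most the refinement factor), and sum over the pieces. One point worth flagging: the paper closes the summation by asserting, ``by H\"older,'' that
\begin{equation*}
\sum_{i} (\theta^3|\Omega_i^\prime|)^{1/3}(\theta^3|\tubes_i^\prime|)^{2/3}\ \geq\ \Big(\theta^3\sum_{i}|\Omega_i^\prime|\Big)^{1/3}\Big(\theta^3\sum_{i}|\tubes_i^\prime|\Big)^{2/3},
\end{equation*}
but H\"older gives this inequality in the \emph{reverse} direction; the step is only valid once one uses that the per-piece ratio $|\tubes_i^\prime|/|\Omega_i^\prime|$ is bounded above by $m_0=|\tubes|/|\Omega|$. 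Your version --- $\sum_i|\Omega_{2,i}|^{1/3}|\tubes_{2,i}|^{2/3}\sim n^{-1/3}|\tubes_2|\geq m_0^{-1/3}|\tubes_2|\gtrapprox |\Omega|^{1/3}|\tubes|^{2/3}$ --- makes exactly this point explicit and is the correct way to finish; indeed you do not even need to pigeonhole $n$ to be uniform across pieces, since $|\tubes_{2,i}|/|\Omega_{2,i}|\lesssim m_0$ already holds for every $i$ individually.
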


\begin{proof}
Let $\eps>0$. Define $\eps_3=c_3\eps\eps_2$, where $c_3>0$ is a constant that will be determined below. Apply Corollary \ref{disjoint2PlanePieces} to $(\tubes,Y)$ with parameter $\eps_3$, and let $\theta<p<1$ and $(\tubes^{\prime},Y^{\prime})$ be the output from the Corollary. We have that $(\tubes^{\prime},Y^\prime)$ is a $\theta^{\eps_3}$-refinement of $(\tubes,Y)$, and $\tubes^{\prime}=\bigsqcup_{i=1}^K\tubes_i$, where
\begin{equation*}
\Big|\bigcup_{T\in\tubes^\prime}Y^\prime(T)\Big|\sim\sum_{i=1}^K\Big|\bigcup_{T\in\tubes_i}Y^\prime(T)\Big|
\end{equation*}
and each set $\tubes_i$ is contained in the $\lesssim p$-neighborhood of a $2$-plane and is $(\eps_3,100)$-robustly contained in the $p$ neighborhood of planes.

By pigeonholing, there exists a set of indices $I\subset\{1,\ldots,K\}$ so that for each $j\in I$, we have
\begin{equation*}
\sum_{T\in\tubes_j}|Y(T)|\sim \frac{1}{|I|} \sum_{i\in I}\sum_{T\in\tubes_i}|Y(T)|\geq|\log\theta|^{-1}  \sum_{T\in\tubes}|Y(T)|.
\end{equation*}
For each index $i\in I$, define the shading $Y_i(T)=Y(T)$ for each $T\in\tubes_i$. Note that $\lambda_{Y_i}\geq |\log\theta|^{-1}\theta^{\eps_3}\lambda_Y$ for each $i\in I$. We have that for each $i\in I$, $(\tubes_i,Y_i)$ is $(\eps_1,|\log\theta|\theta^{-\eps_3}C_1)$-two-ends, $(\eps_2,|\log\theta|\theta^{-\eps_3}C_2)$-robustly-transverse, and $(\eps_3,100|\log\theta|)$-robustly contained in the $p$ neighborhood of planes.

For each index $i\in I$, let $(\tubes_i^\prime,Y_i^\prime)$ be a refinement of $(\tubes_i,Y_i)$ and let $\Omega_i^\prime\subset\Omega$ so that there are $\sim|\tubes_i^\prime|/|\Omega_i^\prime|$ tubes from $\tubes_i^\prime$ pointing in each direction $v\in\Omega_i^\prime.$ We have that $(\tubes_i^\prime,Y_i^\prime)$ is $(\eps_1,|\log\theta|^2\theta^{-\eps_3}C_1)$-two-ends and $(\eps_2,|\log\theta|^2\theta^{-\eps_3}C_2)$-robustly-transverse, and $(\eps_3,100|\log\theta|^2)$-robustly contained in the $p$ neighborhood of planes.

Thus for each index $i\in I$, the pair $(\tubes_i^\prime,Y^{\prime})$ satisfies the hypotheses of Lemma \ref{volumeBdPlainyTubesSpecialCase2Plane}. Applying Lemma \ref{volumeBdPlainyTubesSpecialCase2Plane} with $\eps/2$ in place of $\eps$, we conclude that for each index $i\in I$, 
\begin{equation*}
\Big|\bigcup_{T\in\tubes_i^\prime}Y^{\prime\prime\prime}(T)\Big|\geq W\lambda_Y^{4/3}\theta^{2/3}(\theta^3|\Omega_i^\prime|)^{1/3}(\theta^3|\tubes_i^\prime|)^{2/3},
\end{equation*}
where 
\begin{equation}\label{defnW}
W = c^\prime|\log\theta|^{-C}C_1^{-1/\eps_1}C_2^{-2/\eps_2} \theta^{(\eps_3/\eps_1+\eps_3/\eps_2)} \theta^{\eps/2}.
\end{equation}

Summing over all indices $i$ and applying H\"older's inequality, we conclude
\begin{equation}\label{volumeBdBigcupYT}
\begin{split}
\Big|\bigcup_{T\in\tubes}Y(T)\Big|&\geq \sum_{i\in I}\Big|\bigcup_{T\in\tubes_i^\prime}Y^{\prime}(T)\Big|\\
&\geq W\lambda_Y^{4/3}\theta^{2/3}\sum_{i\in I} (\theta^3|\Omega_i^\prime|)^{1/3}(\theta^3|\tubes_i^\prime|)^{2/3}\\
&\geq W\lambda_Y^{4/3}\theta^{2/3}\Big(\theta^3\sum_{i\in I}|\Omega_i^\prime|\Big)^{1/3}\Big(\theta^3\sum_{i\in I}|\tubes_i^\prime|\Big)^{2/3}.
\end{split}
\end{equation}
We have $\Big|\bigcup_{i\in I}\Omega_i\Big|\gtrapprox |\Omega|$ and $\sum_{i\in I}|\tubes_i^\prime|\gtrapprox \theta^{\eps_2}|\tubes|$. Thus
\begin{equation}\label{unionOfOmegaAndTubes}
\Big|\bigcup_{T\in\tubes}Y(T)\Big| \gtrapprox W\lambda_Y^{4/3}\theta^{2/3}(\theta^3|\Omega|)^{1/3}(\theta^3|\tubes|)^{2/3}.
\end{equation}
Thus if $c_3>0$ and $c>0$ are selected sufficiently small, then \eqref{volumeBoundPlainyTubesPropEqn} holds.
\end{proof}

\section{Volume bounds for unions of weakly plany tubes}\label{volumeBoundsPlainySec}

In this section we will use random sampling and re-scaling arguments to weaken two of the hypotheses of Proposition \ref{volumeBdPlainyTubesSpecialCase}. First, we will remove the requirement that the collection of tubes be robustly transverse. Second, we will replace the requirement that that all of the tubes passing through a point lie in the $\theta$ neighborhood of a plane with the weaker requirement that these tubes be contained in the $\theta$ neighborhood of a union of planes\footnote{Of course, as the number of planes in the union increases, our bounds will become weaker.}. This result will be stated precisely in Proposition \ref{volumeBdPlainyTubes} below. 

Before we begin, it will be useful to see how the planebrush argument fits into the broader proof strategy for proving Kakeya estimates. In short, given a collection $\tubes$ of $\delta$-tubes in $\RR^4$, we can find a parameter $\delta\leq\theta\leq 1$ so that for each cube $Q\in\mathcal{Q}(Y)$, the tubes in $\tubes_Y(Q)$ make angle at most $\theta$ with a plane, and $\theta$ is the smallest number for which this property holds. We then examine the set $\tubes$ at scale $\theta$ and discover that the tubes from $\tubes$ intersecting a typical $\theta$-cube also cluster into planes. We will then apply the planebrush argument at scale $\theta$.

The following rather technical lemma makes the above statement precise. 

\begin{lem}[Tubes are either plany or trilinear]\label{planyOrTrilinearLem}
Let $(\tubes,Y)$ be a set of $\delta$-tubes and their associated shading. Suppose that $(\tubes,Y)$ is $(\epsilon_1,C_1)$-two-ends and $(\epsilon_2,C_2)$-robustly transverse.

Then there exists a number $\delta\leq\theta\leq 1$; a refinement $(\tubes^\prime,Y^\prime)$ of $(\tubes,Y)$; a set $\Omega\subset S^3$ of $\theta$-separated points; a set $(\tubes_{\theta},Y_\theta)$ of essentially distinct $\theta$-tubes and their associated shading; and numbers $B,B_1,B_2\geq 1$ with $B_1\geq B_2$ and $B_1/B_2 \leq B$ so that the following holds.
\begin{enumerate}[label=(\alph*)]
\item\label{deltaCubePlane} For each $Q\in\mathcal{Q}(Y^\prime)$, there is a plane $\Pi(Q)$ so that $\angle(v(T),\Pi(Q))\leq\theta$ for all $T\in\tubes_Y(Q)$.

\item\label{mostTriplesWedgeThetaItem} For each $Q\in\mathcal{Q}(Y^\prime)$,
\begin{equation}\label{mostTriplesWedgeTheta}
|\{T_1,T_2,T_3\in \tubes_{Y^\prime}(Q)\colon |v(T_1)\wedge v(T_2)\wedge v(T_3)|\gtrsim \theta\}|\gtrapprox|\tubes_{Y^\prime}(Q)|^3.
\end{equation}

\item\label{directionsThetaTubes} Every tube in $\tubes_\theta$ points in a direction from $\Omega$. For each $v\in\Omega$, there are $\sim|\tubes_\theta|/|\Omega|$ tubes from $\tubes_\theta$ that point in direction $v$.

\item\label{thinTubesInsideThetaTubes} Each tube $T_\theta \in\tubes_\theta$ contains $\sim |\tubes^\prime|/|\tubes_\theta|$ tubes from $\tubes^\prime$, and each tube from $\tubes^\prime$ is contained in exactly one tube from $\tubes_\theta$.

\item\label{fineShadingInsideCourseShading} If $T\in\tubes, T_\theta\in\tubes_\theta$, and $T\subset T_\theta$, then $Y^\prime(T)\subset Y_\theta(T_\theta)$.

\item\label{lowerBoundLambdaYTheta} $\lambda_{Y_\theta}\gtrapprox \lambda_{Y^\prime}$. 

\item\label{thetaTubesAreTwoEnds} $(\tubes_\theta,Y_\theta)$ is $(\eps_1,C_1^\prime)$-two-ends, with $C_1^\prime\lessapprox 1$.

\item\label{tubesHaveUniformMult} For each $\delta$-cube $Q\in\mathcal{Q}(Y)$, we have $|\tubes_{Y^\prime}(Q)|\sim \mu_{Y^\prime}$. For each $\theta$-cube $Q\in\mathcal{Q}(Y_\theta)$ we have $|\tubes_{\theta}(Q)|\sim \mu_{Y_\theta}$.

\item\label{innerTubesHaveUniformMult} There is a number $\mu_{\operatorname{fine}}$ so that for each $T_\theta\in\tubes_{\theta}$, we have $\mu_{Y_{T_\theta}}\sim\mu_{\operatorname{fine}}$. 

\item\label{thetaTubesInPlanes} For each $\theta$-cube $Q_\theta\in\mathcal{Q}(Y_\theta)$, there are planes $\Pi_1(Q_\theta),\ldots,\Pi_{B_1}(Q_\theta)$ and collections of tubes $\tubes_{1,\theta}(Q_\theta),\ldots,\tubes_{B_1,\theta} \subset\tubes_\theta(Q_\theta)$ so that for each index $i$, the tubes in $\tubes_{i,\theta}(Q_\theta)$ satisfy $\angle(T_\theta,\Pi_i(Q_\theta))\leq\theta$ and $|\tubes_{i,\theta}|=\mu_{Y_\theta}/B$. Each tube $T\in\tubes_{\theta}(Q)$ is contained in $B_2$ of sets $\{\tubes_{i,\theta}(Q_\theta)\}.$ 

\item\label{productMultiplicity} We have the multiplicity bound
\begin{equation*}
\mu_{Y^\prime}\lessapprox \mu_{Y_{\theta}}\mu_{\operatorname{fine}}/B.
\end{equation*}

\end{enumerate}
\end{lem}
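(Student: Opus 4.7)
The plan is to define $\theta$ via a minimality condition at each $\delta$-cube, then pigeonhole to get uniform behavior, build a $\theta$-tube structure compatible with the $\delta$-tubes, and finally extract the plane structure at scale $\theta$. First, for each $Q\in\mathcal{Q}(Y)$, let $\theta(Q)$ be the smallest number for which the directions $\{v(T) : T\in\tubes_Y(Q)\}$ all lie within angle $\theta(Q)$ of some plane $\Pi(Q)$. By dyadic pigeonholing over the $O(\log\delta^{-1})$ possible scales, we pass to a refinement $(\tubes^\prime, Y^\prime)$ and fix a common value of $\theta$ for which $\theta(Q)\sim\theta$ on all surviving cubes; this gives item \ref{deltaCubePlane}. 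For item \ref{mostTriplesWedgeThetaItem}, observe that by minimality of $\theta(Q)$, one cannot find a smaller plane-neighborhood capturing almost all directions: if most triples satisfied $|v_1\wedge v_2\wedge v_3|\ll\theta$, then selecting two representative directions $v_1,v_2$ with $|v_1\wedge v_2|\gtrsim\theta$ (such a pair exists or the directions cluster near a line) would produce a plane $\operatorname{span}(v_1,v_2)$ containing most third vectors to within angle $\ll\theta$, contradicting minimality. A further $\approx 1$-refinement equalizes $|\tubes_{Y^\prime}(Q)|\sim\mu_{Y^\prime}$ on each $\delta$-cube, giving the first half of \ref{tubesHaveUniformMult}.

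Next, I would construct $\tubes_\theta$ by covering the space of essentially distinct $\theta$-tubes using $\theta$-separated directions and $\theta$-cube position parameters, assigning each $T\in\tubes^\prime$ to the unique $\theta$-tube $T_\theta$ containing it. Dyadic pigeonholing on the number of $\delta$-tubes per $\theta$-tube and on direction multiplicity yields items \ref{directionsThetaTubes} and \ref{thinTubesInsideThetaTubes}. Define $Y_\theta(T_\theta)$ as the union of $\theta$-cubes $Q_\theta$ with $Q_\theta\cap Y^\prime(T)\ne\emptyset$ for some $T\subset T_\theta$; this gives \ref{fineShadingInsideCourseShading}, and a volume/incidence comparison yields \ref{lowerBoundLambdaYTheta}. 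The two-ends item \ref{thetaTubesAreTwoEnds} at scale $\theta$ follows because for a ball $B(x,r)$ with $r\geq\theta$, the number of $\theta$-cubes in $Y_\theta(T_\theta)\cap B(x,r)$ is controlled by the number of $\delta$-cubes in the union $\bigcup_{T\subset T_\theta}Y^\prime(T)\cap B(x,r)$, which, via the $(\eps_1,C_1)$-two-ends property applied tube-by-tube and the fact that the $\delta$-tubes inside $T_\theta$ are nearly parallel, gives a clean bound in terms of $\lambda_{Y_\theta}$. A standard $\lessapprox 1$-refinement at scale $\theta$ then enforces $|\tubes_\theta(Q_\theta)|\sim\mu_{Y_\theta}$ (completing \ref{tubesHaveUniformMult}) and, by pigeonholing over the possible dyadic values of the inner multiplicity, yields \ref{innerTubesHaveUniformMult}.

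For items \ref{thetaTubesInPlanes} and \ref{productMultiplicity}, fix a $\theta$-cube $Q_\theta\in\mathcal{Q}(Y_\theta)$. For each $\delta$-cube $Q\subset Q_\theta$ which lies in $Y^\prime(T)$ for some $T\subset T_\theta$ with $T_\theta\in\tubes_\theta(Q_\theta)$, item \ref{deltaCubePlane} associates a plane $\Pi(Q)$ such that every $\theta$-tube in $\tubes_\theta(Q_\theta)$ whose interior $\delta$-tubes pass through $Q$ makes angle $\lesssim\theta$ with $\Pi(Q)$. Collecting these planes across the $\delta$-cubes inside $Q_\theta$ and the $\theta$-tubes through it produces a bipartite incidence structure between $\theta$-tubes in $\tubes_\theta(Q_\theta)$ and candidate planes. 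Dyadic pigeonholing on the popularity of each plane (how many $\theta$-tubes lie within angle $\theta$ of it) and on the number of planes each $\theta$-tube belongs to yields a family of $B_1$ planes $\Pi_1(Q_\theta),\ldots,\Pi_{B_1}(Q_\theta)$, each containing exactly $\mu_{Y_\theta}/B$ $\theta$-tubes from $\tubes_{i,\theta}(Q_\theta)$, with each tube lying in exactly $B_2$ of them. The multiplicity identity \ref{productMultiplicity} is then a direct count: at a $\delta$-cube $Q\subset Q_\theta$, the $\mu_{Y^\prime}$ $\delta$-tubes through $Q$ live inside $\theta$-tubes from a single plane family (since $\Pi(Q)$ is determined), so their total count factorizes as (number of $\theta$-tubes in one plane family)$\times$(fine multiplicity) $\sim (\mu_{Y_\theta}/B)\mu_{\operatorname{fine}}$ times the weight $B$ accounting for tubes belonging to multiple planes.

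The main obstacle is item \ref{thetaTubesInPlanes}, which requires decomposing the $\theta$-tubes at a typical $\theta$-cube into a clean family of planes with a prescribed overlap pattern parametrized by $B_1,B_2,B$. The plane $\Pi(Q)$ varies with $Q\subset Q_\theta$, and the challenge is to group the resulting planes into dyadic equivalence classes whose overlap with the $\theta$-tubes is uniform. Executing this cleanly—so that each surviving $\theta$-tube belongs to exactly $B_2$ plane-classes rather than a variable number—requires iterating pigeonhole on both the plane side and the tube side, losing only polylog factors at each stage, with the constant $B_1/B_2\leq B$ emerging as the accounting bookkeeper. The remaining items, by contrast, are essentially a sequence of standard refinements.
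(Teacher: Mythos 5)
Your overall architecture matches the paper's: pigeonhole at each cube to find the planarity scale $\theta$, build a system of $\theta$-tubes as containers for the $\delta$-tubes, transfer the two-ends property to scale $\theta$, and finally read off the $\theta$-scale plane clusters from the $\delta$-scale planes $\Pi(Q)$. However, there are two places where the argument as you have written it would not go through, and both are points that the paper's proof handles with a specific device you omit.

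\textbf{Item (b) --- the trilinear lower bound.} Defining $\theta(Q)$ as the smallest radius for which \emph{all} directions of $\tubes_Y(Q)$ lie within $\theta(Q)$ of a plane does not give the positive fraction of $\theta$-trilinear triples. Two things go wrong. First, minimality over ``all directions'' permits all but one direction to lie within angle $\theta/1000$ of a plane, with a single outlier forcing $\theta(Q)\sim\theta$; in that scenario almost every triple has tiny wedge product and \eqref{mostTriplesWedgeTheta} fails. To fix this one must use a robust (two-ends style) definition of $\theta(Q)$: refine the shading so that a constant fraction of the directions are at distance $\sim\theta(Q)$ from $\Pi(Q)$, not merely $\leq\theta(Q)$. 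Second, your contradiction invokes a pair with $|v_1\wedge v_2|\gtrsim\theta$; if that is all you have, then $|v_1\wedge v_2\wedge v_3|\ll\theta$ only gives $\dist(v_3,\operatorname{span}(v_1,v_2))\ll 1$, which is not a contradiction. You need $|v_1\wedge v_2|\gtrsim 1$, and this is exactly what the $(\eps_2,C_2)$-robust transversality hypothesis provides and why the paper cites it at the start. Your sketch never uses robust transversality, which is necessary here (without it all the tubes through $Q$ could be nearly parallel and (b) would be false).

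\textbf{Item (g) --- two-ends at scale $\theta$.} Defining $Y_\theta(T_\theta)$ as the union of $\theta$-cubes that meet some fine shading and then asserting that the count of $\theta$-cubes in a ball ``is controlled by'' the count of $\delta$-cubes does not give the two-ends bound. The two-ends condition at scale $\theta$ is a statement about the \emph{fraction} of $\theta$-cubes in a ball relative to the total $\lambda_{Y_\theta}\theta^{-1}$, and this fraction is not comparable to the fraction of $\delta$-cubes unless every nonempty $\theta$-cube of $Y_\theta(T_\theta)$ carries roughly the same total number of contributing $\delta$-cubes. This is exactly the uniformization parameter $M$ the paper introduces (pass to a sub-shading $Y_2$ so that $\sum_{T\subset T_\theta}|\{Q\subset Y_2(T)\cap Q_\theta\}|\sim M$ whenever nonempty), and the computation of $\lambda_{Y_\theta}$ in terms of $M$, $\delta/\theta$, and $|\tubes_3|/|\tubes_\theta|$ is what lets the two-ends bound at scale $\delta$ divide through to give the bound at scale $\theta$. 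Without introducing $M$, the transfer and the lower bound on $\lambda_{Y_\theta}$ in item (f) are not justified. Your ``standard refinement'' for items (h),(i) comes after you claim (g), so it cannot retroactively supply this.

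Your treatment of items (j) and (k) is in the right spirit (the paper derives the plane clustering directly from the bound $|\{T_\theta\in\tubes_\theta(Q_\theta):\angle(\Pi(Q),v(T_\theta))\leq\theta\}|\gtrsim\mu_{Y'}/\mu_{\operatorname{fine}}$ rather than from a general bipartite pigeonhole, but both roads lead to the same place), though the factor-of-$B$ accounting in your final sentence appears to give $\mu_{Y_\theta}\mu_{\operatorname{fine}}$ rather than $\mu_{Y_\theta}\mu_{\operatorname{fine}}/B$ and needs to be redone.
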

\begin{proof}
Since $(\tubes,Y)$ is $(\epsilon_2,C_2)$-robustly transverse, there exists a number $\delta\leq\theta\leq 1$ (we can suppose that $\theta$ is an integer multiple of $\delta$) and a refinement $(\tubes_1,Y_1)$ of $(\tubes,Y)$ so that $|\tubes_{Y_1}(Q)|\sim\mu_{Y_1}$ for all $Q\in\mathcal{Q}(Y_1)$, and Items \ref{deltaCubePlane} and \ref{mostTriplesWedgeThetaItem} hold for $(\tubes_1,Y_1)$. Note that Item \ref{deltaCubePlane} will continue to hold for any $|\log\delta|^{-O(1)}$-refinement $(\tubes_1^\prime,Y_1^\prime)$ of $(\tubes_1,Y_1)$, and Item \ref{mostTriplesWedgeThetaItem} will hold for every cube $Q\in\mathcal{Q}(Y_1^\prime)$ with $|\tubes_{Y_1^\prime}(Q)|\sim\mu_{Y_1^\prime}.$

Let $\tubes_{\theta,1}$ be a set of essentially distinct $\theta$-tubes so that the following holds.
\begin{itemize}
\item Each tube $T\in\tubes_1$ is contained in at most one $\theta$-tube from $\tubes_{\theta,1}$.
\item
\begin{equation*}
\sum_{T_\theta\in\tubes_{\theta,1}}\sum_{\substack{T\in\tubes_1\\ T\subset T_\theta}}|Y_1(T)|\gtrsim \sum_{T\in\tubes_1}|Y_1(T)|.
\end{equation*}
\end{itemize}
For each $T_\theta\in\tubes_{\theta,1},$ define 
\begin{equation*}
\tubes_1(T_\theta)=\{T\in\tubes_1\colon T\subset T_\theta\}.
\end{equation*}
Let 
\begin{equation*}
\tubes_2=\bigcup_{T\in\tubes_\theta}\tubes_1(T_\theta).
\end{equation*}
For each $T\in\tubes_2$, let $Y_2(T)\subset Y_1(T)$ be a shading so that $(\tubes_2,Y_2)$ is a refinement of $(\tubes_1,Y_1)$, and there exists a number $M$ so that for each $T_\theta\in\tubes_{\theta,1}$ and each $\theta$-cube $Q_\theta$ intersecting $T_\theta$, we have that either $Q_\theta\cap \bigcup_{T\in\tubes_2(T_\theta)}Y_2(T)$ is empty, or 
\begin{equation}\label{numberOfSmallCubesHittingQTheta}
\sum_{T\in\tubes_2(Y_\theta)} |\{Q\colon Q\subset Y_2(T)\cap Q_\theta\} |\sim M.
\end{equation}
For each $T_\theta\in\tubes_{\theta,1}$, define $Y_{1,\theta}$ to be the union of those $\theta$-cubes $Q_\theta$ that intersect $T_\theta$ for which \eqref{numberOfSmallCubesHittingQTheta} holds. Let $\tubes_{\theta}\subset \tubes_{\theta,1}$ so that $(\tubes_{\theta},Y_{\theta})$ is a refinement of $(\tubes_{\theta,1},Y_{\theta,1})$ (here the shading $Y_{\theta}$ is just the restriction of $Y_{\theta,1}$ to the tubes in $\tubes_{\theta}$), and the following properties hold.
\begin{itemize}
\item 
\begin{equation}\label{allTThetaTheSame}
|Y_{\theta}(T_\theta)|/|T_\theta|\sim \lambda_{Y_{\theta}}\quad\textrm{for each}\ T_\theta\in\tubes_{\theta}.
\end{equation}
\item $|\tubes_2(T_\theta)|$ has approximately the same size (up to a factor of two) for each $T_\theta\in\tubes_{\theta}$.
\item There is a set $\Omega\subset S^3$ of $\theta$-separated points so that each tube $T_\theta\in \tubes_{\theta}$ points in a direction from $\Omega$, and there are $\sim |\tubes_{\theta}|/|\Omega|$ tubes from $\tubes_{\theta}$ pointing in each direction.
\end{itemize}

Define $\tubes_3=\bigcup_{T_\theta\in\tubes_{\theta}}\tubes_2(T_\theta)$ and define $Y_3(T)=Y_2(T)$ for each $T\in\tubes_3$. Note that $(\tubes_3,Y_3)$ is a refinement of $(\tubes_2,Y_2)$ (so $(\tubes_3,Y_3)$ is $(\eps_1,C_1^\prime)$-two ends for some $C_1^\prime\lessapprox C_1$), and that \eqref{numberOfSmallCubesHittingQTheta} continues to hold with $(\tubes_3,Y_3)$ in place of $(\tubes_2,Y_2)$.

In particular, there exists at least one tube $T\in\tubes_3$ with $|Y(T)|/|T|\geq\lambda_{Y_3}\gtrapprox\lambda_Y$. But since $Y(T)\subset Y_{\theta}(T_\theta)$ for some tube $T_\theta\in \tubes_{\theta}$, this implies $|Y_{\theta}|/|T_\theta|\geq |Y(T)|/|T| \geq \lambda_{Y_3}$. By \eqref{allTThetaTheSame}, this implies $\lambda_{Y_{\theta}}\gtrsim \lambda_{Y_3}$. Since $\lambda_{Y^\prime}$ (to be defined below) will satisfy $\lambda_{Y_3}\approx\lambda_{Y^\prime}$, this will establish Item \ref{lowerBoundLambdaYTheta}.

At this point, $(\tubes_3,Y_3)$ satisfies Items \ref{deltaCubePlane}; $(\tubes_{\theta}, Y_{\theta})$ satisfies Items \ref{directionsThetaTubes} and \ref{thinTubesInsideThetaTubes}; and the pair $(\tubes_3,Y_3)$ and $(\tubes_{\theta}, Y_{\theta})$ satisfies Item \ref{fineShadingInsideCourseShading}.
 
Observe that 
\begin{equation*}
\begin{split}
\lambda_{Y_{\theta}}&=\frac{1}{|\tubes_{\theta}|}\sum_{T_\theta\in\tubes_{\theta}}|Y_{\theta}(T_\theta)|/|T_\theta|\\
& \sim \frac{1}{\theta^3|\tubes_{\theta}|}\sum_{T_\theta\in\tubes_{\theta}}\sum_{Q_\theta\subset Y_{\theta}(T_\theta)}|Q_\theta|\\
& \sim \frac{1}{\theta^3|\tubes_{\theta}|}\sum_{T_\theta\in\tubes_{\theta,2}}\sum_{Q_\theta\subset Y_{\theta,2}(T_\theta)} M^{-1}(\theta/\delta)^4 \sum_{T\in\tubes_2(T_\theta)}|Q_\theta\cap Y_3(T)|\\
& \sim \frac{\theta}{\delta^4M|\tubes_{\theta}|}\sum_{T\in\tubes_3}|Y_3(T)|\\
& = (\theta/\delta) M^{-1} \frac{1}{|\tubes_{\theta}|}  \sum_{T\in\tubes_3}|Y_3(T)|/|T|\\
& = (\theta/\delta) M^{-1} \frac{|\tubes_3|}{|\tubes_{\theta}|}  \lambda_{Y_3}.
\end{split}
\end{equation*}

Re-arranging, we have
\begin{equation}\label{lambdaY3VsYTheta}
\lambda_{Y_3}\sim (\delta/\theta)M  \frac{|\tubes_{\theta}|}{|\tubes_3|}\lambda_{Y_{\theta}}.
\end{equation}
 
Next we will show that $(\tubes_{\theta},Y_{\theta})$ is $(\eps_1,C_1^\prime)$-two-ends, where $C_1^\prime\lessapprox C_1$. Indeed, let $T_\theta\in\tubes_{\theta}$ and let $B(x,r)\subset\RR^4$ be a ball of radius $r$. We have
\begin{equation}
\begin{split}
|\{Q_\theta\colon &  Q_{\theta}\subset Y_{\theta}(T_\theta)\cap B(x,r)\}|\\
&\sim M^{-1} \sum_{T\in\tubes_3(T_\theta)}|\{Q\colon Q\subset Y_3(T)\cap B(x,r)\}|\\
& \lessapprox M^{-1}|\tubes_3(T_\theta)| r^{\eps}C_1\lambda_{Y_3}\delta^{-1}\\
&\sim M^{-1} \frac{|\tubes_3|}{|\tubes_{\theta,2}|} r^{\eps}C_1\lambda_{Y_3}\delta^{-1}\\
&\sim r^{\eps_1}C_1 \lambda_{Y_3},
\end{split}
\end{equation}
where on the final line we used \eqref{lambdaY3VsYTheta}. We conclude that $(\tubes_{\theta},Y_{\theta})$ satisfies Item \ref{thetaTubesAreTwoEnds}. 

Let $(\tubes^\prime,Y^\prime)$ be a refinement of $(\tubes_3,Y_3)$ so that there is a number $\mu_{\operatorname{fine}}$ so that for each $T_\theta\in\tubes_{\theta}$, we have $\mu_{Y_{T_\theta}}\sim\mu_{\operatorname{fine}}$, and for each $Q\in\mathcal{Q}(Y^\prime)$, we have $|\tubes^\prime(Q)|\sim \mu_{Y^\prime}$. We conclude that $(\tubes^\prime,Y^\prime)$ and $(\tubes_\theta,Y_{\theta})$ satisfy Items \ref{deltaCubePlane} through \ref{innerTubesHaveUniformMult}

Observe that for each $Q\in  \mathcal{Q}(Y^\prime)$, there are $\sim \mu_{Y^\prime}/\mu_{\operatorname{fine}}$ tubes $T_\theta\in\tubes_\theta$ with $Q\in \mathcal{Q}(Y_{T_\theta})$. In particular, if $Q_{\theta}$ is the $\theta$-cube containing $Q$, then 

\begin{equation}\label{numberOfTubesInPlane}
|\{T_\theta\in\tubes_{\theta}(Q_\theta)\colon \angle(\pi(Q),V(T_\theta))\leq\theta\}|\gtrsim \mu_{Y^\prime}/\mu_{\operatorname{fine}}.
\end{equation}

Thus for each $\theta$ cube $Q_\theta\in\mathcal{Q}(Y_\theta)$, we can select sets $\tubes_{1,\theta}(Q_\theta),\ldots,\tubes_{B^\prime,\theta}(Q_\theta)$ that satisfy Item \ref{thetaTubesInPlanes}, with some $B\leq \mu_{Y^\prime}/\mu_{\operatorname{fine}}$. Inequality \eqref{numberOfTubesInPlane} implies that that $(\tubes^\prime,Y^\prime)$ and $(\tubes_\theta,Y_{\theta})$ satisfy Item \ref{productMultiplicity}.
\end{proof}

\subsection{The planebrush argument for weakly plany tubes}

We can now state Proposition \ref{volumeBdPlainyTubes}, which is the main result of this section. Note that hypotheses of Proposition \ref{volumeBdPlainyTubes} have been chosen to match the conclusions of Lemma \ref{planyOrTrilinearLem}.

\begin{prop}\label{volumeBdPlainyTubes}
Let $0<\theta<1$ and let $0<\eps_1<1$. Let $\Omega\subset S^3$ be a set of $\theta$-separated directions. Let $(\tubes,Y)$ be a set of essentially distinct plany $\theta$-tubes and their associated shading. Suppose that 
\begin{itemize}
\item There are $\sim|\tubes|/|\Omega|$ tubes from $\tubes$ pointing in each direction $v\in\Omega$. 
\item $(\tubes,Y)$ is $(\eps_1,C_1)$-two-ends.
\item There are numbers $B,B_1,B_2$ with $B_1\geq B_2$ and $B_1/B_2 \leq B$ so that for each $Q\in\mathcal{Q}(Y)$, there are planes $\Pi_1(Q),\ldots,\Pi_{B_1}(Q)$ and collections of tubes $\tubes_{1}(Q),\ldots,\tubes_{B_1}(Q) \subset\tubes(Q)$ so that for each index $i$, the tubes in $\tubes_{i}(Q)$ satisfy $\angle(T,\Pi_i(Q))\leq\theta$ and $|\tubes_{i}|=\mu_{Y}/B$. Each tube $T\in\tubes(Q)$ is contained in $B_2$ sets from $\{\tubes_{i}(Q)\}.$ 
\end{itemize}
Then for each $\eps>0$, we have

\begin{equation}
\Big|\bigcup_{T\in\tubes}Y(T)\Big|\geq c C_1^{-C/\eps_1}\theta^{\eps}\lambda_Y^{4/3} \theta^{2/3} B^{-4/3}\big(\theta^3|\Omega|\big)^{1/3}\big(\theta^3|\tubes|\big)^{2/3}.
\end{equation}
Here $C$ is an absolute constant and $c>0$ depends on $\eps$ and $\eps_1$.
\end{prop}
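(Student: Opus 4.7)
I would reduce Proposition~\ref{volumeBdPlainyTubes} to Proposition~\ref{volumeBdPlainyTubesSpecialCase} via two moves: a random plane-selection that collapses the $B_1$ plane-classes at each cube to a single plane (costing a factor of $B$ in $\lambda$), and an analog of the robust-transversality reduction from Proposition~\ref{robustTransReduction} to supply the robust-transversality hypothesis required by the special case.

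\emph{Plane selection.} For each $Q \in \mathcal{Q}(Y)$ independently pick a uniform index $i(Q) \in \{1,\ldots,B_1\}$ and set
\[
Y_1(T) := \bigcup\{Q \subset Y(T) : T \in \tubes_{i(Q)}(Q)\}.
\]
Since each tube $T$ with $Q \subset Y(T)$ lies in exactly $B_2$ of the $B_1$ plane-classes at $Q$ and $B_1/B_2 \leq B$, we have $\mathbb{E}|Y_1(T)| \geq |Y(T)|/B$, so a realization exists with $\sum_T|Y_1(T)| \geq \sum_T|Y(T)|/B$. After dyadically refining to equalize shading densities, the pair $(\tubes^\prime,Y_1)$ has $\lambda_{Y_1} \gtrapprox \lambda_Y/B$ and is plany in the strong one-plane-per-cube sense: $\tubes_{Y_1}(Q) \subset \tubes_{i(Q)}(Q)$ lies in the $\theta$-neighborhood of $\Pi(Q) := \Pi_{i(Q)}(Q)$ for every $Q \in \mathcal{Q}(Y_1)$. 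A further standard pigeonholing restores $\sim|\tubes^\prime|/|\Omega^\prime|$ tubes per direction for some $\Omega^\prime \subset \Omega$.

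\emph{RT reduction and rescaling.} Imitate the proof of Proposition~\ref{robustTransReduction}: for each $Q \in \mathcal{Q}(Y_1)$ apply Lemma~\ref{twoEndsBuildingBlock} to the metric space $X_Q = \{v(T) : T \in \tubes_{Y_1}(Q)\} \subset S^3$ with a small parameter $\eps_2$, obtaining a common radius $r \in [\theta,1]$ and direction centers $v_Q$; restrict $Y_1$ to cubes where $\angle(v(T), v_Q) \leq 2r$. Partition interacting tubes into disjoint $10r$-tubes $S$ and, for each $S$, rescale by the linear map sending $S$ to the unit ball. Planyness survives because linear maps carry planes to planes, and the two-ends, robust-transversality, and uniform tubes-per-direction conditions are all preserved (the last one up to one more round of pigeonholing). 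Apply Proposition~\ref{volumeBdPlainyTubesSpecialCase} with $\eps/2$ in place of $\eps$, undo the Jacobian $r^4$, and sum over $S$. Subadditivity of $x \mapsto x^{2/3}$ together with the uniform tubes-per-direction count collapses $\sum_S |\tubes_S|^{2/3}|\Omega_S|^{1/3} \gtrsim |\tubes|^{2/3}|\Omega|^{1/3}$, while the identity $b = 2/3 \geq (4-d)/3 = 2/9$ (with $d = 10/3$) ensures the powers of $r$ cancel favorably exactly as in Proposition~\ref{robustTransReduction}. Substituting $\lambda_{Y_1}^{4/3} \gtrapprox \lambda_Y^{4/3}/B^{4/3}$ yields the target.

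\emph{Main obstacle.} The delicate point is propagating the $(\eps_1, C_1)$-two-ends hypothesis through the plane-selection: a naive $Y_1 \subset Y$ majorization bumps the two-ends constant up to $O(BC_1)$, which upon applying Proposition~\ref{volumeBdPlainyTubesSpecialCase} would insert a spurious $B^{-1/\eps_1}$ factor incompatible with the clean $C_1^{-C/\eps_1}B^{-4/3}$ shape of the target. One fixes this by a concentration-and-union-bound argument: for tubes $T$ and balls $B(x,r)$ with $|\{Q \subset Y(T)\cap B(x,r)\}| \gtrsim B\log^{O(1)}(\theta^{-1})$, Chernoff gives $|\{Q \subset Y_1(T)\cap B(x,r)\}| \sim |\{Q \subset Y(T)\cap B(x,r)\}|/B$ for a typical realization, preserving the two-ends constant up to log factors; for balls with fewer cubes the trivial bound, together with $r \geq \theta$ and the $r^{\eps_1}$ slack in the two-ends definition, handles the remaining cases. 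The rest is routine bookkeeping along the lines of Proposition~\ref{robustTransReduction}.
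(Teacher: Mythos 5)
Your proposal matches the paper's proof essentially exactly: the paper likewise performs a random selection of a plane-class index at each cube (viewing the induced shading as an independent-coin refinement with probability $p = B_2/B_1 \geq B^{-1}$), uses a Chernoff/union-bound argument (Lemma \ref{twoEndsIntervalSample} and Corollary \ref{twoEndsRandomRefinement}) to preserve the two-ends constant up to logarithmic factors, and then reduces the resulting single-plane-per-cube statement (Proposition \ref{volumeBdPlainyTubesB1}) to the robustly transverse version (Proposition \ref{volumeBdPlainyTubesSpecialCase}) by a robust-transversality reduction modeled on Proposition \ref{robustTransReduction}. You merely fold the two reductions into one argument rather than passing through the intermediate Proposition \ref{volumeBdPlainyTubesB1}; the mathematical content, including the identification of the Chernoff step as the crux, is the same.
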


\subsection{Reduction to the strongly plany case}
If $(\tubes,Y)$ is a set of $\theta$-tubes and their associated shading that satisfy the hypotheses of Proposition \ref{volumeBdPlainyTubes}, then there exists a $\sim B^{-1}$-refinement $(\tubes^\prime,Y^\prime)$ of $(\tubes,Y)$ so that for each $Q\in\mathcal{Q}(Y^\prime)$, the tubes in $\tubes_{Y^\prime}(Q)$ all lie in the $\theta$-neighborhood of a plane. Indeed, for each cube $Q\in\mathcal{Q}(Y)$, simply select the plane $\Pi(Q)$ whose $\theta$ neighborhood contains the largest number of tubes from $\tubes_{Y}(Q)$. However, if the refinement $(\tubes^\prime,Y^\prime)$ is selected in this way, then it is possible that $(\tubes^\prime,Y^\prime)$ will no longer be $(\eps_1,C_1)$-two-ends. In this section, we will show that it is possible to select the refinement $(\tubes,Y^\prime)$ a bit more carefully and preserve the property of being two-ends.

\begin{lem}\label{twoEndsIntervalSample}
Let $\eps>0$. Let $A\subset[N]=\{1,\ldots,N\}$ with $|A|>N^{\eps}$. Let $C\geq 1$ and let $I\subset N$ be an interval satisfying
\begin{equation}
|A\cap I|\leq T.
\end{equation}

Let $T^{-1}\leq p\leq 1$. Let $A^\prime\subset A$ be obtained by randomly selecting each element of $A$ independently with probability $p$. Then
\begin{equation*}
\operatorname{Pr}\Big(|A^\prime\cap I|\geq 4 (\log N)^{10} pT \Big)\lesssim N^{-10}.
\end{equation*}
\end{lem}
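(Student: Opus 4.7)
The plan is to observe that $|A^\prime \cap I|$ is a sum of $|A \cap I| \leq T$ independent Bernoulli$(p)$ random variables, and then apply a standard multiplicative Chernoff bound. Write $X = |A^\prime \cap I|$ and $\mu = \mathbb{E}[X] = p|A \cap I| \leq pT$. The goal is to bound $\operatorname{Pr}(X \geq a)$ where $a = 4(\log N)^{10} pT$.

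The key hypothesis $p \geq T^{-1}$ ensures $pT \geq 1$, which is precisely what makes multiplicative Chernoff bounds effective here. Since $a/\mu \geq a/(pT) = 4(\log N)^{10} \geq e$ for $N \geq 3$, I would apply the standard estimate
\begin{equation*}
\operatorname{Pr}(X \geq a) \leq \Big(\frac{e\mu}{a}\Big)^{a} \leq \Big(\frac{e}{4(\log N)^{10}}\Big)^{4(\log N)^{10} pT}.
\end{equation*}
Using $pT \geq 1$ in the exponent, this is at most $\big(e/(4(\log N)^{10})\big)^{4(\log N)^{10}}$, which decays faster than any polynomial in $N$; in particular it is $\lesssim N^{-10}$ for $N$ sufficiently large (and trivially holds for small $N$ after adjusting the implicit constant). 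Alternatively one could cite Bernstein's inequality or simply use the cruder bound $\operatorname{Pr}(X \geq a) \leq 2^{-a}$, valid for $a \geq 2e\mu$, which already gives $2^{-4(\log N)^{10}} \leq N^{-10}$ once $(\log N)^{9} \geq 10/(4\log 2)$.

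There is essentially no obstacle in this proof; it is a direct concentration-of-measure argument and the main point to verify is simply that the lower bound $p \geq T^{-1}$ places us in the regime where the Chernoff tail is genuinely small. I note also that the hypotheses $|A| > N^{\eps}$ and ``$C \geq 1$'' appear not to enter the calculation at all, and presumably serve only to record the setting in which the lemma is invoked elsewhere in the paper.
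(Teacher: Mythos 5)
Your proof is correct and follows essentially the same route as the paper: both invoke a Chernoff-type tail bound for a sum of at most $T$ independent Bernoulli$(p)$ variables and then use $pT\geq 1$ (from the hypothesis $p\geq T^{-1}$) to ensure that the deviation $4(\log N)^{10}pT$ is large enough for the tail to decay superpolynomially in $N$. The only cosmetic difference is that you use the $(e\mu/a)^a$ form of Chernoff's inequality while the paper uses the multiplicative $(1+t)$ form with $t\geq 1$; your observation that the hypotheses $|A|>N^{\eps}$ and $C\geq 1$ are never used is also accurate.
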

\begin{proof}
First, observe that the expected value of $|A^\prime\cap I|$ is $p|A\cap I|$. By the multiplicative form of Chernoff's bound, we have that for each $t>0$,
\begin{equation}\label{cherfnoffMult}
\operatorname{Pr}\Big(|A^\prime\cap I|\geq (1+t)p|A\cap I|\Big)\leq \exp\big( -t p|A\cap I|/3 \big).
\end{equation}
Applying \eqref{cherfnoffMult} with
\begin{equation*}
t = \frac{3 (\log N)^{10} pT }{p|A\cap I|}=\frac{3(\log N)^{10} T}{|A\cap I|},
\end{equation*}
and noting that since $t\geq 1$,
\begin{equation*}
(t+1)p|A\cap I| \leq 4 (\log N)^{10} pT ,
\end{equation*}
we obtain
\begin{align*}
\operatorname{Pr}\Big(|A^\prime\cap I|\geq 4 (\log N)^{10} pT \Big) & \leq \exp\Big( -\frac{3 (\log N)^{10} T}{|A\cap I|}\big(p|A\cap I|/3\big) \Big)\\
&\leq \exp\Big( - (\log N)^{10}  \big[pT\big] \Big)\\
&\leq \exp\big(- (\log N)^{10} \big)\\
&=N^{-10}.\qedhere
\end{align*}
\end{proof}

\begin{cor}\label{twoEndsRandomRefinement}
Let $\eps>0$. Let $A\subset[N]=\{1,\ldots,N\}$ with $|A|>N^{\eps}$. Suppose that for each interval $I\subset N$ we have
\begin{equation}
|A\cap I|\leq (|I|/N)^{\eps}M.
\end{equation}

Let $M^{-1}\leq p\leq 1$. Let $A^\prime\subset A$ be obtained by randomly selecting each element of $A$ independently with probability $p$. Then
\begin{equation*}
\operatorname{Pr}\Big(|A^\prime\cap I|\geq 4 (\log N)^{10} p (|I|/N)^{\eps} M \quad\ \textrm{for some interval}\ I\subset[N]\Big)\lesssim N^{-8}.
\end{equation*}
\end{cor}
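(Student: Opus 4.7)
My plan is to apply Lemma \ref{twoEndsIntervalSample} to each interval $I \subset [N]$ separately and then take a union bound over all of them. Fix an interval $I$ and set $T_I := (|I|/N)^{\eps}M$; the corollary's hypothesis gives $|A \cap I| \le T_I$. For intervals satisfying $pT_I \ge 1$ (equivalently $|I| \ge N(pM)^{-1/\eps}$), Lemma \ref{twoEndsIntervalSample} applies directly with $T = T_I$ and yields
\[
\operatorname{Pr}\bigl(|A' \cap I| \ge 4(\log N)^{10}\, p\, T_I\bigr) \lesssim N^{-10}.
\]

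For the remaining intervals, where $pT_I < 1$, Lemma \ref{twoEndsIntervalSample}'s hypothesis $p \ge T^{-1}$ fails with the natural choice $T = T_I$, so I instead apply the lemma with the enlarged parameter $T' := 1/p$. Since $|A \cap I| \le T_I \le T'$ and $pT' = 1$, the lemma applies and yields $|A' \cap I| \le 4(\log N)^{10}$ with probability at least $1 - N^{-10}$. In this regime the target threshold $4(\log N)^{10}\, p\, T_I$ is at most $4(\log N)^{10}$; since $|A' \cap I|$ is integer-valued the violation event is trivial unless the threshold is already at least $1$, at which point the two bounds agree up to an absolute constant.

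Finally, since $[N]$ contains at most $\binom{N+1}{2} = O(N^2)$ distinct intervals, summing the per-interval failure probabilities produces a total failure probability of $O(N^2 \cdot N^{-10}) = O(N^{-8})$, as claimed. The only (minor) obstacle is the short-interval case $pT_I < 1$, handled by the substitution $T \leftarrow 1/p$ above; the rest is a routine Chernoff-plus-union-bound argument and requires no further geometric input.
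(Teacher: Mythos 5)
Your main approach matches the paper exactly: apply Lemma~\ref{twoEndsIntervalSample} interval-by-interval with $T = T_I := (|I|/N)^{\eps}M$, and union-bound over the $\lesssim N^2$ intervals. The paper's proof is essentially these two sentences and no more.

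You deserve credit for noticing something the paper glosses over: the hypothesis $T^{-1}\leq p$ of Lemma~\ref{twoEndsIntervalSample} (equivalently $pT\geq 1$) is used crucially in the step $\exp\big(-(\log N)^{10}pT\big)\leq\exp\big(-(\log N)^{10}\big)$, and for short intervals one only has $p\geq M^{-1}$, not $p\geq T_I^{-1}$. This is a genuine gap in the paper's own argument, not just an oversight of yours.

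However, your patch does not close the gap. Applying the lemma with $T'=1/p$ yields a tail bound on the event $\{|A'\cap I|\geq 4(\log N)^{10}pT'\}=\{|A'\cap I|\geq 4(\log N)^{10}\}$. But when $pT_I<1$ the target threshold $4(\log N)^{10}pT_I$ is \emph{strictly smaller} than $4(\log N)^{10}$, so the target event $\{|A'\cap I|\geq 4(\log N)^{10}pT_I\}$ is a \emph{superset} of the one you bounded; the inequality goes the wrong way. Your concluding sentence has two further problems. First, ``the violation event is trivial unless the threshold is at least $1$'' is incorrect: when $4(\log N)^{10}pT_I<1$, the violation event is $\{|A'\cap I|\geq 1\}=\{A'\cap I\neq\emptyset\}$, which has probability $1-(1-p)^{|A\cap I|}$, and this is not small in general. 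Second, when $1\leq 4(\log N)^{10}pT_I<4(\log N)^{10}$, the two thresholds $4(\log N)^{10}pT_I$ and $4(\log N)^{10}$ can differ by a factor as large as $(\log N)^{10}$, which is not an absolute constant, so ``the two bounds agree up to an absolute constant'' is also false.

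In fact the corollary is false exactly as written, so no patch of this form can work: take $A=[N]$, $M=N$, $p=1/N$, and any $\eps\in(0,1)$. All hypotheses hold (note $(|I|/N)^{\eps}N\geq|I|$ since $\eps\leq 1$). With probability $1-(1-1/N)^N\to 1-e^{-1}$ the set $A'$ is nonempty; choosing $I=\{i\}$ for $i\in A'$ gives $|A'\cap I|=1$ while the threshold $4(\log N)^{10}N^{-\eps}\to 0$, so the failure event occurs with probability bounded away from zero, not $\lesssim N^{-8}$. The corollary's hypothesis $p\geq M^{-1}$ is too weak; the argument actually needs something on the order of $p\gtrsim N^{\eps}/M$ (or, alternatively, the conclusion should have a threshold of the form $(\log N)^{10}\big(1+p(|I|/N)^{\eps}M\big)$) to guarantee $pT_I\gtrsim 1$ for every interval. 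Your instinct that short intervals are the problem is correct; the resolution has to be a change to the statement, not the substitution $T\mapsto 1/p$.
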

\begin{proof}
Observe that there are $\leq N^2$ intervals $I\subset [N]$. We apply Lemma \ref{twoEndsIntervalSample} to each of these intervals with $T = (|I|/N)^{\eps}M$, and use the union bound.
\end{proof}

To conclude this section, we will show that in order to prove Proposition \ref{volumeBdPlainyTubes}, it suffices to consider the special case where $B=1$. More concretely, it suffices to prove the following result

\begin{prop}\label{volumeBdPlainyTubesB1}
Let $0<\theta<1$ and let $0<\eps_1<1$. Let $\Omega\subset S^3$ be a set of $\theta$-separated directions. Let $(\tubes,Y)$ be a set of essentially distinct plany $\theta$-tubes and their associated shading. Suppose that 
\begin{itemize}
\item There are $\sim|\tubes|/|\Omega|$ tubes from $\tubes$ pointing in each direction $v\in\Omega$. 
\item $(\tubes,Y)$ is $(\eps_1,C_1)$-two-ends.
\item For each $Q\in\mathcal{Q}(Y)$, there exist a $2$-plane $\Pi(Q)$ so that for each $T\in\tubes_Y(Q)$ we have $\angle(v(T),\Pi(Q))\leq\theta$.
\end{itemize}
Then for each $\eps>0$, we have

\begin{equation}\label{volumeBoundPlainyTubes}
\Big|\bigcup_{T\in\tubes}Y(T)\Big|\geq c C_1^{-C/\eps_1}\theta^{\eps}\lambda_Y^{4/3} \theta^{2/3} \big(\theta^3|\Omega|\big)^{1/3}\big(\theta^3|\tubes|\big)^{2/3}.
\end{equation}
Here $C$ is an absolute constant and $c>0$ depends on $\eps$ and $\eps_1$.
\end{prop}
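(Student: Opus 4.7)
The plan is to deduce Proposition \ref{volumeBdPlainyTubesB1} from Proposition \ref{volumeBdPlainyTubesSpecialCase} by imposing the missing robust transversality hypothesis through a direction-pigeonholing and rescaling argument modeled on the proof of Proposition \ref{robustTransReduction}, while verifying that the planyness hypothesis survives the anisotropic rescaling. Choose a small auxiliary parameter $\eps_2>0$ of order $\eps\eps_1$. After refining so that $|\tubes_Y(Q)|\sim\mu_Y$ for every $Q\in\mathcal{Q}(Y)$, apply Lemma \ref{twoEndsBuildingBlock} to the finite metric spaces $X_Q=\{v(T)\colon T\in\tubes_Y(Q)\}\subset S^3$ (with the angular metric), obtaining a common radius $0<r\le 1$, a subfamily $\mathcal{Q}^\prime\subset\mathcal{Q}(Y)$, and vectors $v_Q\in X_Q$. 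Setting
\[
Y_2(T)=\bigcup\{Q\in\mathcal{Q}^\prime\colon Q\subset Y(T),\ \angle(v(T),v_Q)\le 2r\}
\]
produces a $\gtrapprox\theta^{\eps_2}$-refinement $(\tubes,Y_2)$ that is $(\eps_2,100)$-robustly transverse while remaining $(\eps_1,\theta^{-O(\eps_2)}C_1)$-two-ends and plany. Next, cover the collection by essentially distinct $10r$-tubes $S\in\mathcal{S}^\prime$ so that the sets $\tubes_S=\{T\in\tubes\colon Y_2(T)\subset S\}$ are disjoint with $\sum_S|\tubes_S|\gtrapprox|\tubes|$ and uniform shading density, and so that within each $\tubes_S$ the pointing rate $m_S=|\tubes_S|/|\Omega_S|$ is uniform (here $\Omega_S=\{v(T)\colon T\in\tubes_S\}$); note that always $m_S\le m:=|\tubes|/|\Omega|$.

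Next, for each $S\in\mathcal{S}^\prime$ apply an anisotropic linear map $F_S$ sending $S$ to the unit ball. Because $F_S$ stretches the three short directions of $S$ by $\sim 1/r$ and the long direction by $\sim 1$, the image $\tilde\tubes_S=F_S(\tubes_S)$ is a collection of $\tilde\theta$-tubes at scale $\tilde\theta=\theta/r$ pointing in $\tilde\theta$-separated directions $\tilde\Omega_S$. Since $F_S$ preserves incidence with planes and scales small angles in the short directions of $S$ by exactly $1/r$, the plany hypothesis $\angle(v(T),\Pi(Q))\le\theta$ transports cleanly to $\angle(v(\tilde T),\tilde\Pi(\tilde Q))\lesssim\tilde\theta$; the rescaled pair $(\tilde\tubes_S,\tilde Y_S)$ inherits an $(\eps_1,\theta^{-O(\eps_2)}C_1)$-two-ends constant and is $(\eps_2,100)$-robustly transverse by construction, so it satisfies every hypothesis of Proposition \ref{volumeBdPlainyTubesSpecialCase}.

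Apply Proposition \ref{volumeBdPlainyTubesSpecialCase} (with $\eps/2$ in place of $\eps$) and undo the rescaling; since the Jacobian of $F_S$ is $r^{-3}$ we have $|V\cap S|=r^3|\tilde V_S|$, which after simplification gives
\[
|V\cap S|\gtrapprox C_1^{-1/\eps_1}\theta^{\eps/2-O(\eps_2/\eps_1)}\lambda_Y^{4/3}\theta^{11/3}\,r^{-2/3}|\tilde\Omega_S|^{1/3}|\tilde\tubes_S|^{2/3},
\]
where the gain $r^{-2/3}\ge 1$ arises from the mismatch between the $r^3$ volume factor and the $\tilde\theta^{11/3}=\theta^{11/3}r^{-11/3}$ produced by the rescaled right-hand side. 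Summing over $S$ and using $m_S\le m$,
\[
\sum_S |\tilde\Omega_S|^{1/3}|\tilde\tubes_S|^{2/3}=\sum_S|\tilde\tubes_S|m_S^{-1/3}\ge m^{-1/3}\sum_S|\tilde\tubes_S|\gtrapprox|\Omega|^{1/3}|\tubes|^{2/3},
\]
so that choosing $\eps_2\ll\eps\eps_1$ absorbs the $\theta^{-O(\eps_2/\eps_1)}$ and logarithmic losses into $\theta^\eps$, yielding \eqref{volumeBoundPlainyTubes} with some absolute exponent $C$ on $C_1^{-C/\eps_1}$.

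The main obstacle is verifying that the plany hypothesis together with the auxiliary angle conditions (tube-to-plane incidence, direction separation, and uniform pointing multiplicity) all rescale with the correct powers of $1/r$ under the anisotropic map $F_S$. The key geometric fact is that every small angle in the pre-rescaled picture involves vectors whose short-direction components are $O(r)$, and $F_S$ magnifies those components by exactly $1/r$, which is precisely the scaling that converts $\theta$-scale estimates into $\tilde\theta$-scale estimates; this ensures that the rescaled data genuinely falls within the hypotheses of Proposition \ref{volumeBdPlainyTubesSpecialCase} rather than merely approximating them.
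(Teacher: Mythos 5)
Your proposal is correct and carries out exactly the argument the paper alludes to: a robust transversality reduction modeled on Proposition~\ref{robustTransReduction}, applied to reduce Proposition~\ref{volumeBdPlainyTubesB1} to Proposition~\ref{volumeBdPlainyTubesSpecialCase}, where the only new point to verify is that the planyness hypothesis transports correctly under the anisotropic rescaling $F_S$ (which it does, since all relevant directions lie within angle $\sim r$ of the long axis of $S$ and $F_S$ magnifies those transverse components by exactly $1/r$). The one small blemish is a sign slip in the bookkeeping: the factor arising from the weakened two-ends constant $\tilde C_1 \approx \theta^{-O(\eps_2)}C_1$ after refinement is $\tilde C_1^{-1/\eps_1} \approx \theta^{+O(\eps_2/\eps_1)}C_1^{-1/\eps_1}$, a genuine loss, so the intermediate exponent should read $\theta^{\eps/2+O(\eps_2/\eps_1)}$ rather than $\theta^{\eps/2-O(\eps_2/\eps_1)}$; the conclusion you draw (choose $\eps_2 \ll \eps\eps_1$ so the loss is absorbed into $\theta^{\eps}$) is nevertheless correct.
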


\begin{proof}[Proof of Proposition \ref{volumeBdPlainyTubes} using Proposition \ref{volumeBdPlainyTubesB1}]
Let $(\tubes,Y)$ be a set of $\theta$-tubes and their associated shadings that satisfy the hypotheses of Proposition \ref{volumeBdPlainyTubes}. Let $\eps>0$.

For each $Q\in\mathcal{Q}$, randomly select an index $i_Q\in \{1,\ldots,B_1\}$ uniformly at random. For the remainder of this proof, ``probability'' will be with respect to the random selection of indices $i_Q$ as $Q$ ranges over the elements of $\mathcal{Q}$ (this random selection is equivalent to selecting an element of $\{1,\ldots,B_1\}^{|\mathcal{Q}|}$ uniformly at random). 

Define $\tilde Y(T)\subset Y(T)$ to be the union of all cubes $Q\in \mathcal{Q}^\prime, Q\subset Y(T)$ for which $T\in \tubes_{i}(Q)$ (note that $\tilde Y(T)$ is a random set). Since for each cube $Q\in\mathcal{Q}$, each tube $T\in\tubes_Y(Q)$ is contained in $B_2$ of the sets $\tubes_{i}(Q)$, the random set $\tilde Y (T)$ has the same distribution as the random set obtained by selecting each cube $Q\in \mathcal{Q}, Q\subset Y(T)$ independently with probability $p = B_2/B_1\geq B^{-1}$.

By Corollary \ref{twoEndsRandomRefinement} we have that each $T\in\tubes$ satisfies
\begin{equation}\label{twoEndsYPrime}
\operatorname{Pr}\Big( |\tilde Y(T)\cap B(x,r)|\leq 4r^{\eps_1}((\log\theta)^{10}p ) C_1\lambda_Y |T|\quad\textrm{for all balls}\ B(x,r)\Big)\geq 1-\theta^8.
\end{equation}
An application of Chernoff's bound shows that 
\begin{equation}\label{sizeBoundYPrime}
\operatorname{Pr}\Big( |\log \theta|^{-8}p  |Y(T)| \leq |\tilde Y(T)|\leq |\log \theta|^{8}p  |Y(T)|\Big)\geq 1-\theta^8.
\end{equation}

Since the tubes in $\tubes$ are essentially distinct, we have $|\tubes|\leq\theta^{-6}$, and thus the probability that every tube in $\tubes$ satisfies the events in \eqref{twoEndsYPrime} and \eqref{sizeBoundYPrime} is at least $1-\theta^2$. In particular, there exists a choice of indices $\{i_Q\colon Q\in\mathcal{Q}\}$ so that the events in \eqref{twoEndsYPrime} and \eqref{sizeBoundYPrime} hold for every tube $T\in\tubes$. Fix one such choice of indices, and define $Y^\prime(T)=\tilde Y(T)$.

By \eqref{sizeBoundYPrime}, we have
\begin{equation}\label{boundOnLambdaPrime}
(\log\theta)^{-8}\lambda_Y p \leq \lambda_{Y^\prime} \leq (\log \theta)^8\lambda_Y p.
\end{equation}

By \eqref{boundOnLambdaPrime} and \eqref{twoEndsYPrime}, we have that the pair $(\tubes,Y^\prime)$ is $(\eps_1, 4C_1 (\log\theta)^{16})$-two-ends. The pair $(\tubes^\prime,Y^\prime)$ satisfies the hypotheses of Proposition \ref{volumeBdPlainyTubesB1}. Applying Proposition \ref{volumeBdPlainyTubesB1} with $\eps/2$ in place of $\eps$, we conclude that there is a constant $c^\prime>0$ so that
\begin{align*}
\Big|\bigcup_{T\in\tubes} Y(T)\Big| & \geq \Big| \bigcup_{T\in\tubes}Y^\prime(T)\Big|\\
&\geq c^\prime \big(4C_1 |\log\theta|^{16}\big)^{-C/\eps_1}\theta^{\eps/2}\lambda_{Y^\prime}^{4/3} \theta^{2/3} (\theta^3|\Omega|)^{1/3}(\theta^3|\tubes^\prime|)^{2/3}\\
&\gtrapprox c^\prime  C_1^{-C/\eps_1} \theta^{\eps/2} \lambda_{Y^\prime}^{4/3} p^{-4/3}\theta^{2/3} (\theta^3|\Omega|)^{1/3}(\theta^3|\tubes^\prime|)^{2/3}\\
&\geq c^\prime  C_1^{-C/\eps_1} \theta^{\eps/2} \lambda_{Y^\prime}^{4/3} B^{-4/3}\theta^{2/3} (\theta^3|\Omega|)^{1/3}(\theta^3|\tubes^\prime|)^{2/3}.
\end{align*}
Thus if $c>0$ is chosen sufficiently small, then \eqref{volumeBoundPlainyTubes} holds.
\end{proof}

\subsection{Volume bounds for unions of strongly plany tubes}
Recall that at this point, we have proved Proposition \ref{volumeBdPlainyTubesSpecialCase}, and we have also proved that Proposition \ref{volumeBdPlainyTubesB1} implies Proposition \ref{volumeBdPlainyTubes}. All that remains is to show that Proposition \ref{volumeBdPlainyTubesSpecialCase} implies Proposition \ref{volumeBdPlainyTubesB1}. However, Proposition \ref{volumeBdPlainyTubesB1} is essentially identical to Proposition \ref{volumeBdPlainyTubesSpecialCase} except that the requirement that  $(\tubes,Y)$ be $(\eps_2,C_2)$-robustly transverse has been removed. This is accomplished through a ``robust transversality reduction'' argument nearly identical to that in Proposition \ref{robustTransReduction}. Since the details are nearly identical, we omit them here.

\section{A maximal function estimate in $\RR^4$}\label{maxmlFnEstimateSection}
In this section we will prove Theorem \ref{maximalFunctionEstimateBound}. We will begin with a lemma that lets us upgrade certain assertions of the form $\mathbf{TE}(d,a,b)$ to stronger assertions $\mathbf{TE}(d^\prime,a^\prime,b^\prime)$. Theorem \ref{maximalFunctionEstimateBound} will eventually be proved by iterating this lemma.
 
\begin{lem}\label{boundWithCorrectTubesDependence}
Suppose that Assertion $\mathbf{TE}(4-\alpha,75/28, 1-\alpha/3)$ is true for some $3/4\leq\alpha\leq 1$. Define
\begin{equation}\label{defnOfAlphapPp}
\alpha^\prime = \frac{-118\alpha^2+121\alpha+189}{378-182\alpha},\quad \alpha^{\prime\prime}=\frac{159}{100}-\frac{63}{100\alpha}.
\end{equation}
Then Assertion $\mathbf{TE}(4-\alpha^{\prime\prime},13/4, 1)$ is true. If $\alpha'\leq\alpha,$ then Assertion $\mathbf{TE}(4-\alpha^\prime,75/28, 1-\alpha^\prime/3)$ is true.
\end{lem}
\begin{rem}
Both the hypotheses and conclusions Lemma \ref{boundWithCorrectTubesDependence} contain some slightly strange numerology, which we will remark upon here. First, the requirement that $\alpha\leq 1$ is harmless, since Theorem \ref{wolffThm} states that $RT(3,2,1/2)$ is true. The requirement that $\alpha\geq 3/4$ comes from Theorem \ref{GZTrilinearProp}, which establishes an analogue of the estimate $RT(13/4, 13/4, 1/4)$ under the additional hypothesis that the tubes in question behave in a ``trilinear'' fashion. If this hypothesis is met, then we cannot improve on the bound from Theorem \ref{GZTrilinearProp}.

The number $a=75/28$ describing the $\lambda$ dependence in the hypotheses and conclusions of Lemma \ref{boundWithCorrectTubesDependence} is not particularly important. This is because our argument seeks to find the optimal value of $\alpha^\prime$ and $\alpha^{\prime\prime}$. The corresponding value of $a$ must satisfy several slightly complicated constraints. Rather than tracking these constraints throughout the argument, we have simplified the exposition slightly by choosing a particular value of $a$ satisfying these constraints (the particular value of $a$ chosen here was obtained by beginning with Inequality \eqref{ineqInvolvingPowerLambda} and working backwards). Finally, the values of $\alpha^\prime$ and $\alpha^{\prime\prime}$ from \eqref{defnOfAlphapPp} arise as the natural output of the arguments presented below.
\end{rem}
\begin{proof}
To obtain the above assertions, it suffices (by Proposition \ref{robustTransReduction}) to prove that Assertion $\mathbf{RT}(4-\alpha^{\prime\prime},4-\alpha^{\prime\prime},1)$ and Assertion $\mathbf{RT}(4-\alpha^\prime,75/28, 1-\alpha/3)$ is true. For the latter, note that $\frac{(4-\alpha^\prime)-1}{3}= 1-\alpha'/3\leq 1$, so the hypotheses of Proposition \ref{robustTransReduction} are met. To this end, let $(\tubes,Y)$ be a set of $\delta$-tubes and their associated shading that is $(\eps_1,C_1)$-two-ends and $(\eps_2,100)$-robustly transverse, and let $\eps>0$. 

Apply Lemma \ref{planyOrTrilinearLem} to $(\tubes,Y)$ and let $\theta$, $\Omega$, $(\tubes^\prime,Y^\prime)$, $(\tubes_\theta,Y_\theta)$, and $\mu_{\operatorname{fine}}$ be the output from that Lemma. By Item \ref{tubesHaveUniformMult} from Lemma \ref{planyOrTrilinearLem}, we have that each cube $Q\in\mathcal{Q}(Y^\prime)$ satisfies $|\tubes_{Y^\prime}(Q)|\lesssim \mu_{Y^\prime}$. Thus
\begin{equation}\label{volumeBoundVsMultBound}
\begin{split}
\Big|\bigcup_{T\in\tubes}Y(T)| & \geq \Big|\bigcup_{T\in\tubes^\prime}Y^\prime(T)|\Big|\\
&\gtrsim \mu_{Y^\prime}^{-1}\lambda_{Y^\prime}(\delta^3|\tubes^\prime|)\\
&\gtrapprox \mu_{Y^\prime}^{-1}\lambda_{Y}(\delta^3|\tubes|),
\end{split}
\end{equation}
where on the final line we used the fact that $(\tubes^\prime,Y^\prime)$ is a refinement of $(\tubes,Y)$; this implies that $\lambda_{Y^\prime}(\delta^3|\tubes^\prime|)\gtrapprox \lambda_{Y}(\delta^3|\tubes|)$, and that 
\begin{equation}\label{lambdaYvsYPrime}
\lambda_Y\lessapprox \lambda_{Y^\prime}.
\end{equation} 
Thus it suffices to prove that
\begin{equation}\label{pointwiseMultBoundWrongTubeDependence}
\mu_{Y^\prime}\lessapprox \delta^{-\eps/2} C_1^{C/\eps_1}\min\Big(\lambda_{Y^\prime}^{-\frac{47}{28}}\delta^{-\alpha^\prime}(\delta^3|\tubes|)^{\alpha/3} ,\ \ \lambda_{Y^\prime}^{-9/4}\delta^{-\alpha^{\prime\prime}}\Big),
\end{equation}
where the constant $C$ may depend on $\alpha$, and the implicit constant may depend on $\eps$, $\eps_1,\eps_2$, and $\alpha$. Indeed, combining \eqref{volumeBoundVsMultBound}, \eqref{lambdaYvsYPrime}, and \eqref{pointwiseMultBoundWrongTubeDependence}, we obtain the volume bound
\begin{equation}
\Big|\bigcup_{T\in\tubes}Y(T)\Big|\gtrapprox \delta^{\eps/2}C_1^{-C/\eps_1}\max\Big(\lambda_Y^{75/28}\delta^{\alpha^\prime}(\delta^3|\tubes^\prime|)^{1-\alpha/3},\ \lambda_Y^{4-\alpha^{\prime\prime}}\delta^{\alpha^{\prime\prime}}(\delta^3|\tubes^\prime|)\Big).
\end{equation}
This volume bound is precisely what is needed to establish Assertion $\mathbf{RT}(4-\alpha^{\prime\prime},4-\alpha^{\prime\prime},1)$, and it also establishes Assertion $\mathbf{RT}(4-\alpha^\prime,75/28, 1-\alpha^\prime/3)$ provided $\alpha'\leq\alpha$. 

The remainder of the proof will be devoted to establishing \eqref{pointwiseMultBoundWrongTubeDependence}. Define $A=|\tubes_\theta|/|\Omega|$ and define $B = \mu_{Y^\prime}/\mu_{\operatorname{fine}}$. From Proposition \ref{volumeBdPlainyTubes}, we have the bound
\begin{equation*}
\mu_{Y_\theta}\lessapprox C_1^{-C/\eps_1} \lambda_{Y_\theta}^{-1/3}\theta^{-2/3-\eps/2}A^{1/3}B^{4/3}.
\end{equation*}
From Corollary \ref{KTCor} we have the bound
\begin{equation*}
\begin{split}
\mu_{Y_\theta}&\lessapprox \lambda_{Y_\theta}^{-2}\theta^{-1-\eps/2}A^{3/4}(\theta^3 |\Omega|)^{1/3}\\
& = \lambda_{Y_\theta}^{-2}\theta^{-1-\eps/2}A^{3/4}(\theta^3 A^{-1}|\tubes_\theta|)^{1/3}
\end{split}
\end{equation*}
Combining these, we obtain the bound
\begin{equation}
\begin{split}
\mu_{Y_\theta}&\lessapprox \Big(C_1^{-C/\eps_1} \lambda_{Y_\theta}^{-1/3}\theta^{-2/3-\eps/2}A^{1/3}B^{4/3}\Big)^{3/4}
\Big( \lambda_{Y_\theta}^{-2}\theta^{-1-\eps/2}A^{3/4}(\theta^3 A^{-1}|\tubes_\theta|)^{1/3}\Big)^{1/4}\\
& \leq C_1^{-C/\eps_1} \lambda_{Y_\theta}^{-3/4}\theta^{-3/4-\eps/2}A^{7/16}B (\theta^3 A^{-1}|\tubes_\theta|)^{1/12}.
\end{split}
\end{equation}

From Items \ref{lowerBoundLambdaYTheta} and \ref{productMultiplicity} of Lemma \ref{planyOrTrilinearLem}, we have that 
\begin{equation}\label{pointwiseBdCorrectB}
\begin{split}
\mu_{Y^\prime}& \lessapprox \mu_{Y_\theta}\mu_{\operatorname{fine}}/B\\
&\lessapprox C_1^{-C/\eps_1} \lambda_{Y_\theta}^{-3/4}\theta^{-3/4-\eps/2}A^{7/16} (\theta^3 A^{-1}|\tubes_\theta|)^{1/12}\mu_{\operatorname{fine}}\\
&\lessapprox C_1^{-C/\eps_1} \lambda_{Y^\prime}^{-3/4}\theta^{-3/4-\eps/2}A^{7/16} (\theta^3 A^{-1}|\tubes_\theta|)^{1/12}\mu_{\operatorname{fine}}.
\end{split}
\end{equation}
Since the tubes whose shading contains a $\delta$-cube $Q\in\mathcal{Q}(Y^\prime)$ must lie in the $\theta$ neighborhood of a plane, we also have the bound

\begin{equation}\label{pointwiseBoundPlainyTrivial}
\mu_{Y^\prime}\leq \theta^{-1}\mu_{\operatorname{fine}}. 
\end{equation}

Combining \eqref{pointwiseBdCorrectB} and \eqref{pointwiseBoundPlainyTrivial}, we obtain the bound
\begin{equation}\label{pointwiseThetaBound}
\begin{split}
\mu_{Y^\prime}&\lessapprox  \Big(C_1^{-C/\eps_1} \lambda_{Y^\prime}^{-3/4}\theta^{-3/4-\eps/2}A^{7/16} (\theta^3 A^{-1}|\tubes_\theta|)^{1/12}\mu_{\operatorname{fine}}\Big)^{\frac{16\alpha}{21}}
\Big(\theta^{-1}\mu_{\operatorname{fine}}\Big)^{1-\frac{16\alpha}{21}}\\
&\lessapprox C_1^{-C/\eps_1}\lambda_{Y^\prime}^{\frac{-4\alpha}{7}}\theta^{\frac{4\alpha-21}{21}-\eps/2} A^{\frac{17\alpha}{63}} (\theta^3 |\tubes_\theta|)^{\frac{4\alpha}{63}}\mu_{\operatorname{fine}}.
\end{split}
\end{equation}

Since Assertion $\mathbf{TE}_\delta(4-\alpha,75/28, 1-\alpha/3)$ is true, we can apply it to the tubes contained in each of the $\theta$-tubes $T_\theta\in\tubes_\theta$. We conclude that
\begin{equation}\label{muFineEstimateInduction}
\begin{split}
\mu_{\operatorname{fine}}&\lesssim  C_1^{-C/\eps_1}\lambda_{Y^\prime}^{-\frac{47}{28}}(\delta/\theta)^{-\alpha-\eps/2}\Big((\delta/\theta)^3|\tubes|/|\tubes_\theta|\Big)^{\alpha/3}\\
&\lesssim C_1^{-C/\eps_1} \lambda_{Y^\prime}^{-\frac{47}{28}}(\delta/\theta)^{-\alpha-\eps/2}\Big((\delta/\theta)^3|\tubes|/|\tubes_\theta|\Big)^{4\alpha/63}A^{-17\alpha/63},
\end{split}
\end{equation}
where the implicit constant depends on $\eps,\eps_1,\eps_2,$ and $\alpha$. On the second line we used the fact that there are $\sim A$ tubes in $T_\theta$ pointing in each $\theta$-separated direction, and each of these $\theta$-tubes contain $\lesssim A^{-1}(\theta/\delta)^3$ tubes from $\tubes^\prime$. 

Combining \eqref{pointwiseThetaBound} and \eqref{muFineEstimateInduction}, we have 
\begin{equation}\label{boundInvolvingTheta}
\begin{split}
\mu_{Y^\prime}
&\lessapprox \Big(C_1^{-C/\eps_1}\lambda_{Y^\prime}^{\frac{-4\alpha}{7}}\theta^{\frac{4\alpha-21}{21}-\eps/2} A^{\frac{17\alpha}{63}} (\theta^3 |\tubes_\theta|)^{\frac{4\alpha}{63}}\Big) \\
&\quad\cdot\Big(C_1^{-C/\eps_1} \lambda_{Y^\prime}^{-\frac{47}{28}}(\delta/\theta)^{-\alpha-\eps/2}\Big((\delta/\theta)^3|\tubes|/|\tubes_\theta|\Big)^{4\alpha/63}A^{-17\alpha/63}  \Big)\\
&\leq C_1^{-C/\eps_1} \lambda_{Y^\prime}^{-\frac{4\alpha}{7} - \frac{47}{28}}\theta^{\frac{25\alpha-21}{21}}\delta^{-\alpha-\eps/2}(\delta^3|\tubes|)^{4\alpha/63}.
\end{split}
\end{equation}
By Corollary \ref{trilinearVolumeBound}, we also have the estimate
\begin{equation}
\mu_{Y^\prime} \lessapprox  \lambda_{Y^\prime}^{-9/4}\theta^{-1}\delta^{-3/4-\eps/2}(\delta^3|\tubes|)^{3/4}.
\end{equation}
Interpolating these two estimates, we have
\begin{equation}\label{multiplicityBoundWrongPowerOfTubes}
\begin{split}
\mu_{Y^\prime}&\lessapprox \Big(C_1^{-C/\eps_1} \lambda_{Y^\prime}^{-\frac{4\alpha}{7} - \frac{47}{28}}\theta^{\frac{25\alpha-21}{21}}\delta^{-\alpha-\eps/2}(\delta^3|\tubes|)^{4\alpha/63}\Big)^{\frac{21}{25\alpha}} \\
&\quad\cdot\Big(\lambda_{Y^\prime}^{-9/4}\theta^{-1}\delta^{-3/4-\eps/2}(\delta^3|T|)^{3/4}\Big)^{1-\frac{21}{25\alpha}}\\
&\lessapprox \lambda_{Y^\prime}^{\frac{12}{25\alpha}-\frac{273}{100}} \delta^{\frac{63}{100\alpha}-\frac{159}{100}-\eps/2}  (\delta^3|\tubes|)^{\frac{241}{300}-\frac{63}{100\alpha}}.
\end{split}
\end{equation}
Observe that for $3/4\leq\alpha\leq 1$ we have
\begin{equation}\label{ineqInvolvingPowerLambda}
\frac{12}{25\alpha}-\frac{273}{100}\geq -\frac{9}{4}.
\end{equation}
Thus 
\begin{equation}\label{fistMuYpEstimate}
\mu_{Y^\prime}\lessapprox C_1^{C/\eps_1}\delta^{-\eps/2}\lambda_{Y^\prime}^{-9/4}\delta^{-\alpha^{\prime\prime}}.
\end{equation}
This inequality is the second term in \eqref{pointwiseMultBoundWrongTubeDependence}, so we have proved half of the inequality. Our next task is to establish the remaining half of \eqref{pointwiseMultBoundWrongTubeDependence}.

From Theorem \ref{wolffThm}, we have the estimate
\begin{equation}
\mu_{Y^\prime} \lessapprox \lambda_{Y^\prime}^{-1} \delta^{-1} (\delta^3|\tubes|)^{1/2}.
\end{equation}
Interpolating these two estimates, we have

\begin{equation}
\begin{split}
\mu_{Y^\prime}&\lessapprox  \Big( \lambda_{Y^\prime}^{\frac{12}{25\alpha}-\frac{273}{100} } \delta^{\frac{63}{100\alpha}-\frac{159}{100}-\eps/2}  (\delta^3|\tubes|)^{\frac{241}{300}-\frac{63}{100\alpha}} \Big)^{ \frac{50\alpha(3-2\alpha)}{189-91\alpha}}\\
&\quad\cdot
\Big(\lambda_{Y^\prime}^{-1}\delta^{-1}(\delta^3|\tubes|)^{\frac{1}{2}} \Big)^{1-\frac{50\alpha(3-2\alpha)}{189-91\alpha}}\\
&\leq C_1^{-C/\eps_1}\lambda_{Y^\prime}^{\frac{346\alpha^2-433\alpha-234}{14(27-13\alpha)} }  \delta^{\frac{118\alpha^2-121\alpha-189}{378-182\alpha}} (\delta^3|\tubes|)^{\alpha/3}.
\end{split}
\end{equation}
Note that if $3/4\leq\alpha\leq 1$, then
\begin{equation*}
\frac{346\alpha^2-433\alpha-234}{14(27-13\alpha)} \geq -\frac{321}{196} > -\frac{47}{28}.
\end{equation*}
Thus
\begin{equation}\label{secondMuYpEstimate}
\mu_{Y^\prime}\lessapprox C_1^{C/\eps_1} \delta^{-\eps/2}  \lambda_{Y^\prime}^{-\frac{47}{28}}\delta^{-\alpha^{\prime}}(\delta^3|\tubes|)^{\alpha/3}.
\end{equation}
The estimates \eqref{fistMuYpEstimate} and \eqref{secondMuYpEstimate} give us \eqref{pointwiseMultBoundWrongTubeDependence}.
\end{proof}

\begin{lem}\label{TwoEndsBoundCorrectTubeDependence}
Let $\alpha = \frac{1}{128}(257-\sqrt{17665})$, let $d_0 = \dimBoundSymbolic$, and let $d_1=\maxmlBoundSymbolic$. Then for each $\eps>0$, 
\begin{equation}\label{Ass1IsTrue}
\operatorname{Assertion} \mathbf{TE}\big(4-\alpha-\eps,\ \frac{75}{28},\ 1- \alpha/3\big)\quad\textrm{is true},
\end{equation} 
\begin{equation}\label{Ass2IsTrue}
\operatorname{Assertion}\mathbf{TE}\big(d_0-\eps,\ 13/4,\ 0\big)\quad\textrm{is true},
\end{equation} 
and 
\begin{equation}\label{Ass3IsTrue}
\operatorname{Assertion}\mathbf{TE}\big(d_1-\eps,\ d_1+\eps,\ 0\big)\quad\textrm{is true},
\end{equation} 
\end{lem}
\begin{proof}
We will begin with \eqref{Ass1IsTrue}. When $\alpha_1=1$, Assertion $\mathbf{TE}_\delta(4-\alpha_1,\frac{75}{28}, 1-\alpha_1/3)$ is implied by Assertion $\mathbf{TE}_\delta(3, 2, 2/3)$, which follows from Theorem \ref{wolffThm} and Proposition \ref{robustTransReduction}. For each $k\geq 1$ suppose that Assertion $\mathbf{TE}(4-\alpha_{k},\frac{75}{28}, 1-\alpha_k)$ is true, and define 
\begin{equation*}
\alpha_{k+1} = \frac{-118\alpha_k^2+121\alpha_k+189}{378-182\alpha_k}.
\end{equation*}
Since $\alpha_{k+1}\leq\alpha_k$, Lemma \ref{boundWithCorrectTubesDependence} implies that Assertion $\mathbf{TE}(4-\alpha_{k+1},\frac{75}{28}, 1-\alpha_{k+1}/3)$ is true.  Observe that $\alpha_k \searrow \alpha$. Thus for each $\eps>0$, there is an index $k$ so that $\alpha_k\leq \alpha+\eps$. We conclude that \eqref{Ass1IsTrue} holds.

For \eqref{Ass2IsTrue}, note that for each $\eps>0$, \eqref{Ass2IsTrue} follows from Lemma \ref{boundWithCorrectTubesDependence} and \eqref{Ass1IsTrue}. Finally, for \eqref{Ass3IsTrue}, we will take an appropriate geometric mean of the estimates
\begin{equation*}
\Big| \bigcup_{T\in\tubes}Y(T) \Big|\gtrsim \lambda_Y^{\frac{75}{28}}\delta^{\alpha+\eps},\quad \Big| \bigcup_{T\in\tubes}Y(T) \Big|\gtrsim \lambda_Y^{\frac{13}{4}}\delta^{4-d_0+\eps},
\end{equation*}
to obtain an estimate of the form
\begin{equation*}
\Big| \bigcup_{T\in\tubes}Y(T) \Big|\gtrsim \lambda_Y^{d_1}\delta^{4-d_1+\eps}
\end{equation*}
I.e.~we must solve
\[
\Big(4 - \frac{1}{128}\big(256-\sqrt{17665}\big)\Big)\theta + \Big(\dimBoundSymbolic\Big)(1-\theta) = \frac{75}{28}\theta+\frac{13}{4}(1-\theta),
\]
and with this value of $\theta$ we define $d_1$ to satisfy
\[
4-d_1 = (4-\alpha)\theta + d_0(1-\theta).\qedhere
\]
\end{proof}
We are now ready to prove Theorem \ref{maximalFunctionEstimateBound}.
\begin{proof}[Proof of Theorem \ref{maximalFunctionEstimateBound}]
We begin with the maximal function estimate \eqref{maxmlFnBd}. Let $\tubes$ be a set of $\delta$-tubes in $\RR^4$ that point in $\delta$-separated directions and let $\eps>0$. Without loss of generality we can assume that $|\tubes|\sim\delta^{-3}$, since if this assumption fails then we can add additional tubes while maintaining the requirement that the tubes in $\tubes$ point in $\delta$-separated directions. Doing this can only increase $\Big\Vert \sum_{T\in\tubes}\chi_T \Big\Vert_{d/(d-1)}$. 

For each tube $T\in\tubes,$ let $T^\prime$ be the union of all $\delta$-cubes that intersect $T$. In particular, we have $T\subset T^\prime$, and the function $\sum_{T\in\tubes}\chi_{T^\prime}$ is constant on each $\delta$-cube.

Since the function $\sum_{T\in\tubes}\chi_{T^\prime}$ must take integer values between $1$ and $|\tubes|\lesssim \delta^{-3}$, by dyadic pigeonholing we can select an integer $\mu$ and a set $X\subset\RR^4$ that is a union of $\delta$ cubes so that
\begin{equation*}
\sum_{T\in\tubes}\chi_{T^\prime}(x)\sim\mu\quad\textrm{for every}\ x\in X,
\end{equation*}
and
\begin{equation}\label{XmuSize}
|X|^{(d-1)/d}\mu\gtrapprox \Big\Vert\sum_{T\in\tubes}\chi_{T^\prime}\Big\Vert_{d/(d-1)}\geq \Big\Vert\sum_{T\in\tubes}\chi_{T}\Big\Vert_{d/(d-1)}.
\end{equation}
For each $T\in\tubes,$ define the set $Y(T) = T^\prime\cap X$; we have that $Y(T)$ is a union of $\delta$-cubes that intersect $T$, so it is a shading of $T$ in the sense of Definition \ref{shading}. By construction we have $\mu_Y \sim \mu$, and
\begin{equation*}
\sum_{T\in\tubes}|Y(T)|=\mu|X|.
\end{equation*}
In particular, this implies
\begin{equation}\label{lambdaYSim}
\lambda_Y = \frac{\mu|X|}{\delta^3|\tubes|}\sim \mu|X|.
\end{equation}

Inequality \eqref{Ass2IsTrue} from Lemma \ref{TwoEndsBoundCorrectTubeDependence} and Proposition \ref{twoEndsConditionThm} imply that
\begin{equation}\label{sizeXBd}
|X| = \Big|\bigcup_{T\in\tubes}Y(T)\Big|\geq c_{\eps}\lambda^d\delta^{4-d+\eps},\quad d = \maxmlBoundSymbolic,
\end{equation}
and thus
\begin{equation}\label{muBound}
\mu\leq c_{\eps}^{-1}\lambda^{1-d}\delta^{d-4-\eps}.
\end{equation}
Combining \eqref{XmuSize}, \eqref{lambdaYSim}, and \eqref{sizeXBd}, we have
\begin{equation*}
\begin{split}
\Big\Vert\sum_{T\in\tubes}\chi_{T}\Big\Vert_{d/(d-1)} & \lesssim |X|^{\frac{d-1}{d}}\mu\\
&= (|X|\mu)^{\frac{d-1}{d}}\mu^{\frac{1}{d}}\\
&  \lessapprox \lambda_Y^{\frac{d-1}{d}} \Big(c_{\eps}^{-1}\lambda^{1-d}\delta^{d-4-\eps}\Big)^{\frac{1}{d}}\\
& = c_{\eps}^{1/d}\delta^{1-4/d-\eps/d}.
\end{split}
\end{equation*}
This gives \eqref{Ass1IsTrue}.

For our Hausdorff dimension estimate, we argue as above, except we use Inequality \eqref{Ass3IsTrue} from Lemma \ref{TwoEndsBoundCorrectTubeDependence} to obtain the bound
\begin{equation}\label{sizeXBdWorseLambda}
|X| = \Big|\bigcup_{T\in\tubes}Y(T)\Big|\geq c_{\eps}\lambda^{13/4}\delta^{4-d+\eps},\quad d = \dimBoundSymbolic.\qedhere
\end{equation} 
\end{proof}

\appendix

\section{Is the planebrush estimate sharp?}
In this section we will informally explore the question of whether the planebrush estimate is sharp. In \cite{KLT}, \L{}aba, Tao, and the first author considered the Heisenberg group
\begin{equation*}
\mathbb{H}=\{(z_1,z_2,z_3)\in\CC^3\colon \operatorname{Im}(z_3) = \operatorname{Im}(z_1\bar z_2)\}.
\end{equation*}
The closure of $\mathbb{H}\cap B(0,1)$ is a compact subset of $\CC^3$ that has many of the properties of a $5/2$ dimensional counter-example to the Kakeya conjecture in $\RR^3$. In particular, it is a $5/2$ dimensional subset of $\CC^3$ (here we mean the dimension of the set is $5/2$ times the dimension of the underlying field $\CC$) that contains a two (complex) dimensional family of complex lines, and these lines satisfy a natural analogue of the Wolff axioms. Since Wolff's hairbrush arguments from \cite{W} apply equally well to the set $\mathbb{H}\cap B(0,1)$, we say that Wolff's hairbrush arguments cannot distinguish the Heisenberg group from a genuine Besicovitch set. Informally, we say that Wolff's hairbrush argument is sharp, since it cannot be improved without incorporating additional information about the configuration of lines (for example, the fact that the lines point in different directions or that the underlying field does not contain a half-dimensional subfield). 

It is an interesting open question whether there exists a field $F$ and a set $X\subset F^4$ of dimension $3+1/3$ (or perhaps cardinality $|F|^{3+1/3}$ if $F$ is finite) so that $X$ contains a 3 dimensional family of lines satisfying the Wolff axioms, with the property that for each point $x\in X$, the lines passing through $x$ are coplanar (i.e. they are all contained in a common plane). If such a set exists, it would suggest that the planebrush argument from Section \ref{planebrushSection} is ``sharp,'' in the same sense that Wolff's hairbrush argument is sharp.

We hypothesize that if such a set $X\subset F^4$ does exist, then it is likely of the following type. First, the field $F$ is a degree-three field extension of some smaller field $K$, and $X$ is a low-degree 10-dimensional subvariety of $K^{12}$. To date, however, the authors have been unsuccessful in either finding such a set $X\subset F$ or in showing that no such example of this type can exist.

\section{Some Remarks on the Kakeya problem in $\mathbb{R}^3$}\label{R3KakeyaSec}
In \cite{KZ}, the authors proved the following volume estimate for unions of tubes in $\mathbb{R}^3$.
\begin{thm}[\cite{KZ}, Theorem 1.2]\label{GZMainThm}
There exist positive constants $C$ (large) and $c>0,\ \eps_0>0$ (small) so that the following holds. Let $\delta>0,\ \delta\leq\lambda\leq 1$, and let $(\tubes,Y)$ be a set of $\delta$-tubes and their associated shading that satisfy the Wolff axioms. Suppose that $\sum_{T\in\tubes}|Y(T)|\geq\lambda$. Then
\begin{equation}\label{volumeBdEqn}
\Big|\bigcup_{T\in\tubes}Y(T)\Big|\geq c\lambda^C \delta^{1/2-\eps_0}.
\end{equation}
\end{thm}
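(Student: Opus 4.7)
The plan is to prove this $\RR^3$ volume estimate by adapting the framework developed in the body of the paper (two-ends / robust transversality reductions, together with a plany-versus-trilinear dichotomy) to three dimensions, and then iterating a bootstrap analogous to Lemma \ref{boundWithCorrectTubesDependence}.

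First, I would invoke the remarks following Proposition \ref{twoEndsConditionThm} and Proposition \ref{robustTransReduction} to observe that the two-ends reduction and the robust transversality reduction both preserve the Wolff axiom hypothesis. This lets me assume, without loss of generality, that $(\tubes,Y)$ is $(\eps_1,100)$-two-ends and $(\eps_2,100)$-robustly transverse, while still satisfying the Wolff axioms with a slightly worse constant. It suffices to establish an estimate of the form $\mu_Y \lessapprox \lambda^{-C} \delta^{-1/2-\eps_0}$ on the average multiplicity, since such a bound translates into \eqref{volumeBdEqn} via the identity $\mu_Y \Big|\bigcup_{T\in\tubes}Y(T)\Big|\sim\lambda_Y(\delta^2|\tubes|)$.

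Next, I would run an $\RR^3$ analogue of Lemma \ref{planyOrTrilinearLem}: there is a critical scale $\delta\leq\theta\leq 1$ and a refinement for which, at every cube $Q\in\mathcal{Q}(Y^\prime)$, either the tubes through $Q$ contain a positive proportion of $\theta$-transverse triples (the trilinear case) or they all make angle at most $\theta$ with a common line direction at $Q$ (the plany case). In the trilinear case, the three-dimensional trilinear Kakeya estimate of Guth, combined with the Wolff-axiom propagation results of Katz–Rogers \cite{KR} (or the weaker version in \cite{Z}), gives a pointwise bound of the form $|\tubes_{Y^\prime}(Q)|\lessapprox \lambda^{-A_1}\delta^{-1/2-\eps^\prime}(\delta^2|\tubes|)^{B_1}$. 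In the plany case, I would rescale the $\delta$-tubes inside each $\theta$-tube of a coarse $\theta$-cover, and apply a hairbrush argument at scale $\theta$: the $\theta$-tubes through a typical $\theta$-cube essentially lie along a single line direction, which is precisely the geometry that makes Wolff's argument sharp in $\RR^3$. Inductively assuming the desired bound at scale $\theta/\delta$ for the internal tubes (and at scale $\theta$ for the coarse tubes) produces a second multiplicity estimate valid in the plany case.

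I would then close the loop by interpolating the trilinear estimate against Wolff's $\RR^3$ hairbrush bound $\mu_Y\lessapprox\lambda^{-1}\delta^{-1/2}(\delta^2|\tubes|)^{1/2}$, following the same structure as Lemma \ref{boundWithCorrectTubesDependence}: each round of induction produces a new exponent $\alpha_{k+1}$ defined from $\alpha_k$ by a rational recursion, and the sequence converges to a fixed point strictly better than $1/2$. The resulting $\eps_0>0$ is the gap between $1/2$ and this fixed point, while the power $C$ of $\lambda$ arises from tracking the $\lambda$-dependence through the interpolation and the iteration.

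The principal obstacle is the plany case: in $\RR^3$ the plany scenario is exactly where the Heisenberg-type examples live, so one cannot hope to beat Wolff's $\delta^{1/2}$ there without leveraging some extra structural input. The role of the Wolff axiom hypothesis is precisely to exclude such examples quantitatively, and the subtle point is to ensure that after each rescaling the rescaled tubes still inherit a controlled form of the Wolff axioms. A secondary difficulty is bookkeeping: to make the recursion $\alpha_k\to\alpha_{k+1}$ contractive one has to be careful about the constants $C_1,C_2$ produced by the two-ends and robust transversality reductions, and to absorb them using the $C_1^{-C/\eps_1}$ type losses tolerated by the assertions $\mathbf{TE}$ and $\mathbf{RT}$.
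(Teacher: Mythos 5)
This theorem is not proved in the present paper; it is quoted verbatim from \cite{KZ}, a separate Katz--Zahl paper on the Kakeya problem in $\RR^3$. So there is no "paper's own proof" here, only the citation, and the question is whether your proposed argument would actually establish the result. It would not: there is a genuine gap in your treatment of the plany case, and you come close to acknowledging this yourself.

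Concretely, after you run a three-dimensional analogue of the plany-versus-trilinear dichotomy, the trilinear branch is indeed easy (Guth's trilinear Kakeya plus the Wolff-axiom propagation of Katz--Rogers/Zahl gives a strong multiplicity bound), but the plany branch is exactly where the $\delta^{1/2}$ exponent becomes sharp. Your suggestion to "apply a hairbrush argument at scale $\theta$" in that branch only reproduces Wolff's $5/2$ estimate; it cannot be iterated into anything better, because after rescaling inside a $\theta$-tube the rescaled internal tubes again look like a generic $\RR^3$ Kakeya configuration and you have made no progress. The planebrush argument of this paper is precisely a device for handling the plany case in $\RR^4$ --- by slicing $\RR^4$ into $3$-planes containing a common $2$-plane --- and it has no analogue in $\RR^3$: the analogous slicing in $\RR^3$ (by $2$-planes containing a common line) is Wolff's hairbrush, so the dichotomy-plus-interpolation framework collapses back to Wolff. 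Saying that the Wolff axioms "exclude such examples quantitatively" is a statement of the goal, not an argument. The actual proof in \cite{KZ} handles the plany case with substantially heavier machinery: a multi-scale graininess analysis, a structural theorem showing that a hypothetical sub-extremal Besicovitch set that is plany at many scales must carry an $SL_2$-type algebraic structure, and polynomial partitioning to exploit that structure. None of those ingredients appear in your outline, and they cannot be replaced by a simple interpolation of Wolff against a trilinear bound.

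Two smaller remarks. First, your description of the plany case in $\RR^3$ ("they all make angle at most $\theta$ with a common line direction") is off: in $\RR^3$ the quantity $|v_1\wedge v_2\wedge v_3|$ is a scalar, and it being small means the three directions are nearly \emph{coplanar}, not collinear; a small wedge of \emph{pairs} would give the linear case. This matters, since the coplanar geometry is exactly the one that supports Heisenberg-type examples and resists Wolff's argument. Second, your preliminary claim that the two-ends and robust transversality reductions preserve the Wolff axioms is reasonable and consistent with the remark after Proposition \ref{twoEndsConditionThm}, so that part of the setup is fine; the breakdown is entirely in how the plany branch is resolved.
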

Theorem \ref{GZMainThm} immediately implies that every Besicovitch set in $\mathbb{R}^3$ has Hausdorff dimension at least $5/2+\eps_0$. Theorem \ref{GZMainThm} does not immediately yield a maximal function estimate, because the exponent of $\lambda$ in \eqref{volumeBdEqn} is wrong---an exponent of $\lambda^{5/2+\eps_0}$ is required in order to obtain a maximal function estimate.

However, the same argument used in Section \ref{maxmlFnEstimateSection} can also be used to upgrade Theorem \ref{GZMainThm} to a maximal function estimate. Indeed, recall the following consequence of Wolff's hairbrush argument in $\mathbb{R}^3$:

\begin{thm}[Wolff Hairbrush Estimate]\label{wolffHairbrushEstimateR3}
There exists an absolute constant $C$ so that the following holds. Let $\delta>0,\ \delta\leq\lambda\leq 1$, $\eps_1>0$, and let $(\tubes,Y)$ be a set of $\delta$-tubes and their associated shading that satisfy the Wolff axioms. Suppose that $\sum_{T\in\tubes}|Y(T)|\geq\lambda$ and that $(\tubes,Y)$ is $(\eps_1,\lambda)$-two-ends. Then for each $\eps>0$, there exists a constant $c_\eps>0$ so that
\begin{equation}\label{volumeBdEqnWolff}
\Big|\bigcup_{T\in\tubes}Y(T)\Big|\geq c_\eps \lambda^{2} \delta^{1/2+\eps+C\eps_1}.
\end{equation}
\end{thm}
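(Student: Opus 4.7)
The plan is to follow the proof of Theorem \ref{wolffThm} essentially verbatim, adapted to ambient dimension $3$ and using the Wolff axioms in place of direction separation. First I would refine $(\tubes,Y)$ to a subcollection on which $|\tubes_Y(Q)|\sim\mu_Y$ for every $Q\in\mathcal{Q}(Y)$, losing only polylogarithmic factors. Cauchy--Schwarz applied to the hypothesis $\sum_T |Y(T)|\geq\lambda$ (which translates to $\lambda_Y\cdot\delta^{2}|\tubes|\gtrsim\lambda$) produces $\gtrsim\lambda_Y^{2}|\tubes|\delta^{-2}$ triples $(T,Q,Q')$ with $Q,Q'\subset Y(T)$. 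The $(\eps_1,\lambda)$-two-ends hypothesis then lets us restrict to triples with $\dist(Q,Q')$ larger than a threshold depending on $\lambda$ and $\eps_1$, while losing only a factor that can be absorbed into the eventual $\delta^{C\eps_1}$.

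For each such separated triple the Wolff axioms guarantee that at least $\gtrsim\mu_Y$ tubes $T'\in\tubes_Y(Q')$ make angle $\gtrsim 1$ with $T$: otherwise a constant fraction of tubes through $Q'$ would be nearly parallel to $T$ and hence would concentrate in the $\delta$-neighborhood of the plane through $T$ and $Q'$, violating the Wolff axioms. Pigeonholing on $T'=T_0$ produces a single tube $T_0$ participating in many transverse quadruples $(T,Q,T_0,Q')$. Their ``hairbrush'' lies inside the union of $\sim \delta^{-1}$ thickened planes $N_{2\delta}(\Pi_i)$, each containing the line $L_0$ coaxial with $T_0$; these slabs are boundedly overlapping outside any $t$-neighborhood of $L_0$ (with overlap $\lesssim 1+1/t$), and every tube intersecting $T_0$ transversely lies in exactly one of them.

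C\'ordoba's two-dimensional $L^2$ Kakeya estimate applied inside each slab gives $\bigl\|\sum_{T\in\tubes_i}\chi_{Y_i(T)}\bigr\|_2^{2}\lesssim |\log\delta|\,\delta^{2}|\tubes_i|$. After dyadically pigeonholing on the number of tubes per slab and on the shading density per tube, Cauchy--Schwarz inside each slab yields a slab volume lower bound of the form $(\sum|Y_i(T)|)^{2}/(|\log\delta|\,\delta^{2}|\tubes_i|)$. Summing over slabs and eliminating $\mu_Y$ via the identity $\mu_Y\bigl|\bigcup_T Y(T)\bigr|\sim \sum_T|Y(T)|\geq\lambda$, then squaring and rearranging, produces the claimed bound $\bigl|\bigcup_T Y(T)\bigr|\geq c_\eps\lambda^{2}\delta^{1/2+\eps+C\eps_1}$, with the polylogarithmic factors and the two-ends loss absorbed into $\delta^{\eps+C\eps_1}$.

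The main obstacle is not a new geometric idea but careful bookkeeping: one must verify that the Wolff axioms are strong enough to replace direction separation at both the transversality step (finding $\gtrsim\mu_Y$ transverse tubes through a typical $Q'$) and the bounded-overlap step for the slabs $N_{2\delta}(\Pi_i)$; both are classical consequences of the Wolff axioms but should be checked explicitly. One also must quantify the two-ends loss so that it manifests as $\delta^{C\eps_1}$ for a controlled absolute constant $C$, rather than as an uncontrolled power of $\lambda^{-1}$.
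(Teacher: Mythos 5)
Your outline is on the right track for the hairbrush computation itself, but there is a genuine gap at the transversality step. You claim that the Wolff axioms guarantee $\gtrsim\mu_Y$ tubes $T'\in\tubes_Y(Q')$ make angle $\gtrsim 1$ with $T$, because otherwise the near-parallel tubes ``would concentrate in the $\delta$-neighborhood of the plane through $T$ and $Q'$.'' That is not correct. Tubes through $Q'$ whose directions lie within a fixed small angle $c$ of $v(T)$ are confined to the $c$-neighborhood of a \emph{line} (a fat $c$-tube), not to a $\delta$-neighborhood of a plane. The Wolff axioms do bound the number of $\delta$-tubes contained in such a $c$-neighborhood, but that bound is polynomial in $c$, and for an absolute constant $c$ it is no better than the trivial bound on $|\tubes_Y(Q')|$; in particular nothing in the Wolff axioms forbids \emph{all} of $\tubes_Y(Q')$ from pointing into a single $c$-cap. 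This is not special to the Wolff-axiom setting: even for direction-separated tubes, pointwise transversality at a cube does not come for free. That is precisely why the paper introduces the robust transversality condition (Definition \ref{robustTransDef}) and the robust transversality reduction (Proposition \ref{robustTransReduction}).

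The missing ingredient is the $\RR^3$/Wolff-axiom analogue of Proposition \ref{robustTransReduction}. The paper's one-line proof (``essentially the same as the proof of Theorem \ref{wolffThm}'') is pointing at the full pipeline: Theorem \ref{wolffThm} establishes the hairbrush bound under both the two-ends \emph{and} robust transversality hypotheses (an $\mathbf{RT}$-type assertion), and Proposition \ref{robustTransReduction} then removes the robust transversality hypothesis. You should first run that reduction --- pigeonhole on the angular scale at which $\tubes_Y(Q)$ concentrates, restrict to tubes in a coarse angular cap, rescale, and check that the rescaled collection still satisfies the Wolff axioms (as the paper notes for the two-ends reduction in the remark following Proposition \ref{twoEndsConditionThm}) --- to obtain an $(\eps_2,O(1))$-robustly transverse subcollection. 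After that reduction, the remainder of your outline (Cauchy--Schwarz for triples, two-ends to separate $Q$ and $Q'$, robust transversality to extract $\gtrsim\mu_Y$ transverse tubes, pigeonholing on a hairbrush tube $T_0$, slab decomposition, C\'ordoba's $L^2$ estimate, and the bookkeeping that places the losses into $\delta^{\eps+C\eps_1}$) is a faithful translation of the proof of Theorem \ref{wolffThm} and does carry over to $\RR^3$.
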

The proof of Theorem \ref{wolffHairbrushEstimateR3} is essentially the same as the proof of Theorem \ref{wolffThm}. Observe that an analogue of \eqref{volumeBdEqnWolff} with the exponent $\lambda^{5/2}$ would be sufficient to establish a maximal function estimate in $\RR^3$ at dimension $5/2$. Thus we can interpolate \eqref{volumeBdEqnWolff} (which has a better than necessary $\lambda$ exponent) with \eqref{volumeBdEqn} (which has a worse than necessary $\lambda$ exponent) to obtain an improved maximal function estimate in $\mathbb{R}^3$:

\begin{thm}[Maximal function estimate in $\mathbb{R}^3$]
There exist absolute constants $C$ (large) and $\eps_0>0$ (small) so that the following holds. Let $\delta>0$ and let $\tubes$ be a set of tubes that satisfy the Wolff axioms. Then
\begin{equation}\label{maximalFunctionEstimate}
\Big\Vert \sum_{T\in\tubes}\chi_{T}\Big\Vert_d \leq C \delta^{1-3/d},\quad d = 5/2 + \eps_0. 
\end{equation}
\end{thm}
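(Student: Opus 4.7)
The strategy exactly parallels Section~\ref{maxmlFnEstimateSection}: first establish an $\RR^3$/Wolff analogue of Assertion $\mathbf{TE}(d,d,0)$ for some $d=5/2+\eps_0$ (i.e.\ a volume bound $|\bigcup Y(T)|\geq c\lambda^d\delta^{3-d+\eps}$ under a two-ends hypothesis), remove the two-ends hypothesis via the reduction noted in the remark after Proposition~\ref{twoEndsConditionThm}, and finally convert the resulting volume bound into \eqref{maximalFunctionEstimate} by the pigeonhole/superlevel-set argument used at the end of the proof of Theorem~\ref{maximalFunctionEstimateBound}.

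\textbf{Step 1 (interpolation).} Suppose $(\tubes,Y)$ satisfies the Wolff axioms, $\sum_{T\in\tubes}|Y(T)|\geq\lambda$, and is $(\eps_1,\lambda)$-two-ends. Theorem~\ref{GZMainThm} assumes no two-ends condition, so it applies and gives
\begin{equation*}
\Big|\bigcup_{T\in\tubes}Y(T)\Big|\geq c\lambda^C\delta^{1/2-\eps_0},
\end{equation*}
while Theorem~\ref{wolffHairbrushEstimateR3} gives
\begin{equation*}
\Big|\bigcup_{T\in\tubes}Y(T)\Big|\geq c_\eps\lambda^{2}\delta^{1/2+\eps+C\eps_1}.
\end{equation*}
Taking the geometric mean with weights $\alpha$ and $1-\alpha$ yields
\begin{equation*}
\Big|\bigcup_{T\in\tubes}Y(T)\Big|\geq c_\eps\lambda^{2+(C-2)\alpha}\,\delta^{1/2-\alpha\eps_0+(1-\alpha)(\eps+C\eps_1)}.
\end{equation*}
Choose $\alpha=(1/2+\eps_0')/(C-2)$ so that the exponent of $\lambda$ is exactly $5/2+\eps_0'$. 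Taking $\eps_0'$ sufficiently small compared to $\eps_0/(C-2)$, and $\eps_1$ sufficiently small compared to $\eps$, the $\delta$-exponent is at least $1/2-\eps_0'+\eps/2$. This establishes the $\RR^3$/Wolff analogue of Assertion $\mathbf{TE}(d,d,0)$ for $d=5/2+\eps_0'$.

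\textbf{Step 2 (removing two-ends and converting to maximal function).} By the remark following Proposition~\ref{twoEndsConditionThm}, the two-ends reduction applies to collections of tubes in $\RR^3$ satisfying the Wolff axioms. Applied to the bound from Step~1, it yields, for every $\eta>0$,
\begin{equation*}
\Big|\bigcup_{T\in\tubes}Y(T)\Big|\geq c_\eta\lambda^d\,\delta^{3-d+\eta},\qquad d=5/2+\eps_0',
\end{equation*}
for any $(\tubes,Y)$ satisfying the Wolff axioms. Given the original set of tubes in the statement, one then follows the proof of Theorem~\ref{maximalFunctionEstimateBound} verbatim (with $n=3$ throughout): pigeonhole a multiplicity level $\mu$ and a union of $\delta$-cubes $X$ on which $\sum_T\chi_{T'}\sim\mu$, set $Y(T)=T'\cap X$ so that $\lambda_Y\sim\mu|X|/(\delta^{2}|\tubes|)$, insert the volume bound above to get $\mu\lessapprox\lambda_Y^{1-d}\delta^{d-3-\eta}$, and combine with $\|\sum_T\chi_T\|\lesssim|X|^{(d-1)/d}\mu$ to reach \eqref{maximalFunctionEstimate} with $\eps_0$ any positive number strictly less than $\eps_0'$.

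\textbf{Main obstacle.} The only genuinely delicate point is the balancing in Step~1: we must ensure that the linear loss $\alpha\eps_0$ arising from interpolating with \eqref{volumeBdEqn} does not exceed the gain $3-d=1/2-\eps_0'$ we are trying to secure, and simultaneously that the secondary loss $(1-\alpha)C\eps_1$ coming from the hairbrush estimate is absorbed into the arbitrary $\eps$. This is a bounded, explicit calculation in one real variable ($\alpha$), and choosing $\eps_0'\ll\eps_0/(C-2)$ and $\eps_1\ll\eps/C$ suffices. All remaining steps are mechanical consequences of machinery already developed in the body of the paper.
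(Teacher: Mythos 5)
Your proof follows essentially the same route as the paper: interpolate the two-ends Wolff hairbrush bound \eqref{volumeBdEqnWolff} against the Katz-Zahl volume bound \eqref{volumeBdEqn} to obtain a volume lower bound of the form $c\lambda^{5/2+\eps_0'}\delta^{1/2-\eps_0'}$ under a two-ends hypothesis, then apply the two-ends reduction (valid for Wolff-axiom families, as the remark after Proposition~\ref{twoEndsConditionThm} notes) and the standard pigeonholing step from the end of the proof of Theorem~\ref{maximalFunctionEstimateBound}. The one thing worth flagging is a sign slip in your Step~1: you want the interpolated $\delta$-exponent to be \emph{at most} $1/2-\eps_0'+\eta$ (any small $\eta>0$) so that the resulting lower bound $\delta^{\beta}\gtrsim\delta^{1/2-\eps_0'+\eta}$ matches the $\mathbf{TE}$-assertion, not ``at least $1/2-\eps_0'+\eps/2$''; the inequality does go the right way once $\eps_0'\ll\eps_0/(C-2)$ and $\eps_1\ll\eps$, so this is a transcription error rather than a gap in the argument.
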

Note that the constant $\eps_0>0$ is smaller than the corresponding constant in Theorem \ref{GZMainThm}.
\begin{proof}
Using the standard two-ends reduction (see Proposition \ref{twoEndsConditionThm} and the accompanying remark; Proposition \ref{twoEndsConditionThm} is stated for tubes in $\RR^4$, but an analogous statement holds in any dimension), it suffices to prove the following. Let $\eps_1,\eps_2>0$ be sufficiently small. Then there exists $\eps_0>0$ so that the following holds. Let $(\tubes,Y)$ be a set of tubes and their associated shading. Suppose that the tubes satisfy the generalized Wolff axioms; $|\tubes|\geq \delta^{-2+\eps_1}$; $|Y(T)|\sim\lambda$ for each $T\in\tubes$; and $(\tubes,Y)$ is $(\eps_2,\delta^{-1}\lambda)$ two-ends. Then 
\begin{equation}\label{volumeLowerBoundR3}
\Big|\bigcup_{T\in\tubes}Y(T)\Big|\geq c\lambda^{5/2+\eps_0}\delta^{1/2-\eps_0}.
\end{equation}
The estimate \eqref{volumeLowerBoundR3} follows from averaging appropriate powers of \eqref{volumeBdEqn} and \eqref{volumeBdEqnWolff}.
\end{proof}

\bibliographystyle{abbrv}
\bibliography{R4_Grainy_Kakeya}

\end{document}